\numberwithin{equation}{section}
\pgfplotsset{compat=newest}
\newtheorem{theorem}{Theorem}[section]{\bfseries}{\it}
{\bfseries}{\it}
\newtheorem{lemma}[theorem]{Lemma}{\bfseries}{\it}
\newtheorem{corollary}[theorem]{Corollary}{\bfseries}{\it}
{\bfseries}{\it}
{\bfseries}{\it}
\theoremstyle{definition}
\newtheorem{remark}{Remark}[section]{\bfseries}{\rmfamily}
\renewcommand{\dim}{d}
\newcommand{\p}{\partial}
\newcommand{\abs}[1]{\lvert#1\rvert} 
\newcommand{\tends}{\rightarrow}
\newcommand{\norm}[1]{\lVert#1\rVert}
\newcommand{\N}{\mathbb{N}}
\newcommand{\cF}{\mathcal{F}}
\newcommand{\cT}{\mathcal{T}}
\newcommand{\eps}{\epsilon}
\newcommand{\Om}{\Omega}
\newcommand{\om}{\omega}
\newcommand{\sA}{\mathscr{A}}
\newcommand{\sB}{\mathscr{B}}
\newcommand{\avg}[1]{\left\{#1\right\}}
\newcommand{\jump}[1]{\llbracket #1 \rrbracket}
\DeclareMathOperator{\Tr}{Tr}
\DeclareMathOperator{\diam}{diam}
\DeclareMathOperator{\Div}{div}
\newcommand{\bn}{\bm{n}}
\newcommand{\Npw}{\nabla}
\newcommand{\Dpw}{\nabla^2}
\newcommand{\Pp}{\mathbb{P}_p}
\newcommand{\normt}[1]{\norm{#1}_{\cT}}
\newcommand{\hT}{h_{\cT}}
\newcommand{\Dp}{\Npw}
\newcommand{\bphi}{\bm{\phi}}
\newcommand{\Jpen}{J_{\cT}}
\newcommand{\Fg}{F_{\gamma}}
\newcommand{\ab}{{\alpha\beta}}
\newcommand{\R}{\mathbb{R}}
\renewcommand{\dim}{d}
\newcommand{\LL}{L_\lambda}
\newcommand{\shapereg}{\vartheta_\cT}
\newcommand{\absJ}[1]{\abs{#1}_{J,\cT}}
\newcommand{\Vt}{V_\cT}
\newcommand{\eT}{\eta_{\cT}}
\newcommand{\Dmut}{\Delta_{\cT}}
\newcommand{\St}{S_{\cT}}
\newcommand{\wt}{w_{\cT}}
\newcommand{\vt}{v_{\cT}}
\newcommand{\ut}{u_{\cT}}
\newcommand{\LLt}{L_{\lambda,\cT}}
\newcommand{\smin}{\sigma_{\mathrm{min}}}
\newcommand{\rhomin}{\rho_{\mathrm{min}}}
\newcommand{\bVt}{\bm{V}_{\cT}}
\newcommand{\bH}{\bm{H}_T^1(\Om)}
\newcommand{\bE}{\bm{E}_{\cT}}
\newcommand{\bCt}{\bm{C}_{\cT}}
\newcommand{\bwt}{\bm{w}_{\cT}}
\newcommand{\bvt}{\bm{v}_{\cT}}
\newcommand{\zt}{z_{\cT}}
\newcommand{\normbt}[1]{\norm{#1}_{\bVt}}
\newcommand{\absbJ}[1]{\abs{#1}_{\bm{J},\cT}}
\newcommand{\absL}[1]{\abs{#1}_{\lambda,\cT}}
\newcommand{\At}{A_{\cT}}
\newcommand{\Cdiam}{C_{\dim,\diam\Om}}
\newcommand{\Cmon}{C_{\mathrm{mon}}}
\newcommand{\CPF}{C_{\mathrm{PF}}}
\newcommand{\Hd}{H^2(\Om)\cap H^1_0(\Om)}
\title{Unified analysis of discontinuous Galerkin and $C^0$-interior penalty finite element methods for Hamilton--Jacobi--Bellman and Isaacs equations\footnotemark[1]}
\author{Ellya L.~Kawecki\footnotemark[2] and Iain Smears\footnotemark[2]}
\begin{document}

\maketitle

\renewcommand{\thefootnote}{\fnsymbol{footnote}}

\footnotetext[1]{This work was supported by the Engineering and Physical Sciences Research Council (EPSRC) Doctoral Prize Fellowship grant EP/R513143/1.}

\footnotetext[2]{Department of Mathematics, University College London, Gower
Street, WC1E 6BT London, United Kingdom (\texttt{e.kawecki@ucl.ac.uk}, \texttt{i.smears@ucl.ac.uk}).}

\begin{abstract}
We provide a unified analysis of \emph{a posteriori} and \emph{a priori} error bounds for a broad class of discontinuous Galerkin and $C^0$-IP finite element approximations of fully nonlinear second-order elliptic Hamilton--Jacobi--Bellman and Isaacs equations with Cordes coefficients. 
We prove the existence and uniqueness of strong solutions in $H^2$ of Isaacs equations with Cordes coefficients posed on bounded convex domains. 
We then show the reliability and efficiency of computable residual-based error estimators for piecewise polynomial approximations on simplicial meshes in two and three space dimensions. 
We introduce an abstract framework for the \emph{a priori} error analysis of a broad family of numerical methods and prove the quasi-optimality of discrete approximations under three key conditions of Lipschitz continuity, discrete consistency and strong monotonicity of the numerical method.
Under these conditions, we also prove convergence of the numerical approximations in the small-mesh limit for minimal regularity solutions.
We then show that the framework applies to a range of existing numerical methods from the literature, as well as some original variants. A key ingredient of our results is an original analysis of the stabilization terms.
As a corollary, we also obtain a generalization of the discrete Miranda--Talenti inequality to piecewise polynomial vector fields.
\end{abstract}

\section{Introduction}

We consider fully nonlinear second-order elliptic Isaacs equations with a homogeneous Dirichlet boundary condition of the form
\begin{equation}\label{eq:isaacs_pde}
\begin{aligned}
F[u]\coloneqq \inf_{\alpha\in\sA}\sup_{\beta\in\sB}\left[  L^{\ab} u-f^{\ab} \right] & = 0 &&\text{in }\Om,\\
u & = 0 &&\text{on }\partial\Om,
\end{aligned}
\end{equation}
where $\Om$ is a bounded convex polytopal open set in $\R^\dim$, $\dim\in\{2,3\}$ and where the second-order elliptic operators $L^{\ab}$ are defined in~\eqref{eq:Lab_def} below.
It is also possible to consider the case where the order of the infimum and supremum in~\eqref{eq:isaacs_pde} are reversed.
Isaacs equations of the form~\eqref{eq:isaacs_pde} arise in applications of two-player games of stochastic optimal control, and they can be seen as a generalization of Hamilton--Jacobi--Bellman (HJB) equations~\cite{FlemingSoner06}.
Isaacs and HJB equations and related stochastic control problems arise in many applications from engineering, energy, finance and computer science. Many other important nonlinear partial differential equations (PDE) can be reformulated as HJB or Isaacs equations, including the Monge--Amp\`ere equation which, along with its convexity constraint, can be reformulated as a fully nonlinear HJB equation as shown in~\cite{FengJensen17,Krylov87}; see also~\cite{Kawecki2018a} for some further results.
The equation in~\eqref{eq:isaacs_pde} is fully nonlinear in the sense that all partial derivatives are contained in the nonlinearity, which prohibits approaches based on weak solutions that are standard for divergence form problems.

The design and analysis of stable and accurate numerical methods for the approximation of the solution of fully nonlinear PDE such as~\eqref{eq:isaacs_pde} remains generally very challenging. 
One approach consists of designing methods that satisfy a discrete maximum principle, which can be shown to converge to the viscosity solution in the maximum norm under appropriate conditions of consistency, stability and the availability of a comparison principle for viscosity sub- and supersolutions~\cite{CrandallIshiiLions92,Souganidis91}. 
See~\cite{Caffarelli1995} and the references therein for the regularity theory of viscosity solutions.
Efforts in this direction have focused primarily on finite difference methods~\cite{DebrabantJakobsen13,FengJensen17,KuoTrudinger1990,KushnerDupuis01}, although there has been recent interest also in finite element methods (FEM) satisfying a maximum principle~\cite{Jensen17,JensenS13}, which additionally show stability and convergence of the derivatives in $L^2$.
See also~\cite{NochettoZhang18,SalgadoZhang19} for methods based on integral-operator approximations.
Methods based on discrete maximum principles have the advantage of being able to handle problems with possibly degenerate second-order terms and correspondingly low-regularity solutions. 
However, it is well-known that enforcing a discrete maximum principle is restrictive in practice, typically requiring highly structured grids or meshes and wide stencil approximations of the differential operators, and it also leads to limitations on the order of convergence~\cite{BonnansZidani03,CrandallLions96,Kocan95,MotzkinWasow53}.

There is therefore considerable interest in the analysis of methods that do not require a discrete maximum principle~\cite{Brenner2011,LakkisPryer11,LakkisPryer13,FengGlowinskiNeilan13,NeilanSalgadoZhang2017}, although a long-standing difficulty has been to design provably stable methods for a sufficiently broad range of problems.
This challenge was resolved in~\cite{SS13,SS14,SS16} in the context of nondivergence form elliptic equations and fully nonlinear HJB equations on convex domains that satisfy the Cordes condition. 
The Cordes condition is an algebraic assumption on the coefficients of the linear operators inside the nonlinear terms, which is thus naturally preserved under linearizations of the original fully nonlinear operator and also under discretization.
The motivation for the Cordes condition stems from the analysis of linear nondivergence form elliptic equations with discontinuous coefficients, which arise as linearizations of fully nonlinear HJB equations under policy iteration.
In particular, it is well-known that for linear nondivergence form elliptic equations in three space dimensions and above, the discontinuities in the diffusion coefficients generally lead to ill-posedness, even in the uniformly elliptic case with smooth data on a smooth domain, and for both strong and viscosity solutions with measurable ingredients~\cite{Safonov1999,MaugeriPalagachevSoftova00}.
Further assumptions on the coefficients (other than continuity) are therefore generally necessary to recover well-posedness.
For instance, there are available results on the well-posedness of strong solutions when the coefficients are of \emph{vanishing mean-oscillation}~\cite{ChiarenzaFrascaLongo93,MaugeriPalagachevSoftova00}; however in practice, the discontinuous coefficients obtained in the linearized problems mentioned above typically feature jump discontinuities and are not of vanishing mean-oscillation.
The case of general $L^\infty$ coefficients thus falls outside the scope of the Calder\'on--Zygmund and Schauder theories~\cite{GilbargTrudinger2001}.
In particular, it can be shown that well-posedness is recovered for strong solutions on convex domains under the Cordes condition~\cite{Cordes1956}, see also~\cite{MaugeriPalagachevSoftova00} for a comprehensive discussion. 
In two space dimensions however, uniform ellipticity implies the Cordes condition.
It was then shown in~\cite{SS13,SS14} that the Cordes condition also implies existence and uniqueness of strong solutions in $H^2$ for fully nonlinear second-order elliptic HJB equations on convex domains, where an $hp$-version discontinuous Galerkin (DG) finite element method was proposed with proven stability and with optimal convergence rates with respect to the mesh-size, and half-order suboptimal rates with respect to the polynomial degree, in $H^2$-type norms. 
It was also shown in \cite{SS14} that policy iteration, understood as a semismooth Newton method, has local superlinear convergence.
These results were extended to parabolic problems in~\cite{SS16}. 
There has since been significant recent activity centred on this approach, including preconditioners~\cite{S18}, adaptive $H^2$-conforming and mixed methods in~\cite{Gallistl17,Gallistl19}, extensions to curved domains~\cite{Kawecki19b}, boundary conditions involving oblique derivatives~\cite{Gallistl19b,Kawecki19}, and $C^0$ interior penalty (IP) methods~\cite{Bleschmidt19,Kawecki19c,NeilanWu19}. 

In this work, we present a unified \emph{a priori} and \emph{a posteriori} analysis of DG and $C^0$-IP methods for Isaacs equations~\eqref{eq:isaacs_pde} with Cordes coefficients.
First, we extend the well-posedness result of~\cite{SS14} for fully nonlinear HJB equations to the setting of Isaacs equations, showing existence and uniqueness of a strong solution of~\eqref{eq:isaacs_pde} in $H^2(\Om)\cap H^1_0(\Om)$.
This is the subject of Section~\ref{sec:pde}. Our second main contribution is a proof of \emph{reliability} and \emph{local efficiency} of residual-based error estimators in $H^2$-norms for piecewise polynomial approximations on simplicial meshes, which consist of unweighted volume residuals with appropriately penalized jumps of function gradients and jumps of function values.
This extends earlier results for $H^2$-conforming and $C^0$-IP methods from~\cite{
Bleschmidt19,Kawecki19c,Gallistl17}.
In fact, owing to the strong solution of the PDE, we show that the \emph{a posteriori} error analysis is determined primarily by the choice of approximation space and is otherwise independent of the numerical method, so that our a posteriori error bounds applies to \emph{any} piecewise polynomial function over the mesh.
This situation thus differs significantly from residual-based error estimates for divergence form elliptic problems, where the reliability bound is typically only satisfied under a suitable form of Galerkin orthogonality for the numerical solution~\cite{Verfurth13,CarstensenGudiJensen09}.
The above observation implies that our \emph{a posteriori} error analysis applies to any numerical method employing piecewise polynomial approximations on simplicial meshes.

Our further main contributions concern the \emph{a priori} error analysis of DG and $C^0$-IP methods for Isaacs equations.
We provide a framework for proving quasi-optimality, also called near-best approximation, of the error attained by the numerical solution under only the minimum guaranteed regularity of the solution in $H^2(\Om)\cap H^1_0(\Om)$. 
The key requirements on the numerical method of the framework are Lipschitz continuity, strong monotonicity and an appropriate notion of consistency.
Therefore, this generalizes C\'ea's Lemma to the problem at hand, which, interestingly for nonconforming methods, holds here without additional terms related to data oscillation~\cite{Gudi2010}.
We then prove convergence of the numerical approximations in the small-mesh limit for sequences of shape-regular meshes, without any additional regularity assumptions.
We then show how our framework applies to a broad family of DG and $C^0$-IP methods which include as special cases the methods of~\cite{SS13,SS14} (restricted here to simplicial meshes and fixed polynomial degrees), the method of~\cite{NeilanWu19}, as well as some original variants that are of further interest in the context of adaptive methods~\cite{KaweckiSmears20adapt}.
Thus, up to the constants involved, all of these methods are quasi-optimal and converge in the minimal regularity setting.
We note from the onset that we consider here a homogeneous boundary condition for simplicity, and that nonhomogeneous boundary data can be also be handled with minor adjustments, see e.g.\ \cite[Section~6.2]{SS13} for some further discussion.

These results are original even in the setting of HJB equations, and our current approach to the \emph{a priori} error analysis differs significantly from the earlier approach of~\cite{SS13,SS14}. 
Indeed, in \cite{SS13,SS14} the analysis employs a notion of consistency that involves the insertion of the exact solution of the problem into the discrete forms, which leads to additional regularity assumptions on the exact solution in order to handle terms involving traces of second derivatives on mesh faces, see e.g.~\cite[Corollary~6]{SS13}.
In this work, we propose and show a different notion of consistency, that is determined entirely at the discrete level (thus called here~\emph{discrete consistency}) and thus does not involve additional assumptions on the exact solution.
The key to showing that the methods satisfy the discrete consistency condition is an original sharp analysis of the kernel of the stabilization terms that were first introduced in~\cite{SS13}, see in particular Theorem~\ref{thm:stab_bound} below.
Note that methods using the original stabilization terms of~\cite{SS13,SS14} remain competitive in practice owing to the fact that they lead to penalization parameters that are robust with respect to domain geometry, and they have further advantages in terms of flexibility, since they can accommodate extensions to $hp$-version, meshes with hanging nodes, non-simplicial elements, etc. 
We also show here that the discrete Miranda--Talenti inequality of~\cite{NeilanWu19} can be seen as a special case of a more general result for piecewise polynomial discontinuous vector fields.

This paper is organized as follows. First, we prove the well-posedness of~\eqref{eq:isaacs_pde} on convex domains under the Cordes condition in Section~\ref{sec:pde}. Then, after defining the notation in Section~\ref{sec:notation}, we present the general \emph{a posteriori} and \emph{a priori} error analysis in Section~\ref{sec:framework}. In section~\ref{sec:num_schemes} we present the family of numerical methods, and present our main results that verify the abstract assumptions of the framework. The proofs, including the analysis of the stabilization terms and discrete Miranda--Talenti inequalities, are then given in~Section~\ref{sec:proofs}.

\section{Analysis of well-posedness of the problem}\label{sec:pde}

Let $\Om\subset\mathbb \R^\dim$, $\dim\in\{2,3\}$, be a bounded convex polytopal open set.
The assumption $\dim\in\{2,3\}$ is primarily technical and is related to some $H^2$-enrichment operators that appear later in this work in Section~\ref{sec:abtract_apost}. We therefore note that the results of this section are not restricted to $\dim\in \{2,3\}$ and in fact hold for general dimensions.
Let $\sA,\sB$ be compact metric spaces, and let the $\R_{\mathrm{sym}}^{\dim\times\dim}$ matrix-valued function $a$, the $\R^\dim$ vector-valued function $b$, and the real-valued functions $c$ and $f$ be continuous on $\overline{\Om}\times\sA\times\sB$, where $\R_{\mathrm{sym}}^{\dim\times\dim}$ denotes the space of symmetric $\dim\times\dim$ matrices.
For each $(\alpha,\beta)\in\sA\times\sB$ we define $a^\ab\colon x \mapsto a(x,\alpha,\beta)$ for all $x\in\overline{\Om}$. 
The functions $b^{\ab}$, $c^{\ab}$ and $f^{\ab}$ are defined in a similar manner for each $(\alpha,\beta)\in\sA\times\sB$.
It is assumed that $c^\ab$ is nonnegative in $\Om$ for all $(\alpha,\beta)\in\sA\times \sB$, and that the diffusion coefficients $a^\ab$ are uniformly elliptic, uniformly over $\sA\times\sB$, i.e.\ there exist positive constants $\underline{\nu}$ and $\overline{\nu}$ such that
\begin{equation}\label{eq:ellipticity}
\begin{aligned}
\underline{\nu}|\xi|^2\le\xi^\top a^{\ab}(x)\xi\leq \overline{\nu} |\xi|^2 &&&\forall x\in\Om,\,\forall\xi\in\mathbb R^d,\forall(\alpha,\beta)\in\sA\times\sB,
\end{aligned}
\end{equation}
where $\abs{\xi}$ denotes the Euclidean norm of the vector $\xi\in\R^\dim$.

If the functions $b$ and $c$ both vanish identically on $\overline{\Om}\times\sA\times\sB$, i.e.\ $b\equiv 0$ and $c\equiv 0$, then we assume the Cordes condition: there exists a $\nu\in(0,1]$ such that
\begin{equation}\label{eq:Cordes2}
\begin{aligned}
\frac{\abs{a^{\ab}}^2}{\Tr(a^{\ab})^2}\le\frac{1}{d-1+\nu}&&&\text{in }\Om\quad\forall(\alpha,\beta)\in\sA\times\sB,
\end{aligned}
\end{equation}
where $\abs{a^{\ab}}$ denotes the Frobenius norm of the matrix $a^\ab$.
Otherwise, in the case of nonvanishing lower-order terms, i.e.\ $b\not\equiv 0$ or $c\not\equiv 0$, we assume that there exists a $\lambda>0$ and a $\nu\in(0,1]$ such that
 \begin{equation}\label{eq:Cordes}
\begin{aligned}
\frac{\abs{a^{\ab}}^2+\abs{b^{\ab}}^2/2\lambda+(c^{\ab}/\lambda)^2}{(\Tr(a^{\ab})+c^{\ab}/\lambda)^2}\le\frac{1}{d+\nu}&&&\text{in }\Om\quad\forall(\alpha,\beta)\in\sA\times\sB,
\end{aligned}
\end{equation}
where $\abs{b^{\ab}}$ denotes the Euclidean norm of $b^\ab$.
As explained in~\cite{SS14} the parameter $\lambda$ serves to make the Cordes condition invariant under isotropic affine mappings of the domain. If $b$ and $c$ vanish identically, we let $\lambda =0$.

\begin{remark}
It is well-known that if $\dim=2$, then the uniform ellipticity condition~\eqref{eq:ellipticity} implies the Cordes condition~\eqref{eq:Cordes2}, and that $\nu$ can be bounded from below in terms of $\underline{\nu}$ and $\overline{\nu}$ alone, see for instance~\cite[Example~2]{SS14}.
\end{remark}

For each $(\alpha,\beta)\in\sA\times\sB$, the bounded linear operator $L^{\ab}\colon H^2(\Om) \tends L^2(\Om)$ is defined by 
\begin{equation}\label{eq:Lab_def}
\begin{aligned}
L^{\ab} v \coloneqq a^{\ab}{:}\nabla^2 v + b^{\ab}{\cdot}\nabla v - c^{\ab} v &&&\forall v\in H^2(\Om),
\end{aligned}
\end{equation}
where $\nabla^2 v$ denotes the Hessian of $v$, and where $A{:}B\coloneqq\sum_{i,j}^{\dim}A_{ij}B_{ij}$ denotes the Frobenius inner-product of matrices. 
The compactness of $\overline{\Om}\times\sA\times\sB$ and the continuity of the coefficients $a$, $b$, $c$ and $f$ imply that the fully nonlinear differential operator 
\begin{equation}\label{eq:F_def}
\begin{aligned}
 F[v]\coloneqq \inf_{\alpha\in\sA}\sup_{\beta\in\sB}\left[ L^{\ab} v - f^{\ab} \right] &&& \forall v \in H^2(\Om),
\end{aligned}
\end{equation}
is well-defined as a mapping from $H^2(\Om)$ to $L^2(\Om)$.
In~\cite{SS14,SS16} it was shown that fully nonlinear HJB equations can be reformulated in terms of a renormalized nonlinear operator. We show here that this approach extends to Isaacs equations.
For each $(\alpha,\beta)\in\sA\times\sB$, we consider the renormalization function $\gamma^{\ab}\in C(\overline{\Om})$ defined by $\gamma^{\ab}\coloneqq \frac{\Tr a^{\ab} }{\abs{a^{\ab}}^2}$ if the coefficients $b$ and $c$ vanish identically, or otherwise by
\begin{equation}
\gamma^{\ab}\coloneqq \frac{\Tr a^{\ab}+c^{\ab}/\lambda }{\abs{a^{\ab}}^2+\abs{b^{\ab}}^2/2\lambda+\abs{c^{\ab}}^2/\lambda^2}.
\end{equation}
In all cases, note that the continuity of the coefficients, the uniform ellipticity condition~\eqref{eq:ellipticity} and the nonnegativity of $c^{\ab}$ imply that there exists a uniform positive upper and lower bounds $\gamma^*$ and $\gamma_*>0$ such that $\gamma^*\geq \gamma^\ab\geq \gamma_*$ in $\overline{\Om}$ for all $(\alpha,\beta)\in\sA\times\sB$.
Let the renormalized operator~$\Fg \colon H^2(\Om)\tends L^2(\Om)$ be defined by
\begin{equation}\label{eq:Fg_def}
\begin{aligned}
\Fg[u]\coloneqq \inf_{\alpha\in\sA}\sup_{\beta\in\sB}\left[\gamma^{\ab}\left(L^\ab v - f^\ab\right)\right] &&&\forall v \in H^2(\Om).
\end{aligned}
\end{equation}
The following Lemma shows that the equations $F[u]=0$ and $\Fg[u] =0$ have equivalent respective sets of sub- and supersolutions.
\begin{lemma}\label{lem:subsupersolutions}
A function $v\in H^2(\Om)$ satisfies $F[v]\leq 0$ pointwise a.e.\ in $\Omega$ if and only if $\Fg[v]\leq 0$ pointwise a.e.\ in~$\Omega$. Furthermore, a function $v\in H^2(\Om)$ satisfies $F[v]\geq 0$ pointwise a.e.\ in $\Omega$ if and only if $\Fg[v]\geq 0$ pointwise a.e.\ in~$\Omega$.
\end{lemma}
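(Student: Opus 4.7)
The plan is to reduce both equivalences in the lemma to a pointwise sign argument made possible by the fact that $\gamma^{\ab}$ is uniformly strictly positive. As already noted in the paragraph preceding the lemma, continuity of the coefficients, uniform ellipticity~\eqref{eq:ellipticity}, and nonnegativity of $c^{\ab}$ give $\gamma_* \leq \gamma^{\ab}(x) \leq \gamma^*$ for all $x \in \overline{\Om}$ and $(\alpha,\beta) \in \sA \times \sB$ with $\gamma_* > 0$. I would fix $x \in \Om$ at which the Hessian $\nabla^2 v(x)$ exists in the classical sense (which is almost every $x$, since $v \in H^2(\Om)$), and set
\[
\phi^{\ab}(x) := L^{\ab} v(x) - f^{\ab}(x), \qquad \psi^{\ab}(x) := \gamma^{\ab}(x)\, \phi^{\ab}(x).
\]
Since $a$, $b$, $c$ and $f$ are continuous on the compact product $\overline{\Om} \times \sA \times \sB$, the maps $(\alpha,\beta) \mapsto \phi^{\ab}(x)$ and $(\alpha,\beta) \mapsto \psi^{\ab}(x)$ are continuous on $\sA \times \sB$, so the supremum in $\beta$ and the subsequent infimum in $\alpha$ defining $F[v](x)$ and $\Fg[v](x)$ are both attained.

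For the subsolution equivalence, attainment of the outer infimum converts $F[v](x) \leq 0$ into the existence of some $\alpha_0 \in \sA$ (depending on $x$) such that $\phi^{\alpha_0\beta}(x) \leq 0$ for every $\beta \in \sB$. Multiplying by the strictly positive scalar $\gamma^{\alpha_0\beta}(x)$ preserves the inequality, giving $\psi^{\alpha_0\beta}(x) \leq 0$ for every $\beta$, whence $\Fg[v](x) \leq 0$. The converse direction is identical, since $1/\gamma^{\ab}(x)$ is also strictly positive and bounded. For the supersolution equivalence, I would use the dual characterization: $F[v](x) \geq 0$ holds iff for every $\alpha \in \sA$ there exists a $\beta_\alpha \in \sB$ (whose existence again follows from compactness and continuity of $\phi^{\ab}(x)$ in $\beta$) with $\phi^{\alpha\beta_\alpha}(x) \geq 0$, and sign-preservation under multiplication by $\gamma^{\alpha\beta_\alpha}(x) > 0$ transfers this condition between $\phi$ and $\psi$. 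Taking the union of the exceptional null sets across the four implications yields the pointwise a.e.\ equivalence asserted by the lemma.

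I do not anticipate any substantial obstacle here; the only subtlety worth flagging is the need for the inner supremum and outer infimum to be actually attained, since otherwise turning $\inf_\alpha \sup_\beta \phi^{\ab}(x) \leq 0$ into a pointwise existential statement in $\alpha$ would require passing to a minimizing sequence $\{\alpha_n\}$ and controlling the estimate uniformly in $\beta$. Compactness of $\sA \times \sB$ together with joint continuity of $(\alpha,\beta) \mapsto \phi^{\ab}(x)$ removes this issue, so the whole argument is essentially elementary and relies only on the sign and boundedness of the renormalization factor.
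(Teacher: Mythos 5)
Your proposal is correct and follows essentially the same route as the paper's proof: a pointwise argument at a.e.\ $x$ using the strict positivity of $\gamma^{\ab}$ to preserve signs, with compactness of $\sA\times\sB$ and continuity of the data guaranteeing attainment of the relevant supremum and infimum. The paper merely organises the same argument through the intermediate operators $G^{\alpha}$ and $G^{\alpha}_{\gamma}$, so there is no substantive difference.
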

\begin{proof}
The proof is a straightforward extension of the arguments in the proof of \cite[Theorem~3]{SS14}, and is primarily a consequence of the strict positivity of the renormalization function $\gamma^{\ab}$.
For each $\alpha\in\sA$, define the operators $G^{\alpha}[v]\coloneqq \sup_{\beta\in\sB}\left[L^\ab v- f^\ab \right]$ and $G_{\gamma}^{\alpha}[v]\coloneqq \sup_{\beta\in\sB}\left[\gamma^{\ab}(L^\ab v- f^\ab)\right]$ for each $v\in H^2(\Om)$.
We start by showing the equivalence of the sets of supersolutions.
Suppose that $v\in H^2(\Om)$; then $F[v]\geq 0$ a.e.\ in $\Om$ if and only if $G^\alpha[v]\geq 0$ a.e.\ in $\Om$ for every $\alpha\in \sA$.
Then, for any $\alpha\in\sA$, owing to compactness of $\sB$ and the continuity of the data, at almost every point $x\in\Omega$, the supremum in $G^{\alpha}[v](x)$ is attained by some $\beta^*\in\sB$, which gives $(L^{\alpha\beta^*}v-f^{\alpha\beta^*})(x)\geq 0$, which implies $G_\gamma^{\alpha}[v](x)\geq \gamma^{\alpha\beta^*}(L^{\alpha\beta^*}v-f^{\alpha\beta^*})(x)\geq 0$ using (strict) positivity of $\gamma^{\alpha\beta}$.
Considering also the converse situation, we then deduce that $G^\alpha[v]\geq 0$ a.e.\ in $\Omega$ is equivalent to $G_\gamma^{\alpha}[v]\geq 0$ a.e.\ in $\Omega$ for any $\alpha\in\sA$. Since $\alpha$ is arbitrary, we find that $F[v]\geq 0$ a.e.\ in $\Omega$ if and only if $\Fg[v]\geq 0$ a.e.\ in $\Omega$. We now consider the sets of subsolutions.
A function $v\in H^2(\Om)$ satisfies $F[v]\leq 0$ a.e. in $\Om$ if and only if, for a.e.\ $x\in\Om$, there exists an $\alpha_*\in \sA$ such that $G^{\alpha_*}[v](x) \leq 0$, which is equivalent to $(L^{\alpha_* \beta}v-f^{\alpha_*\beta})(x) \leq 0 $ for all $\beta\in\sB$, which is equivalent to $\gamma^{\alpha_*\beta}(x)(L^{\alpha_* \beta}v-f^{\alpha_*\beta})(x) \leq 0$ for all $\beta\in\sB$ by strict positivity of $\gamma^{\alpha_*\beta}$, which is finally equivalent to $G^{\alpha_*}_{\gamma}[v](x)\leq 0$. This shows that $G^{\alpha_*}[v] \leq 0$ a.e.\ in $\Om$ if and only if $G^{\alpha_*}_{\gamma}[v]\leq 0 $ a.e.\ in $\Om$, and thus the equivalence of $F[v]\leq 0$ a.e.\ in $\Om$ if and only if $\Fg[v]\leq 0$ a.e.\ in $\Om$, thereby completing the proof.
\end{proof}

A particular consequence of Lemma~\ref{lem:subsupersolutions} is that a solution of $F[u]=0$ is equivalently a solution of $\Fg[u]=0$.

\begin{remark}[Equivalence of problems in the sense of viscosity solutions]
The proof of Lemma~\ref{lem:subsupersolutions} involves only manipulations of pointwise values of the nonlinear operators $F$ and $\Fg$. Therefore, the $H^2$-regularity assumption on the sets of sub- and supersolutions in Lemma~\ref{lem:subsupersolutions} is not essential. In particular, recalling the notions of viscosity sub- and supersolutions~\cite{CrandallIshiiLions92}, it is easy to see that the argument above imply the equivalence of the sets of viscosity sub- and supersolutions (and hence also viscosity solutions) for the equations $F[u]=0$ and $\Fg[u]=0$.
\end{remark}

Let the differential operator $\LL\colon H^2(\Om)\tends L^2(\Om)$ be defined by
\begin{equation}
\begin{aligned}
\LL v \coloneqq \Delta v - \lambda v &&& \forall v \in H^2(\Om).
\end{aligned}
\end{equation}
We now show some bounds for the operator $\Fg$, including a Lipschitz continuity bound with a constant independent of the data. Since the properties are pointwise, we extend the definition of the operators $\Fg$ and $L_\lambda$ from the space $H^2(\Om)$ to $H^2(\om)$ for arbitrary open subsets $\om\subset \Om$ in order to simplify later applications of this result.

\begin{lemma}\label{lem:cordes_ineq}
For any open set $\om\subseteq\Om$, and for any $u,v\in H^2(\om)$, writing $w\coloneqq u-v$, the following inequalities hold pointwise a.e. in $\om$
\begin{subequations}\label{eq:cordes_ineq}
\begin{align}
\abs{F_\gamma[u] - F_\gamma[v] - \LL (u-v) }&\leq \sqrt{1-\nu}\sqrt{|\nabla^2 w|^2+2\lambda|\nabla w|^2+\lambda^2|w|^2},\label{eq:cordes_ineq1}\\
|F_\gamma[u]-F_\gamma[v]|&\leq \big(1+\sqrt{d+1}\big)\sqrt{|\nabla^2w|^2+2\lambda|\nabla w|^2+\lambda^2|w|^2}.\label{eq:cordes_ineq2}
\end{align}
\end{subequations}
\end{lemma}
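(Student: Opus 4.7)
The plan is to reduce both pointwise inequalities to a single Cauchy--Schwarz estimate following the HJB strategy of~\cite{SS14}. At a fixed point of $\om$, I would introduce the ``coefficient vector'' $X^{\ab}\coloneqq (a^{\ab}, b^{\ab}/\sqrt{2\lambda}, -c^{\ab}/\lambda)$ and the ``reference vector'' $I\coloneqq(I_{\dim\times\dim}, 0, -1)$ in $\R^{\dim\times\dim}\times\R^\dim\times\R$ equipped with the natural inner product, so that $|X^{\ab}|^2$ is the numerator and $X^{\ab}\cdot I = \Tr(a^{\ab})+c^{\ab}/\lambda$ the square root of the denominator of the Cordes quotient~\eqref{eq:Cordes}, while $|I|^2 = \dim+1$. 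By definition of $\gamma^{\ab}$ one has $\gamma^{\ab} = (X^{\ab}\cdot I)/|X^{\ab}|^2$. Writing $w\coloneqq u-v$ and $Y\coloneqq (\nabla^2 w, \sqrt{2\lambda}\,\nabla w, \lambda w)$, the identities $L^{\ab} w = X^{\ab}\cdot Y$, $\LL w = I\cdot Y$, and $|Y|^2 = |\nabla^2 w|^2+2\lambda|\nabla w|^2+\lambda^2|w|^2$ hold pointwise a.e.\ in $\om$.

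The core algebraic step is then to expand the square
\begin{equation*}
|\gamma^{\ab} X^{\ab} - I|^2 = |I|^2 - \frac{(X^{\ab}\cdot I)^2}{|X^{\ab}|^2},
\end{equation*}
which identifies $\gamma^{\ab} X^{\ab}$ as the orthogonal projection of $I$ onto the line spanned by $X^{\ab}$, and invoke the Cordes condition~\eqref{eq:Cordes} to bound the right-hand side by $(\dim+1)-(\dim+\nu) = 1-\nu$. Cauchy--Schwarz then yields the fixed-$(\alpha,\beta)$ pointwise scalar inequality
\begin{equation*}
|\gamma^{\ab} L^{\ab} w - \LL w| = |(\gamma^{\ab} X^{\ab}-I)\cdot Y| \leq \sqrt{1-\nu}\,|Y|.
\end{equation*}

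To promote this to a statement about $F_\gamma$, I would use the elementary inf-sup estimate $|\inf_\alpha\sup_\beta A^{\ab} - \inf_\alpha\sup_\beta B^{\ab}| \leq \sup_{\ab}|A^{\ab}-B^{\ab}|$, together with the observations that $f^{\ab}$ cancels in the difference $F_\gamma[u]-F_\gamma[v]$ and that $\LL w$ is independent of $(\alpha,\beta)$ and so passes freely through the inf-sup. This gives
\begin{equation*}
|F_\gamma[u]-F_\gamma[v]-\LL w| \leq \sup_{\ab}|\gamma^{\ab} L^{\ab} w - \LL w| \leq \sqrt{1-\nu}\,|Y|,
\end{equation*}
which is~\eqref{eq:cordes_ineq1}. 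Inequality~\eqref{eq:cordes_ineq2} then follows from the triangle inequality together with $|\LL w| = |I\cdot Y|\leq\sqrt{\dim+1}\,|Y|$ and $\sqrt{1-\nu}\leq 1$. The degenerate case $b\equiv 0$, $c\equiv 0$, $\lambda=0$ is handled identically, with the last two entries of $X^{\ab}$ and $I$ dropped and~\eqref{eq:Cordes2} replacing~\eqref{eq:Cordes} (so $|I|^2=\dim$). I do not expect any serious obstacle: the whole argument rests on the geometric identity for $|\gamma^{\ab} X^{\ab}-I|^2$, which is precisely where the Cordes condition enters; the only point requiring a little care is ensuring that the inf-sup difference passes through without losing constants, which is the standard lemma above.
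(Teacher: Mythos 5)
Your proposal is correct and follows essentially the same route as the paper: the paper's proof consists of the same elementary inf--sup estimate to reduce to a fixed pair $(\alpha,\beta)$, then invokes \cite[Lemma~1]{SS14} for the pointwise Cordes/Cauchy--Schwarz estimate --- which is precisely the projection computation for $\abs{\gamma^{\ab}X^{\ab}-I}^2$ that you write out in detail --- and obtains \eqref{eq:cordes_ineq2} from \eqref{eq:cordes_ineq1} by the same triangle-inequality argument with $\abs{\LL w}\leq\sqrt{d+1}\,\abs{Y}$. Your write-up simply makes explicit the cited argument, including the degenerate case $\lambda=0$, and is sound.
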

\begin{proof}
For arbitrary bounded sets of real numbers $\{X^{\alpha\beta}\}_{(\alpha,\beta)\in\sA\times\sB}$ and $\{Y^{\alpha\beta}\}_{(\alpha,\beta)\in\sA\times\sB}$, it is easy to see that
$$
\left\lvert\inf_{\alpha\in\sA}\sup_{\beta\in\sB} X^{\alpha\beta}-\inf_{\alpha\in\sA}\sup_{\beta\in\sB} Y^{\alpha\beta} \right\rvert\leq \sup_{(\alpha,\beta)\in\sA\times\sB} \abs{X^{\alpha\beta}-Y^{\alpha\beta}}.
$$
The proof of~\eqref{eq:cordes_ineq1} then follows the same arguments as in~\cite[Lemma~1]{SS14}. The inequality~\eqref{eq:cordes_ineq2} is then obtained from~\eqref{eq:cordes_ineq1} by adding and subtracting $\LL w$ and applying the triangle inequality, along with the Cauchy--Schwarz inequality $\abs{\LL w}\leq \sqrt{d+1} \sqrt{\abs{\nabla^2 w}^2+\lambda^2\abs{w}^2}$.
\end{proof}

Let $\norm{\cdot}_{H^2(\Om)}$ denote the $H^2$-norm of functions in $H^2(\Om)$, defined by 
$$
\begin{aligned}
\norm{v}_{H^2(\Om)}^2\coloneqq \int_\Om\left[\abs{\nabla^2 v}^2+\abs{\nabla v}^2+\abs{v}^2 \right] &&& \forall v \in H^2(\Om).
\end{aligned}
$$
It is well-known that the convexity of $\Omega$ implies that the operator $\LL $ is bijective between $H^2(\Om)\cap H^1_0(\Om)$ and $L^2(\Om)$.
Furthermore, there exists a positive constant $C_{\dim,\diam\Om}$ depending only on $\dim$ and $\diam \Om$, the diameter of $\Om$, such that, for any $\lambda \geq 0$,
\begin{equation}\label{eq:MirandaTalenti}
\begin{aligned}
\frac{1}{C^2_{\dim,\diam\Om}}\norm{v}_{H^2(\Om)}^2 \leq \int_\Om \left[\abs{\nabla^2 v}^2 + 2\lambda \abs{\nabla v}^2 + \lambda^2 \abs{v}^2 \right] \leq \int_\Om \abs{\LL v}^2  &&& \forall v \in H^2(\Om)\cap H^1_0(\Om),
\end{aligned}
\end{equation}
where the first inequality is shown by the Poincar\'e inequality for functions in $H^1_0(\Om)$ and the identity $\int_\Om \abs{\nabla v}^2 = - \int_\Om v \Delta v $ for all $v\in H^2(\Om)\cap H^1_0(\Om)$, and the second inequality follows from the Miranda--Talenti inequality, see e.g.~\cite{MaugeriPalagachevSoftova00,SS13,SS14}.
We now show that there exists a unique strong solution in $H^2(\Om)\cap H^1_0(\Om)$ of the Isaacs equation~\eqref{eq:isaacs_pde} on convex domains under the Cordes condition, which generalises the well-posedness result for HJB equations of~\cite[Theorem~3]{SS14}.

\begin{theorem}[Existence and uniqueness of a strong solution]\label{thm:well_posedness}
There exists a unique $u\in H^2(\Om)\cap H^1_0(\Om)$ that solves $F[u]=0$ pointwise a.e.\ in $\Om$, and, equivalently, that solves $\Fg[u]=0$ pointwise a.e.\ in $\Om$.
\end{theorem}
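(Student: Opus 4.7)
The plan is to follow the renormalization-plus-contraction strategy of \cite[Theorem~3]{SS14}, extended to the Isaacs setting, which Lemma~\ref{lem:subsupersolutions} and Lemma~\ref{lem:cordes_ineq} have already set up. By Lemma~\ref{lem:subsupersolutions} it suffices to prove existence and uniqueness of a solution in $H^2(\Om)\cap H^1_0(\Om)$ of the renormalized equation $\Fg[u]=0$. The underlying Hilbert space is $H^2(\Om)\cap H^1_0(\Om)$ equipped with the norm $v\mapsto \norm{\LL v}_{L^2(\Om)}$, which by the convexity of $\Om$ and the Miranda--Talenti inequality~\eqref{eq:MirandaTalenti} is equivalent to the standard $H^2$-norm and makes $\LL$ an isometric isomorphism onto $L^2(\Om)$.

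Next I would define a fixed-point map $T\colon H^2(\Om)\cap H^1_0(\Om) \to H^2(\Om)\cap H^1_0(\Om)$ by letting $T(v)$ be the unique element of $H^2(\Om)\cap H^1_0(\Om)$ satisfying
\begin{equation*}
\LL T(v) = \LL v - \Fg[v].
\end{equation*}
This is well-defined because $\Fg[v]\in L^2(\Om)$ (which follows by applying~\eqref{eq:cordes_ineq2} of Lemma~\ref{lem:cordes_ineq} with $u=v$ and with $v$ replaced by $0$, together with $\Fg[0]\in L^2(\Om)$), and because $\LL$ is bijective from $H^2(\Om)\cap H^1_0(\Om)$ to $L^2(\Om)$. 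By construction, fixed points of $T$ are precisely the solutions of $\Fg[u]=0$ in $H^2(\Om)\cap H^1_0(\Om)$.

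The crux is to show that $T$ is a strict contraction with respect to the norm $\norm{\LL\cdot}_{L^2(\Om)}$. For $u,v\in H^2(\Om)\cap H^1_0(\Om)$ with $w\coloneqq u-v$, the definition gives $\LL(T(u)-T(v))=\LL w - (\Fg[u]-\Fg[v])$, so squaring, integrating, and applying the pointwise bound~\eqref{eq:cordes_ineq1} of Lemma~\ref{lem:cordes_ineq} yields
\begin{equation*}
\norm{\LL(T(u)-T(v))}_{L^2(\Om)}^2 \leq (1-\nu)\int_\Om \bigl[\abs{\nabla^2 w}^2+2\lambda\abs{\nabla w}^2+\lambda^2\abs{w}^2\bigr].
\end{equation*}
By the Miranda--Talenti bound in~\eqref{eq:MirandaTalenti}, the right-hand side is at most $(1-\nu)\norm{\LL w}_{L^2(\Om)}^2$, and since $\nu\in(0,1]$ the factor $\sqrt{1-\nu}<1$ gives the desired contraction. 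The Banach fixed-point theorem then furnishes a unique fixed point $u\in H^2(\Om)\cap H^1_0(\Om)$, and Lemma~\ref{lem:subsupersolutions} identifies it as the unique strong solution of $F[u]=0$.

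I expect no serious obstacle once Lemmas~\ref{lem:subsupersolutions} and~\ref{lem:cordes_ineq} are in place; the only subtle point is verifying that $\Fg$ maps $H^2(\Om)$ into $L^2(\Om)$, which is not automatic from the pointwise definition involving an infimum and a supremum over possibly uncountable sets. This however follows from~\eqref{eq:cordes_ineq2} applied with the zero function as a base point (using that $\Fg[0]=-\inf_\alpha\sup_\beta\gamma^{\ab}f^{\ab}$ is bounded by continuity of the data on the compact set $\overline{\Om}\times\sA\times\sB$), and hence presents no real difficulty.
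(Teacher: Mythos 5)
Your proof is correct, but it takes a different route from the paper. The paper defines the nonlinear form $A(w;v)=\int_\Om \Fg[w]\,\LL v$, shows it is Lipschitz continuous (via~\eqref{eq:cordes_ineq2}) and strongly monotone (via~\eqref{eq:cordes_ineq1} and~\eqref{eq:MirandaTalenti}), and invokes the Browder--Minty theorem to get a unique $u$ with $A(u;v)=0$ for all $v$, which by bijectivity of $\LL$ and Lemma~\ref{lem:subsupersolutions} is equivalent to $F[u]=0$. You instead run a Banach fixed-point argument for the map $T=I-\LL^{-1}\Fg$ in the norm $\norm{\LL\cdot}_{L^2(\Om)}$, obtaining the contraction factor $\sqrt{1-\nu}$ from exactly the same two ingredients, \eqref{eq:cordes_ineq1} and~\eqref{eq:MirandaTalenti}. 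The two arguments are close cousins (your map is precisely the Zarantonello-type iteration that underlies the proof of unique solvability for Lipschitz, strongly monotone operators), but yours is more elementary and more quantitative: it avoids citing Browder--Minty and yields an explicit geometric convergence rate $\sqrt{1-\nu}$ for a constructive iteration. What the paper's formulation buys is the structural parallel with the rest of the article: the continuous form $A(\cdot;\cdot)$ and its Lipschitz/strong-monotonicity properties are reused verbatim in the a posteriori bound (the proof of Theorem~\ref{thm:aposteriori} recycles~\eqref{eq:strong_monotonicity}) and mirror the discrete assumptions~\eqref{eq:discrete_cont}--\eqref{eq:discrete_mono}. One cosmetic slip in your closing remark: $\Fg[0]=\inf_{\alpha\in\sA}\sup_{\beta\in\sB}\left[-\gamma^{\ab}f^{\ab}\right]=-\sup_{\alpha\in\sA}\inf_{\beta\in\sB}\gamma^{\ab}f^{\ab}$, not $-\inf_\alpha\sup_\beta\gamma^{\ab}f^{\ab}$; this does not affect the conclusion that $\Fg[0]\in L^2(\Om)$, which follows from continuity of the data on the compact set $\overline{\Om}\times\sA\times\sB$.
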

\begin{proof}
The proof of Theorem~\ref{thm:well_posedness} follows the same arguments as in~\cite[Theorem~3]{SS14}, although we give here the details for completeness.
Let $A\colon H^2(\Om)\cap H^1_0(\Om)\times H^2(\Om)\cap H^1_0(\Om)\tends \R$ be defined by
\begin{equation}
\begin{aligned}
A(w;v)=\int_\Om \Fg[w]\LL v &&& \forall w,\,v \in H^2(\Om)\cap H^1_0(\Om).
\end{aligned}
\end{equation}
We infer from the bijectivity of the operator $\LL \colon H^2(\Om)\cap H^1_0(\Om) \tends L^2(\Om)$ and from Lemma~\ref{lem:subsupersolutions} that $u\in H^2(\Om)\cap H^1_0(\Om)$ solves $F[u]=0$ a.e.\ in $\Om$, and equivalently $\Fg[u]=0$ a.e.\ in $\Om$, if and only if $A(u;v)=0$ for all $v\in H^2(\Om)\cap H^1_0(\Om)$.
It is easy to see from~\eqref{eq:cordes_ineq2} that $A(\cdot;\cdot)$ is bounded and also Lipschitz continuous, i.e.\ that $\abs{A(w;v)-A(z;v)}\leq C \norm{w-z}_{H^2(\Om)}\norm{v}_{H^2(\Om)}$ for all $w$, $z$, $v\in H^2(\Om)\cap H^1_0(\Om)$ for some constant $C$.
We also claim that $A(\cdot;\cdot)$ strongly monotone on $H^2(\Om)\cap H^1_0(\Om)$, which will then imply that there exists a unique $u\in H^2(\Om)\cap H^1_0(\Om)$ that solves $A(u;v)=0$ for all $v\in H^2(\Om)\cap H^1_0(\Om)$ as a result of the Browder--Minty Theorem (see e.g.\ the textbook~\cite{Ciarlet2013}). 
To show strong monotonicity, let $w$, $v\in H^2(\Om)\cap H^1_0(\Om)$ be arbitrary and set $z\coloneqq w - v$; then, by addition and subtraction, we find that
\begin{equation}\label{eq:strong_monotonicity}
\begin{split}
A(w;w-v)-A(v;w-v) &=\int_\Om (\Fg[w]-\Fg[v])\LL z = \int_\Om \abs{\LL z}^2 +  \int_\Om (\Fg[w]-\Fg[v]-\LL z) \LL z \\ & \geq \left(1-\sqrt{1-\nu}\right)\int_\Om \abs{\LL z}^2  \geq (1-\sqrt{1-\nu})\Cdiam^{-2}\norm{z}_{H^2(\Om)}^2,
\end{split}
\end{equation}
where the inequalities in the last line follow from~\eqref{eq:cordes_ineq1} and~\eqref{eq:MirandaTalenti}. This shows that $A(\cdot;\cdot)$ is strongly monotone on~$H^2(\Om)\cap H^1_0(\Om)$ and completes the proof.
\end{proof}

As mentioned at the beginning of this section, this analysis in this section does not make use of the assumption $\dim\in \{2,3\}$, and nor does it require $\Omega$ to be polytopal. Therefore, Theorem~\ref{thm:well_posedness} holds for general bounded convex domains in arbitrary dimensions.

\section{Setting and notation}\label{sec:notation}

For a Lebesgue measurable set $\om \subset \mathbb R^{\dim}$, let $\abs{\om}$ denote its Lebesgue measure, and let $\diam\om$ denote its diameter. The $L^2$-norm of functions over $\omega$ is denoted by $\norm{\cdot}_{\om}$.
Let~$\cT$ be a finite conforming partition of $\Om$ into closed simplices, and let $\shapereg$ denote its shape-regularity parameter defined by
\begin{equation}
\shapereg \coloneqq \max_{K\in\cT} \frac{\diam K}{\rho_K},
\end{equation}
where $\rho_K$ is the diameter of the largest ball inscribed in the element $K$.
In the following, for real numbers $a$ and $b$, we write $a\lesssim b$ if there exists a constant $C$ such that $a\leq C b$, where $C$ depends only on the dimension $\dim$, the domain $\Om$, on $\shapereg$ and on the polynomial degrees $p$ and $q$ defined below, but is otherwise independent of all other quantities. We write $a\eqsim b$ if and only if $a\lesssim b$ and $b\lesssim a$.
Let $\cF$ denote the set of $\dim-1$ dimensional closed faces of the mesh, and let $\cF^I$ and $\cF^B$ denote the subsets of interior faces and boundary faces, respectively.
For each face $F\in\cF$, we consider a fixed choice of unit normal $\bn_F$.
If $F$ is a boundary face then we choose $\bn_F$ to be the unit outward normal to~$\Om$.
To alleviate the notation, we shall usually drop the subscript and simply write $\bn$ when there is no possibility of confusion.
For each $K\in \cT$, we define $h_K \coloneqq  \abs{K}^{\frac{1}{\dim}}$, and note that up to constants depending only on $\dim$ and on~$\shapereg$, we have $h_K \eqsim \diam(K)$.
For each face $F\in\cF$, let $h_F \coloneqq  \left(\mathcal{H}^{\dim-1}(F)\right)^{\frac{1}{\dim-1}}$, where $\mathcal{H}^{\dim-1}$ denotes the $(d-1)$-dimensional Hausdorff measure.
Similarly, we have $h_F\eqsim \diam(F)$ and $h_K\eqsim h_F$ for any element $K\in\cT$ and any face $F\in\cF$ contained in $K$, with constants in the equivalence depending only on $\shapereg$ and on $\dim$.
Let the global mesh-size function $\hT\colon \overline{\Om}\tends\R$ be defined by $\hT|_{K^\circ}=h_K$ for each $K\in\cT$, where $K^\circ$ denotes the interior of $K$, and $\hT|_F=h_F$ for each $F\in\cF$. 
The function $\hT$ is uniformly bounded in~$\Om$, and is only defined up to sets of zero $\mathcal{H}^{d-1}$-measure, which is sufficient for our purposes since $\hT$ only appears in integrals over sets of dimensions~$d-1$ and~$d$. The motivation for this particular definition of $\hT$ can be found in the analysis of adaptive methods, see~\cite{KaweckiSmears20adapt} for further details. For the purposes of this work, it is of course possible to consider common alternative definitions of $\hT$ that are equivalent up to constants depending on shape-regularity of the mesh.

\paragraph{{\bfseries Integration.}} It will be frequently convenient to use a shorthand notation for integrals over collections of elements and faces of the meshes. For any subcollection of elements $\mathcal{E}\subset\cT$, we shall write $\int_{\mathcal{E}}  \coloneqq  \sum_{K\in\mathcal{E}}\int_E$ where the measure of integration is the Lebesgue measure on $\mathbb R^{\dim}$. Likewise, if $\mathcal{G}\subset \cF$, we write $\int_{\mathcal{G}} \coloneqq  \sum_{F\in\mathcal{G}}\int_F$, where the measure of integration is the $(\dim-1)$-dimensional Hausdorff measure on $\mathbb R^{\dim}$. We do not indicate the measure of integration as there is no possibility of confusion.

\paragraph{{\bfseries Partial derivatives.}}
In order to unify and generalise the notions of weak derivatives of Sobolev regular functions and the notion of piecewise derivatives of functions from the finite element spaces, we define notions of gradients and Hessians of functions for certain classes of functions of bounded variation.
Let $BV(\Om)$ denote the space of real-valued functions of bounded variation on $\Omega$, see~\cite{AmbrosioFuscoPallara00,EvansGariepy2015} for precise definitions.
Recall that $BV(\Om)$ is a Banach space equipped with the norm $\norm{v}_{BV(\Om)}\coloneqq\norm{v}_{L^1(\Om)}+\abs{Dv}(\Om)$, where $\abs{Dv}(\Om)$ denotes the total variation of its distributional derivative $Dv$ over $\Omega$, defined by $\abs{Dv}(\Om)\coloneqq \sup\left\{\int_\Om v \Div \bphi\colon \bphi\in C^\infty_0(\Om;\R^\dim), \norm{\bphi}_{C(\overline{\Om};\R^\dim)}=1 \right\}$.

For any $v\in BV(\Omega)$, the distributional derivative $Dv$ can be identified with a Radon measure on $\Omega$ that can be decomposed into the sum of an absolutely continuous part with respect to Lebesgue measure, and a singular part~\cite[p.~196]{EvansGariepy2015}.
Let $\nabla v\in L^1(\Om;\R^\dim)$ denote the (vector) density of the absolutely continuous part of $D v$ with respect to Lebesgue measure.
Following \cite{FonsecaLeoniParoni05}, for functions $v\in BV(\Om)$ such that $\Npw v \in BV(\Om;\R^\dim)$, we define $\Dpw v$ as the density of the absolutely continuous part of $D(\nabla v)$ the distributional derivative of $\nabla v$; in particular, 
\begin{equation}\label{eq:Hessian_notation}
\begin{aligned}
\nabla^2 v \coloneqq \nabla(\nabla v) \in L^1(\Om;\R^{\dim\times\dim}), &&&
(\nabla^2 v)_{ij} \coloneqq \nabla_{x_j} (\nabla_{x_i} v) \quad \forall i,\, j\in \{1,\dots,\dim\}.
\end{aligned}
\end{equation}
The Laplacian $\Delta v$ is defined as the matrix trace of $\nabla^2 v$.
Note that $\Dpw v$ is defined in terms of $D(\nabla v)$ and not $D^2v$ the second distributional derivative of $v$ since in general $D^2 v$ is not necessarily a Radon measure.
The definitions above unify the concepts of weak derivatives of functions in Sobolev spaces over $\Omega$ and of piecewise derivatives of functions from the DG and $C^0$-IP finite element spaces defined shortly below. 
Indeed, it is easy to see that the above definition of $\Npw v$ coincides with the weak gradient of $v$ if $v\in W^{1,1}(\Om)$ and that $\Dpw v$ coincides with the weak Hessian of $v$ if $v\in W^{2,1}(\Om)$.
Moreover, for functions that are piecewise smooth over the mesh $\cT$, such as functions from the finite element spaces defined below, it is easy to see that the gradient and Hessian as defined above coincide with the piecewise gradient and Hessian over elements of the mesh.
The nonlinear Isaacs operators $F$ from~\eqref{eq:F_def} and $\Fg$ from~\eqref{eq:Fg_def} are then naturally extended to all functions $v\in BV(\Om)$ such that $\Npw v \in BV(\Om;\R^\dim)$.

\paragraph{{\bfseries Jump, average and tangential differential operators on faces.}}
There is a bounded trace operator $\tau_{\p K} \colon BV(K) \tends L^1(\p K)$ for each $K\in\cT$, see e.g.\ \cite{EvansGariepy2015}.
 It follows that a function $v\in BV(\Om)$, once restricted to an element $K\in \cT$, has a trace $\tau_{\p K} v\coloneqq  \tau_{\p K} (v|_K) \in  L^1(\p K)$. In general, if $F$ is an interior face of the mesh, i.e.\ $F=\p K\cap \p K^\prime$ for $K$, $K^\prime\in \cT$, then $\tau_{\p K} v|_F \neq \tau_{\p K^\prime} v|_F $, i.e.\ traces from different elements do not necessarily agree on a common face.
For $v\in BV(\Om)$, we define the jump $\jump{v}_F\in L^1(F)$ and average of $\avg{v}_F\in L^1(F)$ for each $F\in\cF$ by 
\begin{equation}\label{eq:jumpavg}
\begin{aligned}
  \avg{v}_F&\coloneqq  \frac{1}{2}\left(\tau_{\p K}v|_F+\tau_{\p K'}v|_F\right), & \jump{v}_F & \coloneqq  \tau_{\p K}v|_F-\tau_{\p K'}v|_F, &\forall F\in\cF^I,\\
  \avg{v}_F &\coloneqq  \tau_{\p K} v|_F &  \jump{v}_F & \coloneqq  \tau_{\p K} v|_F & \forall F\in\cF^B,
\end{aligned}  
\end{equation}
where, in the case $F\in\cF^I$, the elements $K$ and $K^\prime \in \cT$ are labelled such that the chosen unit normal $\bn_F$ is the outward normal to $K$ on $F$ and the inward normal to $K'$ on $F$, and where the trace operators $\tau_{\p K}$ and $\tau_{\p K^\prime}$ are applied to the restrictions of the function $v$ to $K$ and $K^\prime$, respectively.
The jump and average operators are further extended to vector fields in $BV(\Om;\mathbb R^\dim)$ componentwise.
Although the sign of $\jump{v}_F$ depends on the choice of $\bn_F$, in subsequent expressions the jumps will appear either under absolute value signs or in products with $\bn_F$, so that the overall resulting expression is uniquely defined and independent of the choice of $\bn_F$.
When no confusion is possible, we drop the subscripts and simply write $\avg{\cdot}$ and~$\jump{\cdot}$. 

For $F\in \cF$, let $\nabla_T$ denote the tangential (surface) gradient operator, and let $\Delta_T$ denote the tangential Laplacian, which are defined for all sufficiently smooth functions on $F$.
We do not indicate the dependence of these operators on $F$ in order to alleviate the notation, as it will be clear from the context.

\paragraph{{\bfseries Finite element spaces.}}
For a fixed choice of polynomial degree $p\geq 2$, let the finite element spaces~$\Vt^s$, $s\in\{0,1\}$, be defined by
\begin{equation}\label{sec:fem_spaces}
\begin{aligned}
  \Vt^0 &\coloneqq  \{\vt\in L^2(\Om):\vt|_K\in \Pp\;\forall K\in\cT\},
   &\Vt^1&\coloneqq   \Vt^0 \cap H^1_0(\Om),
\end{aligned}  
\end{equation}
where $\Pp$ denotes the space of polynomials of total degree at most $p$. 
The condition $p\ge2$ is required due to the fact that we seek approximations in $H^2$-type norms, thus requiring at least piecewise quadratic polynomials to approximate the Hessian of the true solution.
The spaces $\Vt^0$ and $\Vt^1$ correspond to DG and $C^0$-IP spaces on $\cT$, respectively.
It is clear that if $v\in  \Vt^s$, $s\in\{0,1\}$, then $v\in BV(\Om)$ and that $\nabla v$, as defined above, coincides with the piecewise gradient of $v$ over the elements of the mesh $\cT$. It then follows that $\nabla v \in BV(\Om;\R^\dim)$ and that the Hessian $\Dpw v$ defined above coincides with the piecewise Hessian of $v$ over the elements of the mesh.

The spaces $\Vt^s$, $s\in\{0,1\}$, are equipped with the norm $\normt{\cdot}$ and jump seminorm $\absJ{\cdot}$ defined by
\begin{equation}\label{eq:norm_def}
\begin{aligned}
\normt{\vt}^2\coloneqq  \int_{\Om}\left[ \abs{\Dpw \vt}^2+\abs{\Npw \vt}^2 + \abs{\vt}^2\right]+\absJ{v}^2, &&& \absJ{v}^2\coloneqq \int_{\cF^I}\hT^{-1}\abs{\jump{\Npw\vt}}^2+\int_{\cF}\hT^{-3}\abs{\jump{\vt}}^2,
\end{aligned}
\end{equation}
for all $\vt \in \Vt^s$.
Although  $\Vt^0$ and $\Vt^1$ are equipped with the same norm and jump seminorm, it is clear that for any $v\in \Vt^1 \subset H^1_0(\Om)$, the  last term in the right-hand side~\eqref{eq:norm_def} involving jumps over mesh faces vanishes and that the terms involving jumps of first derivatives over internal mesh faces can be simplified to merely jumps of normal derivatives.
However, these simplifications do not play any particular role in the subsequent analysis and do not need to be considered further.
The norm~$\normt{\cdot}$ and jump semi-norm $\absJ{\cdot}$ extend to the sum space $\Vt^s + \Hd$, where $H=H^2(\Om)\cap H^1_0(\Om)$.
Note that for general $v\in \Vt^s+\Hd$, we have $\absJ{v}=0$ if and only if $v\in \Hd$.

\paragraph{{\bfseries Poincar\'e--Friedrichs inequality.}} Although we consider here the norm~$\normt{\cdot}$ given in~\eqref{eq:norm_def}, our results are by no means specific to this choice of norm. This is a consequence of the following second-order Poincar\'e--Friedrichs inequality for functions in $\Vt^s$, which shows that the norm $\normt{\cdot}$ is equivalent to  other $H^2$-type norms.

\begin{theorem}[Poincar\'e--Friedrichs inequality]\label{thm:poincare}
There exists a constant $\CPF$ depending only on $\dim$, $\shapereg$, $p$, and on $\diam \Omega$ such that
\begin{equation}\label{eq:Poincare}
\begin{aligned}
\normt{\vt}\leq C_{\mathrm{PF}}\left( \int_\Om \abs{\nabla^2 \vt}^2 + \absJ{\vt}^2 \right)^{\frac{1}{2}} &&&\forall \vt \in \Vt^s, \quad\forall s\in\{0,1\}.
\end{aligned}
\end{equation}
\end{theorem}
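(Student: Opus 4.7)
The plan is to deduce this inequality from a standard $H^2$-conforming enrichment argument that bounds the non-conformity error by the jump seminorm, together with the classical continuous Poincaré--Friedrichs inequality on $H^2(\Om)\cap H^1_0(\Om)$. The minimal regularity hypothesis $\dim\in\{2,3\}$ enters precisely because it is only in these dimensions that one has readily-available $C^1$-conforming macroelements (Hsieh--Clough--Tocher in $2$d, Worsey--Farin in $3$d, or analogues) that can be used to construct such enrichments; this is exactly the role flagged earlier for Section~\ref{sec:abtract_apost}.

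First, I would invoke (or construct) an enrichment operator
\[
E_\cT\colon \Vt^s \longrightarrow \Hd
\]
satisfying the localized approximation bound
\begin{equation}\label{eq:enrich_plan}
\sum_{K\in\cT}\Bigl(h_K^{-4}\norm{\vt-E_\cT\vt}_K^2+h_K^{-2}\norm{\Npw(\vt-E_\cT\vt)}_K^2+\norm{\Dpw(\vt-E_\cT\vt)}_K^2\Bigr)\lesssim \absJ{\vt}^2
\end{equation}
for all $\vt\in\Vt^s$, with the implied constant depending only on $\dim$, $\shapereg$ and $p$. Such operators are standard and proceed by first averaging nodal/moment values across element interfaces (an Oswald-type procedure) to produce an intermediate function in a $C^1$-conforming macroelement space; the usual scaling/trace arguments on the reference element convert jumps of $\vt$ and $\Npw\vt$ into the three terms on the left-hand side of~\eqref{eq:enrich_plan}. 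Boundary jumps are absorbed by setting boundary degrees of freedom to zero, thereby landing in $\Hd$.

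Next, I would exploit that $E_\cT\vt\in\Hd$. On this space the Poincaré inequality for $H^1_0$ gives $\norm{E_\cT\vt}_\Om\lesssim \norm{\Npw E_\cT\vt}_\Om$, and the identity $\int_\Om|\Npw w|^2=-\int_\Om w\Delta w$ together with Cauchy--Schwarz and Miranda--Talenti (cf.~\eqref{eq:MirandaTalenti}) yields $\norm{\Npw E_\cT\vt}_\Om\lesssim \norm{\Dpw E_\cT\vt}_\Om$. Hence
\[
\norm{E_\cT\vt}_{H^2(\Om)}\lesssim \norm{\Dpw E_\cT\vt}_\Om\leq \norm{\Dpw \vt}_\Om+\norm{\Dpw(\vt-E_\cT\vt)}_{\cT}\lesssim \norm{\Dpw\vt}_\Om+\absJ{\vt},
\]
where the last estimate uses~\eqref{eq:enrich_plan}.

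Finally, I would assemble the bound via the triangle inequality. Since $E_\cT\vt\in\Hd$ has vanishing jumps, $\absJ{\vt-E_\cT\vt}=\absJ{\vt}$. Writing $\vt=E_\cT\vt+(\vt-E_\cT\vt)$ and bounding each of the three unweighted volume contributions of $\normt{\cdot}$ by~\eqref{eq:enrich_plan} (using $h_\cT\leq\diam\Om$ to absorb the mesh-size weights into the constant $\CPF$), I get
\[
\normt{\vt}\leq \normt{E_\cT\vt}+\normt{\vt-E_\cT\vt}\lesssim \norm{E_\cT\vt}_{H^2(\Om)}+\absJ{\vt}\lesssim \norm{\Dpw\vt}_\Om+\absJ{\vt},
\]
which is the desired inequality. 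The only nontrivial step is the construction and~\eqref{eq:enrich_plan} itself; everything else is a routine combination of triangle inequality and classical $H^2\cap H^1_0$ Poincaré-type bounds. I expect the author to either reference the enrichment result from Section~\ref{sec:abtract_apost} or to defer the verification of~\eqref{eq:enrich_plan} to the proofs section.
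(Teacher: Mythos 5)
There is a genuine gap, and it lies exactly in the constant-dependence that the statement asserts. Your argument routes everything through an $H^2(\Om)\cap H^1_0(\Om)$-conforming enrichment $E_\cT\colon \Vt^s\tends \Hd$ whose error you claim is controlled by $\absJ{\vt}$ with a constant depending only on $\dim$, $\shapereg$ and $p$. No such geometry-robust bound is available: the known constructions (Neilan--Wu, Brenner--Sung; this is the operator $E_2$ entering Lemma~\ref{lem:enrich} of the paper) must prescribe boundary degrees of freedom so that the tangential trace of the enriched gradient vanishes, and at \emph{sharp} (corner) boundary vertices this forces a bound of the type $\abs{\nabla\vt|_K(z)}^2\leq C_\sharp\sum_{F\ni z}\int_F \hT^{1-\dim}\abs{\jump{\nabla\vt\cdot\bn}}^2$ whose constant $C_\sharp$ blows up like $\tan^{-2}\omega$ for nearly flat corners; see Remark~\ref{rem:caveat} and the explicit two-element example there. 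Consequently the enrichment estimate you postulate, and hence your final bound, carries a constant depending on the corner geometry of $\p\Omega$, not merely on $\diam\Om$. That proves a strictly weaker statement than Theorem~\ref{thm:poincare}, and the distinction is not cosmetic: $\CPF$ feeds into the strong monotonicity constant in Theorem~\ref{thm:strong_mono_theta}, where robustness with respect to domain geometry is precisely the point the paper is making. (Your explanation of the restriction $\dim\in\{2,3\}$ via $C^1$ macroelements is also beside the point here; the paper attributes that restriction to the $H^2$-enrichment used in the \emph{a posteriori} analysis, which this theorem deliberately avoids.)

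For contrast, the paper's proof never leaves the nonconforming setting and never uses $E_2$: for $s=1$ it integrates by parts elementwise, $\int_\Om\abs{\nabla\vt}^2=-\int_\Om\vt\Delta\vt+\int_{\cF^I}\vt\jump{\nabla\vt\cdot\bn}$, applies an inverse inequality and the $H^1_0$ Poincar\'e inequality (constant $\leq C_{\diam\Om}$); for $s=0$ it only uses the $H^1_0$-conforming averaging operator $E_1\colon\Vt^0\tends\Vt^1$ of Karakashian--Pascal, whose constant depends only on $\dim$, $\shapereg$, $p$ (no boundary-gradient matching is required, so no sharp-vertex issue arises), together with the broken Poincar\'e inequality $\norm{\vt}_\Om^2\lesssim\norm{\nabla\vt}_\Om^2+\int_\cF\hT^{-1}\abs{\jump{\vt}}^2$. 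If you want to salvage your approach, you would either have to construct a geometry-robust $\Hd$-enrichment (which would contradict the mechanism identified in Remark~\ref{rem:caveat} for the existing constructions, so at minimum it requires a new idea), or replace the target space $\Hd$ by $\Vt^1$ and then handle the $s=1$ case by a direct argument, which is essentially the paper's proof.
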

\begin{proof}
We divide the proof into two steps, treating first the case $s=1$ followed by the more general case $s=0$. We note that in both cases, it is enough to show that the lower order terms in~\eqref{eq:norm_def} are bounded by the right-hand side of~\eqref{eq:Poincare}.

\emph{Step~1.} Suppose that $s=1$, and let $\vt\in \Vt^1$ be arbitrary. Then, integration-by-parts and an inverse inequality yield
\[
\int_\Om \abs{\nabla \vt}^2 = - \int_\Om \vt \Delta \vt + \int_{\cF^I} \vt \jump{\nabla \vt\cdot \bn} \lesssim \left(\int_\Om \abs{\nabla^2 \vt}^2 +  \absJ{\vt}^2\right)^{\frac{1}{2}}\norm{\vt}_\Om ,
\]
where the constant in the inequality above depends only on $\dim$, $\shapereg$ and $p$. Since $\Vt^1\subset H^1_0(\Om)$, we have $\norm{\vt}_\Om \leq C_{\diam\Om} \norm{\nabla \vt}_\Om$ with a constant $C_{\diam\Om}$ depending only on $\diam\Omega$, from which~\eqref{eq:Poincare} for $s=1$ follows immediately.

\emph{Step~2.} Suppose now that $s=0$ and let $\vt\in\Vt^0$ be arbitrary. We use the $H^1_0$-enrichment operators from \cite{KarakashianPascal03,HoustonSchotzauWihler07}. In particular, there exists a linear operator $E_1\colon \Vt^0\tends \Vt^1$ such that
\begin{equation}\label{eq:enrichment_operator_1}
\begin{aligned}
\sum_{m=0}^{2}\int_K\hT^{2m-4}\abs{\nabla^m(\vt-E_1\vt) }^2 \lesssim \int_{\cF_K} \hT^{-3}\abs{\jump{\vt}}^2, &&& \forall K\in\cT,\;\forall \vt\in\Vt^0,
\end{aligned}
\end{equation}
where $\cF_K\coloneqq \{F\in\cF,\,F\cap K\neq \emptyset\}$ is the set of faces neighbouring the element $K$. 
The constant in~\eqref{eq:enrichment_operator_1} depends only on $\dim$, $\shapereg$ and $p$, but not on $\Omega$.
In particular, the bound in~\eqref{eq:enrichment_operator_1} for $m=1$ is a consequence of~\cite[Theorem~2.2]{KarakashianPascal03}, and the cases $m\in\{0,2\}$ are shown in a similar manner by  scaling arguments.
We then infer from the triangle inequality, the trace inequality and~\eqref{eq:enrichment_operator_1} that $\absJ{E_1\vt}\leq\absJ{\vt}+\absJ{\vt-E_1\vt}\lesssim \absJ{\vt}$ again with a constant depending only on $\dim$, $\shapereg$ and $p$. 
Therefore, using the triangle inequality and~\eqref{eq:Poincare} for $s=1$ and~\eqref{eq:enrichment_operator_1}, we find that
\begin{multline*}
\norm{\nabla \vt}_\Om^2 \lesssim \norm{\nabla E_1\vt}_\Om^2 + \norm{\nabla(\vt-E_1\vt)}_\Om^2 \\ \lesssim \int_\Om \abs{\nabla^2E_1 \vt}^2+\absJ{E_1\vt}^2 + \absJ{\vt}^2 \lesssim \int_\Om \abs{\nabla^2 \vt}^2 + \absJ{\vt}^2,
\end{multline*}
with a constant depending only on $\dim$, $p$, $\shapereg$ and $\diam\Om$. We then obtain~\eqref{eq:Poincare} upon recalling the inequality $\norm{\vt}^2_\Om\lesssim \norm{\nabla \vt}^2_\Om+\int_{\cF}\hT^{-1}\abs{\jump{\vt}}^2$ with a constant depending only on $\dim$, $\shapereg$, and $\diam\Om$, see e.g.~\cite{Brenner03}.
\end{proof}

In the subsequent analysis, we occasionally use the $\lambda$-weighted seminorm $\absL{\cdot}\colon\Vt^s\tends \R$ defined by 
\begin{equation}\label{eq:lambda_seminorm}
\begin{aligned}
\absL{\vt}^2 \coloneqq \int_\Om \left[ \abs{\nabla^2 \vt}^2+2\lambda \abs{\nabla \vt}^2+\lambda^2\abs{\vt}^2\right] &&&\forall\vt \in \Vt^s.
\end{aligned}
\end{equation}
In general $\absL{\cdot}$ is only a seminorm for $\lambda\geq 0$, but is a norm if $\lambda>0$.
It is clear that $\absL{\vt}^2+\absJ{\vt}^2\leq c_{\lambda}^2 \normt{\vt}^2$ with constant $c_\lambda=\max\{1,\sqrt{2\lambda},\lambda\}$ for all $\vt\in\Vt^s$.
Theorem~\ref{thm:poincare} implies a converse bound, namely that $\normt{\vt}^2\lesssim \absL{\vt}^2+\absJ{\vt}^2$ for all $\vt\in\Vt^s$.

\section{General framework for \emph{a priori} and \emph{a posteriori} error analysis}\label{sec:framework}

We now present a general framework for the \emph{a priori} and \emph{a posteriori} error analysis of a broad range of numerical methods.
We start by showing that the \emph{a posteriori} error analysis is essentially determined only by the approximation spaces, and is otherwise independent of the choice of numerical methods.
For this reason, we present the \emph{a posteriori} error bound before discussing numerical discretizations of~\eqref{eq:isaacs_pde}.

\subsection{A posteriori error bound}\label{sec:abtract_apost}

Our first main result is an~\emph{a posteriori} error bound, where we prove reliability and local efficiency of residual-type error estimators.
The analysis hinges on the following Lemma, which shows that the jump seminorm $\absJ{\cdot}$ defined in~\eqref{eq:norm_def} controls the distance of functions $\Vt^s$ from $H^2(\Om)\cap H^1_0(\Om)$.
See also~\cite{S18} for related results that are explicit in the polynomial degree on more general meshes, and see also the concluding remarks in~\cite{BrennerSung2019}.

\begin{lemma}[$H^2(\Om)\cap H^1_0(\Om)$-\textbf{Approximation}]\label{lem:enrich}
There exists a linear operator $E_\cT:\Vt^0\to H^2(\Om)\cap H^1_0(\Om)$ such that
\begin{equation}\label{eq:enrichment_operator}
\begin{aligned}
\sum_{m=0}^2 \int_\Om \hT^{2m-4} \abs{\nabla^m(\vt - E_{\cT}\vt)}^2 \lesssim \absJ{\vt}^2 &&&\forall \vt\in\Vt^0.
\end{aligned}
\end{equation}
\end{lemma}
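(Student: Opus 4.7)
The plan is to build $E_\cT$ as a composition $E_\cT = E_2 \circ E_1$, where $E_1\colon\Vt^0\to\Vt^1$ is the $H^1_0$-conforming enrichment operator from~\eqref{eq:enrichment_operator_1} already used in Theorem~\ref{thm:poincare}, and $E_2\colon\Vt^1\to H^2(\Om)\cap H^1_0(\Om)$ is an $H^2$-conforming enrichment into a suitable $C^1$ macroelement space (Hsieh--Clough--Tocher in $\dim=2$, Worsey--Farin or similar in $\dim=3$). This is precisely where the restriction $\dim\in\{2,3\}$ is used, as flagged at the start of Section~\ref{sec:pde}.

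First, summing~\eqref{eq:enrichment_operator_1} over $K\in\cT$ immediately gives
\[
\sum_{m=0}^{2}\int_\Om\hT^{2m-4}\abs{\Npw^m(\vt-E_1\vt)}^2 \lesssim \int_{\cF}\hT^{-3}\abs{\jump{\vt}}^2 \leq \absJ{\vt}^2,
\]
so that the $\vt - E_1\vt$ contribution to $\vt - E_\cT\vt$ already has the required form. Writing $w\coloneqq E_1\vt\in\Vt^1$, I would then estimate the jumps of its gradient using $\jump{\Npw w}=\jump{\Npw\vt}+\jump{\Npw(w-\vt)}$, so that the triangle inequality together with the trace and inverse inequalities applied to $\Npw(w-\vt)$, combined with~\eqref{eq:enrichment_operator_1}, yields
\[
\int_{\cF^I}\hT^{-1}\abs{\jump{\Npw w}}^2 \lesssim \int_{\cF^I}\hT^{-1}\abs{\jump{\Npw \vt}}^2 + \int_{\cF}\hT^{-3}\abs{\jump{\vt}}^2 \leq \absJ{\vt}^2.
\]

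The core technical step is to produce $E_2\colon \Vt^1\to H^2(\Om)\cap H^1_0(\Om)$ satisfying
\[
\sum_{m=0}^{2}\int_\Om\hT^{2m-4}\abs{\Npw^m(w-E_2w)}^2 \lesssim \int_{\cF^I}\hT^{-1}\abs{\jump{\Npw w}}^2
\]
for every $w\in\Vt^1$. The construction proceeds by subdividing each simplex according to the chosen macroelement split and prescribing the degrees of freedom of $E_2 w$ (values and derivatives at vertices, normal derivatives on edges, and, in three dimensions, on faces) by averaging the corresponding data of $w$ taken from the neighbouring elements. Since $w$ is already $C^0$ and vanishes on $\p\Om$, the same properties are inherited by $E_2 w$, and local Bramble--Hilbert and scaling arguments convert the mismatches in normal derivatives across interior faces into the stated right-hand side; jumps of $w$ itself do not arise because $w\in H^1_0(\Om)$.

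Setting $E_\cT\vt\coloneqq E_2 E_1 \vt$ and combining the three bounds above via the triangle inequality produces~\eqref{eq:enrichment_operator}. The main obstacle is the construction and analysis of $E_2$ in three space dimensions, where one must rely on a $C^1$ macroelement such as Worsey--Farin of sufficiently high polynomial degree, so that the interpolation conditions used in the averaging are unisolvent and the usual scaling arguments apply; in two dimensions the HCT construction is classical, and the overall averaging strategy is by now standard in the $H^2$-nonconforming finite element literature.
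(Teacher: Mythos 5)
Your proposal follows essentially the same route as the paper: the paper also defines $E_\cT \coloneqq E_2 E_1$ with the same $H^1_0$-conforming operator $E_1$ from~\eqref{eq:enrichment_operator_1}, controls the gradient jumps of $E_1\vt$ by $\absJ{\vt}$ exactly as you do, and concludes by the triangle inequality; the only difference is that for $E_2$ the paper simply cites the existing constructions of~\cite{NeilanWu19,BrennerSung2019} (bound~\eqref{eq:enrichment_operator_2}), which are precisely the $C^1$-macroelement averaging operators you sketch. Your outline of $E_2$ is consistent with those references (note only that the treatment of derivative degrees of freedom at sharp boundary vertices is where the constant picks up the dependence on the geometry of $\Omega$ discussed in Remark~\ref{rem:caveat}, which is permitted since $\lesssim$ here may depend on $\Om$).
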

\begin{proof}
We recall the operator $E_1\colon\Vt^0\tends\Vt^1$ used in the proof of Theorem~\ref{thm:poincare} above.
Furthermore, in~\cite{NeilanWu19} (for $\dim=2$ and $p\geq 2$ or $\dim=3$ and $2\leq p \leq 3$) and in~\cite{BrennerSung2019} (for $\dim\in\{2,3\}$ and $p\geq 2$) it is shown that there exists a linear operator $E_2\colon \Vt^1\tends H^2(\Om)\cap H^1_0(\Om)$ such that
\begin{equation}\label{eq:enrichment_operator_2}
\begin{aligned}
\sum_{m=0}^{2}\int_K \hT^{2m-4}\abs{\nabla^m(\widetilde{v}_{\cT}-E_2\widetilde{v}_{\cT})}^2 \lesssim \int_{\cF^I_K} \hT^{-1}\abs{\jump{\Npw \widetilde{v}_{\cT} \cdot \bn}}^2 &&&\forall K\in \cT,\;\forall \widetilde{v}_{\cT}\in\Vt^1,
\end{aligned}
\end{equation}
where $\cF^I_K\coloneqq \cF_K\cap \cF^I$ is the set of interior faces adjacent to $K$, see~\cite[Lemma~3]{NeilanWu19} and \cite{BrennerSung2019}. Then, we define the operator $E_{\cT}$ as the composition of the operators $E_1$ and $E_2$, i.e.\ $E_{\cT}\coloneqq E_2 E_1 $, and~\eqref{eq:enrichment_operator} is obtained by applying the triangle inequality to $\vt-E_{\cT}\vt=\vt-E_1\vt+E_1\vt-E_2(E_1 \vt)$ and applying the bounds~\eqref{eq:enrichment_operator_1} and~\eqref{eq:enrichment_operator_2} with summation over all elements of the mesh.
\end{proof}

The primary use of Lemma~\ref{lem:enrich} for our purposes is the implication that
\begin{equation}\label{eq:C_E_constant}
\begin{aligned}
\inf_{w\in H^2(\Om)\cap H^1_0(\Om)} \normt{\vt-w} \lesssim  \absJ{\vt} &&& \forall \vt\in\Vt^s,\;\quad\forall s\in\{0,1\}, 
\end{aligned}
\end{equation}
where the constant in the inequality above depends possibly on $\dim$, on $\shapereg$, on $p$ and on $\Omega$; see also Remark~\ref{rem:caveat} below for further discussion of the constants.
We now introduce the residual-type error estimators that form the basis of the \emph{a posteriori} error analysis.
For any $\vt\in\Vt^s$, $s\in\{0,1\}$, let the elementwise error estimators $\{\eT(\vt,K)\}_{K\in\cT}$ and global error estimator $\eT(\vt)$ be defined by
\begin{subequations}\label{eq:error_estimators}
\begin{align}
[\eT(\vt,K)]^2 &\coloneqq \int_K \abs{\Fg[\vt]}^2 + \sum_{\substack{F\in\cF^I\\ F\subset\p K}}\int_F\delta_F \hT^{-1}\abs{\jump{\nabla \vt}}^2 + \sum_{\substack{F\in\cF \\ F\subset\p K}}\int_F \delta_F\hT^{-3}\abs{\jump{\vt}}^2,
\\ [\eT(\vt)]^2 &\coloneqq \sum_{K\in\cT} [\eT(\vt,K)]^2,
\end{align}
\end{subequations}
with weights $\delta_F \coloneqq 1/2$ if $F\in\cF^I$ and $\delta_F\coloneqq 1$ if $F\in\cF^B$.
Recall that the expression $\Fg[\vt]$ is computed using the notion of gradients and Hessians of $\vt$ as defined in Section~\ref{sec:notation}, which, for functions from the finite element spaces, coincide with the notions of piecewise gradients and Hessians, respectively.
For the special case $s=1$, we note that the term involving the jumps $\jump{\vt}$ vanishes identically and thus may be dropped, and that the term involving jumps of gradients can be simplified to the jumps in the normal component of the gradients.
However these simplifications have no special consequence in the results below.
In practice, one may consider a number of variants of the estimators in~\eqref{eq:error_estimators}, e.g.\ including various weightings of the different terms; we employ the above choice of estimators for simplicity of presentation.

For each element $K\in\cT$, we define $\norm{\cdot}_{\cT,K}\colon \Vt^s+\Hd\tends \R$ the localization of the norm $\norm{\cdot}_{\cT}$ to $K$ by
\begin{equation}
\norm{v}_{\cT,K}^2 \coloneqq \int_K \left[ \abs{\nabla^2 v}^2+\abs{\nabla v}^2+\abs{v}^2 \right] 
+ \sum_{\substack{F\in\cF^I \\ F\subset\p K}}\int_F \delta_F \hT^{-1}\abs{\jump{\nabla v}}^2 + \sum_{\substack{F\in\cF \\ F\subset\p K}}\int_F \delta_F\hT^{-3}\abs{\jump{v}}^2.
\end{equation}
For any $v\in \Vt^s+\Hd$ there holds $\norm{v}_{\cT}^2 = \sum_{K\in\cT}\norm{v}_{\cT,K}^2$.

We now present an \emph{a posteriori} error bound for arbitrary functions from the approximation space, and not only the numerical solution.
Recall that $u\in H^2(\Om)\cap H^1_0(\Om)$ denotes the unique solution of~$F[u]=0$ and equivalently of $\Fg[u]=0$ pointwise a.e.\ in $\Om$, see Theorem~\ref{thm:well_posedness}.

\begin{theorem}[A posteriori error bound]\label{thm:aposteriori}
There exists a positive constant $C_{\mathrm{rel}}$ depending only on $\dim$, $\shapereg$, $p$, $\lambda$, $\nu$ and $\Om$, such that, for any $s\in\{0,1\}$,
\begin{equation}\label{eq:aposteriori_reliability}
\begin{aligned}
\normt{u-\vt} \leq C_{\mathrm{rel}} \eT(\vt) &&&\forall \vt\in\Vt^s.
\end{aligned}
\end{equation}
There exists a positive constant $C_{\mathrm{eff},\mathrm{loc}}$ depending only on $\dim$ and $\lambda$, such that
\begin{equation}\label{eq:aposteriori_local_efficiency}
\begin{aligned}
 \eT(\vt,K)  \leq C_{\mathrm{eff},\mathrm{loc}} \norm{u- \vt}_{\cT,K} &&&\forall K\in\cT,\;\forall \vt\in\Vt^s.
 \end{aligned}
\end{equation} 
There exists a positive constant $C_{\mathrm{eff},\mathrm{glob}}$ depending only on $\dim$ and $\lambda$, such that
\begin{equation}\label{eq:aposteriori_global_efficiency}
\begin{aligned}
 \eT(\vt)  \leq C_{\mathrm{eff},\mathrm{glob}} \norm{u - \vt}_{\cT} &&& \forall \vt\in\Vt^s.
 \end{aligned}
\end{equation}
\end{theorem}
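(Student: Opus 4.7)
The plan is to establish the three bounds separately, exploiting the decomposition provided by the enrichment operator $E_\cT$ of Lemma~\ref{lem:enrich} for reliability, and using the fact that $u\in\Hd$ has vanishing jumps together with the pointwise Lipschitz continuity of $\Fg$ for efficiency. The main obstacle is reliability: the $\normt{\cdot}$-norm has both a volume and a jump part, and one must convert the abstract well-posedness machinery of Theorem~\ref{thm:well_posedness}, which lives in $\Hd$, into a bound for an arbitrary piecewise polynomial function.

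For the reliability bound~\eqref{eq:aposteriori_reliability}, I would first split $u-\vt = (u - E_\cT\vt) + (E_\cT\vt - \vt)$, where $z\coloneqq u - E_\cT \vt\in\Hd$. The second piece is easy: since $E_\cT\vt\in\Hd$ has no jumps, $\absJ{E_\cT\vt-\vt} = \absJ{\vt}$, and Lemma~\ref{lem:enrich} together with Theorem~\ref{thm:poincare} and the fact that $\hT\lesssim\diam\Om$ yield $\normt{E_\cT\vt - \vt}\lesssim \absJ{\vt}$. For the first piece, I would reproduce the strong monotonicity chain~\eqref{eq:strong_monotonicity} applied to $w=u$ and $v=E_\cT\vt$. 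Because $\Fg[u]=0$ pointwise a.e., this gives
\[
(1-\sqrt{1-\nu})\int_\Om\abs{\LL z}^2 \leq -\int_\Om \Fg[E_\cT\vt]\,\LL z \leq \norm{\Fg[E_\cT\vt]}_\Om\,\norm{\LL z}_\Om,
\]
so that by Miranda--Talenti~\eqref{eq:MirandaTalenti} we obtain $\norm{z}_{H^2(\Om)}\lesssim \norm{\Fg[E_\cT\vt]}_\Om$. A triangle inequality, the Lipschitz bound~\eqref{eq:cordes_ineq2} and Lemma~\ref{lem:enrich} then control
\[
\norm{\Fg[E_\cT\vt]}_\Om \leq \norm{\Fg[\vt]}_\Om + \norm{\Fg[E_\cT\vt]-\Fg[\vt]}_\Om \lesssim \norm{\Fg[\vt]}_\Om + \absL{E_\cT\vt-\vt} \lesssim \norm{\Fg[\vt]}_\Om + \absJ{\vt}.
\]
Combining the two pieces and observing that $[\eT(\vt)]^2 = \norm{\Fg[\vt]}_\Om^2 + \absJ{\vt}^2$ (by summing the face weights $\delta_F=1/2$ over the two neighbouring elements of each interior face and $\delta_F=1$ on each boundary face) yields~\eqref{eq:aposteriori_reliability}.

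For the local efficiency~\eqref{eq:aposteriori_local_efficiency}, I would use that $u\in\Hd$ forces $\jump{u}=0$ on every $F\in\cF$ (interior faces by continuity, boundary faces by the zero trace) and $\jump{\Npw u}=0$ on every $F\in\cF^I$, so that $\jump{\vt}=\jump{\vt-u}$ and $\jump{\Npw\vt}=\jump{\Npw(\vt-u)}$ on the corresponding faces. The face contributions in $\eT(\vt,K)$ therefore coincide exactly with those in $\norm{u-\vt}_{\cT,K}$. For the volume term, since $\Fg[u]=0$ a.e., the pointwise Lipschitz estimate~\eqref{eq:cordes_ineq2} applied in $K^\circ$ gives
\[
\abs{\Fg[\vt]}^2 \leq \bigl(1+\sqrt{\dim+1}\bigr)^2 \bigl( \abs{\Dpw(u-\vt)}^2 + 2\lambda\abs{\Npw(u-\vt)}^2 + \lambda^2\abs{u-\vt}^2 \bigr)
\]
pointwise a.e.\ in $K^\circ$; integrating over $K$ and absorbing the $\lambda$-dependent factors into the constant yields the bound against $\norm{u-\vt}_{\cT,K}^2$. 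The global efficiency~\eqref{eq:aposteriori_global_efficiency} then follows at once by summing~\eqref{eq:aposteriori_local_efficiency} over $K\in\cT$ and using the identity $\norm{u-\vt}_\cT^2=\sum_{K\in\cT}\norm{u-\vt}_{\cT,K}^2$.
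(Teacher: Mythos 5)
Your proof is correct and follows essentially the same route as the paper: reliability via the strong monotonicity estimate~\eqref{eq:strong_monotonicity}, the Miranda--Talenti bound~\eqref{eq:MirandaTalenti}, the Lipschitz bound~\eqref{eq:cordes_ineq2} and the enrichment of Lemma~\ref{lem:enrich} (the paper keeps a generic $w\in\Hd$ and takes an infimum using~\eqref{eq:C_E_constant}, whereas you substitute $w=E_\cT\vt$ directly, which is only a cosmetic difference), and efficiency via $\Fg[u]=0$, the pointwise Lipschitz estimate and the vanishing jumps of $u$, exactly as in the paper.
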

\begin{proof}
Let $\vt\in \Vt^s$ and $w\in H^2(\Om)\cap H^1_0(\Om)$ be arbitrary functions. Then, recalling~\eqref{eq:strong_monotonicity} we see that
\[
\begin{aligned}
(1-\sqrt{1-\nu})\norm{\LL(u-w)}_\Om^2 &\leq \int_{\Om}\Fg[w]\LL(u-w) \\  &\leq (\norm{\Fg[\vt]}_\Om+\norm{\Fg[w]-\Fg[\vt]}_{\Om})\norm{\LL(u-w)}_\Om.
\end{aligned}
\]
Then, using the fact that $\normt{u-w}=\norm{u-w}_{H^2(\Om)}$, and by combining the above inequality with~\eqref{eq:MirandaTalenti} and the Lipschitz continuity bound of $\Fg$ in~\eqref{eq:cordes_ineq2}, we find that
\begin{equation}\label{eq:apost_1}
\begin{split}
\normt{u-\vt} & \leq \norm{u-w}_{H^2(\Om)}+\normt{w-\vt} 
\\ &\leq C_{\dim,\diam\Om}\norm{\LL(u-w)}_{\Om} + \normt{\vt-w} 
\\ & \leq C_{\dim,\diam\Om} c_\nu \left(\norm{\Fg[\vt]}_\Om +\norm{\Fg[w]-\Fg[\vt]}_\Om \right) + \norm{\vt-w}_{\cT}
\\ &\leq C_{\dim,\diam\Om} c_\nu \norm{\Fg[\vt]}_\Om + (1+ C_{\dim,\diam\Om} c_\nu c_\lambda(1+\sqrt{d+1}))\normt{\vt-w},
\end{split}
\end{equation}
with $c_\nu=(1-\sqrt{1-\nu})^{-1}$, and $c_\lambda=\max\{1,\sqrt{2\lambda},\lambda\}$.
Since the function $w$ in~\eqref{eq:apost_1} is arbitrary, we may take the infimum over all $w\in H^2(\Om)\cap H^1_0(\Om)$ and apply~\eqref{eq:C_E_constant} to obtain 
\[
\norm{u-\vt}_\Om \leq C_2\left(\norm{\Fg[\vt]}_\Om+ \absJ{\vt}\right)\leq C_{\mathrm{rel}} \eT(\vt),
\]
for some constants $C_2$ and $C_{\mathrm{rel}}$ that depend possibly on $\dim$, $\shapereg$, $p$, $\nu$, $\lambda$ and $\Om$, which proves~\eqref{eq:aposteriori_reliability}.
To prove~\eqref{eq:aposteriori_local_efficiency}, we use Theorem~\ref{thm:well_posedness} which shows that $u\in H^2(\Om)\cap H^1_0(\Om)$ solves $\Fg[u]=0$ pointwise a.e.\ and thus infer that, for all $K\in\cT$,
\[
\begin{split}
\eT(\vt,K) \leq c_\lambda(1+\sqrt{d+1})\norm{\vt-u}_{\cT,K},
\end{split}
\]
with $c_\lambda$ as above, where we have used the Lipschitz bound from~\eqref{eq:cordes_ineq2} to bound $\norm{\Fg[\vt]}_\Om=\norm{\Fg[\vt]-\Fg[u]}_\Om$.
This gives~\eqref{eq:aposteriori_local_efficiency} with $C_{\mathrm{eff,loc}}=c_{\lambda}(1+\sqrt{d+1})$.
We then obtain~\eqref{eq:aposteriori_global_efficiency} from~\eqref{eq:aposteriori_local_efficiency} by taking square powers and summing over all elements of the mesh.
\end{proof}

\begin{remark}
A posteriori error bounds of a similar nature have been shown already in~\cite{Gallistl17,Bleschmidt19,Kawecki19c} for various numerical methods.
However, Theorem~\ref{thm:aposteriori} shows that the a posteriori error bounds are not restricted to any particular numerical method, as the bounds apply to arbitrary piecewise polynomial approximations on $\cT$.
The significance for computational practice is then that the error estimators are reliable and efficient even for inexactly computed numerical solutions, obtained from iterative solvers for the nonlinear discrete problem.
Furthermore, Theorem~\ref{thm:aposteriori} can be applied to a wide range of approximations using various finite element spaces, such as Morley or Hermite elements, and, up to substituting $\cT$ for a submesh, macro-elements such as the Hsieh--Clough--Tocher element~\cite{Ciarlet02}. Naturally, there may be some simplifications that can be made in the estimators when taking their restrictions to subspaces of $\Vt^0$ with higher regularity.
The fact that Theorem~\ref{thm:aposteriori} holds for arbitrary $\vt \in \Vt^s$ presents some substantial differences with the case of the usual residual-based error estimators for weak solutions of divergence form elliptic problems.
Recall that for divergence form problems, the derivation of the upper bound relies on some form of Galerkin orthogonality satisfied by the numerical solution~\cite{Verfurth13,CarstensenGudiJensen09}. This is due to the issue of localizing and bounding the negative-order Sobolev norm of the residual, see~\cite{BlechtaMalekVohralik2020} for further details.
By comparison, in the present setting the residual is in $L^2$, so the residual norm localizes trivially.
\end{remark}

Theorem~\ref{thm:aposteriori} shows that the residual-type estimators in~\eqref{eq:error_estimators} are reliable and locally efficient. In~\cite{KaweckiSmears20adapt}, we use these estimators to construct and prove convergence of adaptive DG and $C^0$-IP methods for the problem at hand. 
Note that the estimators in the present setting have some notable differences with residual estimators for approximations of divergence form elliptic problems in $H^1$-type norms~\cite{Verfurth13}.
The estimators defined in~\eqref{eq:error_estimators} do not include any weighting of the volume residual terms with positive powers of the mesh-size function $\hT$; this is indeed both natural and optimal as shown by the efficiency bounds~\eqref{eq:aposteriori_local_efficiency} and~\eqref{eq:aposteriori_global_efficiency}.
This has important ramifications for the analysis of adaptive methods~\cite{KaweckiSmears20adapt}.
In comparison to residual estimators for divergence form elliptic problems, here the residual term for the PDE is entirely located on the elements, and the face terms measure only the nonconformity of the approximations.
In consequence, the local efficiency bound~\eqref{eq:aposteriori_local_efficiency} is indeed fully local to an element and to its faces.

The estimators given here are reliable, although it appears harder to make them guaranteed, i.e.\ to obtain a guaranteed upper bound on the error without unknown constants, since this would require determining the constant $C_{\mathrm{rel}}$.
Indeed, the principal difficulty is to determine the constant in~\eqref{eq:C_E_constant} that feeds into $C_{\mathrm{rel}}$.
It appears possible however to obtain a guaranteed and fully computable estimator by replacing the part of the estimator associated to the jumps of function values and gradients over mesh faces by a computable choice of $w\in H^2(\Om)\cap H^1_0(\Om)$ that appears in the proof of Theorem~\ref{thm:aposteriori}, for instance using the approximation constructed in Lemma~\ref{lem:enrich}.
This however appears to be rather involved in practice, so we do not consider it further.
In the numerical experiment of Section~\ref{sec:numexp} below, it is found that in practice the estimators are quite close to the true errors, suggesting that $C_{\mathrm{rel}}$ is close to a value of $1$ for that experiment.

\begin{figure}
\begin{center}
\includegraphics{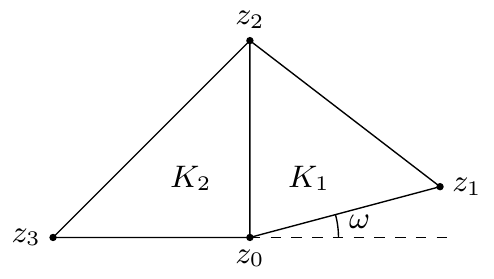}
\caption{[Example of Remark~\ref{rem:caveat}]. A pair of elements $K_1$ and $K_2$ are formed by the vertices $z_0=0$, $z_1=(\cos\omega,\sin\omega)$ for some $\omega\in(0,\pi/2)$, $z_2=(0,1)$ and $z_3=(-1,0)$. Suppose that the lower edges of $K_1$ and $K_2$ are on the boundary $\p\Omega$, so that $z_0$ is a corner (sharp) vertex of $\p\Omega$.
}\label{fig:corner}
\end{center}
\end{figure}

\begin{remark}[Dependence of constants on domain geometry]\label{rem:caveat}
The constant in~\eqref{eq:enrichment_operator} possibly depends on the space dimension $\dim$, the shape-regularity parameter $\shapereg$, the polynomial degree $p$ as may be expected. However,  the constant in the bound~\eqref{eq:enrichment_operator} also depends on constants appearing in the analysis in~\cite{NeilanWu19,BrennerSung2019} that are not robust with respect to the geometry of the boundary $\p\Omega$, as we now explain.
It is enough to consider momentarily $\dim=2$ and $s=1$; then the enrichment operators from~\cite{NeilanWu19,BrennerSung2019} (both labelled here $E_2$ in a slight abuse of notation) both prescribe that the gradient of the $H^2\cap H^1_0$-enrichment approximation must vanish identically at corner points of the boundary (called \emph{sharp} vertices in \cite{NeilanWu19}) see e.g.~\cite[Lemma~2]{NeilanWu19} and~\cite[Section~3.3.1]{BrennerSung2019}.
Supposing that $\vt \in \Vt^1$ is the function to be approximated, and $z$ is a sharp (corner) vertex of $\p\Omega$, then the analysis in the references above involve a bound of the form
\begin{equation}\label{eq:corner_bound_issue}
\abs{\nabla \vt|_K(z)-\nabla E_2 \vt(z)}^2 = \abs{\nabla \vt|_K(z)}^2\leq C_{\sharp} \sum_{F\in\cF^I;z\in F} \int_F \hT^{1-\dim} \abs{\jump{\nabla\vt\cdot\bn}}^2,
\end{equation}
for all elements $K$ sharing the vertex $z$, see~\cite[eq.~(3.11)]{NeilanWu19} and the first displayed equation in~\cite[p.~11]{BrennerSung2019}.
The proof that such a constant exists involves writing $\nabla \vt|_K(z)$ in terms of a local basis formed by tangent vectors of faces. However, the constant $C_{\sharp}$ in~\eqref{eq:corner_bound_issue} generally depends on the angle formed by the tangent vectors and thus on the geometry of $\Omega$, as illustrated by the following example.
Consider a corner vertex $z_0$ associated to a pair of elements $K_1$ and $K_2$ as shown in~Figure~\ref{fig:corner}, and consider a function $\vt$ such that $\vt|_{K_1}(x,y)=y- x\tan\omega$ and $\vt|_{K_2}(x,y)=y$, so that $\vt$ is piecewise affine, continuous on $K_1\cup K_2$, and vanishes on the boundary faces formed by the vertices $z_0$, $z_1$ and $z_3$.
Then, it follows that $\abs{\nabla \vt|_K(z_0)}^2 \geq 1$ for $K\in\{K_1,K_2\}$, whereas  $\int_F \hT^{1-\dim} \abs{\jump{\nabla\vt\cdot\bn}}^2 = \tan^2\omega$ for the interior face $F$ formed by the vertices $z_0$ and $z_2$. Therefore, the constant $C_{\sharp}$ in~\eqref{eq:corner_bound_issue} necessarily satisfies $C_{\sharp}\geq \tan^{-2}\omega$ and thus becomes large for small $\omega$, i.e.\ when $\Omega$ has very nearly flat corners.
Therefore, the claim in~\cite[Thm.~2.1]{BrennerSung2019} that the constants there depend only on the shape~regularity of the meshes appears to have overlooked the dependence on the geometry of the boundary.
In three space dimensions, this geometric dependence also occurs for degrees of freedom on edges belonging to two non-coplanar boundary faces.
\end{remark}

\subsection{Abstract a priori error bound}\label{sec:abstract_apriori}

We now provide a unifying framework for the \emph{a priori} error analysis of a broad family of numerical methods.
Some concrete examples of methods that we have in mind are given in Section~\ref{sec:num_schemes}, which covers a range of different methods proposed in the literature as well as some original variants, see in particular the definition in~\eqref{eq:At_def} and also Remark~\ref{rem:relation_literature} below for further details.
We consider an abstract numerical method of the form: for a chosen $s\in\{0,1\}$, find $u_\cT\in \Vt^s$ such that
\begin{equation}\label{eq:num_scheme}
\At(u_\cT;v_\cT) = 0\quad\forall v_\cT\in \Vt^s,
\end{equation}
for a given nonlinear form $\At(\cdot;\cdot)\colon \Vt^s\times\Vt^s \tends \R$.
We prove a near-best approximation result under abstract assumptions on $\At(\cdot,\cdot)$, which allows for a unified treatment of a range of numerical methods from the literature, and some original methods as well.
First, we assume that the nonlinear form $\At(\cdot;\cdot)$ is linear in its second argument, i.e.\ $\At(\wt;\vt+\delta z_{\cT})=\At(\wt;\vt)+\delta \At(\wt;z_{\cT})$ for all $\vt$, $\wt$ and $z_{\cT} \in \Vt^s$ and $\delta \in \R$.
Next, we make the following three assumptions concerning Lipschitz continuity,  discrete consistency and strong monotonicity.

\emph{Lipschitz continuity.}
The nonlinear form $\At$ is assumed to be Lipschitz continuous, i.e.\ there exists a positive constant $C_{\mathrm{Lip}}$ such that
\begin{gather}\label{eq:discrete_cont}
\begin{aligned}
\abs{\At(w_\cT;v_\cT)-\At(z_\cT;v_\cT)}\leq C_{\mathrm{Lip}}\normt{w_\cT-z_\cT} \normt{v_\cT} &&&\forall \wt,\, z_{\cT}, \, \vt \in \Vt^s.
\end{aligned}\tag{A1}
\end{gather}

\emph{Discrete consistency.}
We assume that there exists a linear operator $L_{\cT} \colon \Vt^s\tends L^2(\Om)$ and positive constants $C_{\mathrm{cons}}$ and $C_{L_{\cT}}$ and such that, for all $\wt,\,\vt\in\Vt^s$, 
\begin{gather}\label{eq:discrete_consistency}
\begin{aligned}
\left\lvert \At(\wt;\vt) - \int_{\Om}\Fg[\wt]L_{\cT} \vt \right\rvert \leq C_{\mathrm{cons}} \absJ{\wt}\normt{\vt}, &&& \norm{L_{\cT} \vt}_\Om \leq C_{L_{\cT}} \normt{\vt}.
\end{aligned}\tag{A2}
\end{gather}
We stress that the jump seminorm $\absJ{\wt}$ appears in the right-hand side of the first inequality.
Therefore, the condition~\eqref{eq:discrete_consistency} requires that $\At(\wt;\vt)$ must include the testing of the nonlinear operator $\Fg$ with $L_{\cT}\vt$ for a test function $\vt\in\Vt^s$, and that any additional terms must vanish whenever $\absJ{\wt}=0$, i.e.\ when the first argument $\wt$ belongs to $H^2(\Om)\cap H^1_0(\Om)$.
The assumption on $L_{\cT}$ is rather general and allows for testing the PDE with a range of choices, although in practice, $L_{\cT}$ is usually chosen with a view towards satisfying a strong monotonicity assumption.

\emph{Strong monotonicity.}
Finally, we assume that $\At(\cdot;\cdot)$ is strongly monotone, i.e.\ there exists a positive constant~$\Cmon$ such that 
\begin{gather}\label{eq:discrete_mono}
\begin{aligned}
\Cmon^{-1}\normt{w_\cT-v_\cT}^2 \leq \At(w_\cT;w_\cT-v_\cT)-\At(v_\cT;w_\cT-v_\cT) &&&\forall \wt,\vt\in\Vt^s.
\end{aligned}\tag{A3}
\end{gather}
In practice, strong monotonicity for DG and $C^0$-IP methods is usually attained by introducing stabilization terms and penalization terms on the jumps of the approximate solution values and gradients, and choosing the penalty parameters to be sufficiently large.
The Lipschitz continuity and strong monotonicity conditions in~\eqref{eq:discrete_cont} and~\eqref{eq:discrete_mono} are natural discrete counterparts to the Lipschitz continuity and strong monotonicity of the continuous nonlinear form $A(\cdot;\cdot)$ considered in the proof of Theorem~\ref{thm:well_posedness}.

\begin{remark}[Notion of consistency]
We call the first inequality in~\eqref{eq:discrete_consistency} \emph{discrete consistency} because it is a notion of consistency on the numerical method that is determined entirely at the discrete level.
See also~\cite{VeeserZanotti2018} for a seemingly related notion of consistency called \emph{full algebraic consistency}, which plays an important role in the analysis of abstract nonconforming methods for linear problems.
In particular, the notion of consistency employed differs from more usual notions of consistency based on inserting the exact solution~$u$ into the numerical scheme, which may be subject to additional regularity assumptions on the solution.
In practice, the discrete consistency condition~\eqref{eq:discrete_consistency} is trivially satisfied by some numerical methods, such as the one in~\cite{NeilanWu19} but is far from obvious for the original method of~\cite{SS13,SS14} owing to the additional stabilization terms.
One of our main contributions in Sections~\ref{sec:num_schemes} and~\ref{sec:proofs} below is a proof of~\eqref{eq:discrete_consistency} for the original method of~\cite{SS13,SS14} and some original variants, see in particular Theorem~\ref{thm:stab_bound} and Corollary~\ref{cor:discrete_cons}.
In all cases, our results hold without introducing any additional regularity assumptions on the exact solution.
\end{remark}

It follows from the Lipschitz continuity assumption~\eqref{eq:discrete_cont} and strong monotonicity~\eqref{eq:discrete_mono} that there exists a unique $\ut\in\Vt^s$ that solves~\eqref{eq:num_scheme}.
We now prove the main result on the \emph{a priori} error analysis of these schemes, namely a near-best approximation property akin to C\'ea's Lemma, with a constant determined solely in terms of $\dim$, $\lambda$, $\Cmon$, $C_{L_{\cT}}$ and $C_{\mathrm{cons}}$ appearing above.
Moreover, for the class of numerical methods considered below, the assumptions of our framework will be satisfied without requiring any further regularity on the exact solution.
Recall that $u\in H^2(\Om)\cap H^1_0(\Om)$ denotes the unique solution of~$F[u]=0$ and equivalently $\Fg[u]=0$ pointwise a.e.\ in $\Om$, see Theorem~\ref{thm:well_posedness}.

\begin{theorem}[Near-best approximation]\label{thm:best_approx}
Suppose that the nonlinear form $\At\colon \Vt^s\times\Vt^s\tends \R$ is linear in its second argument, and satisfies assumptions~\eqref{eq:discrete_cont},~\eqref{eq:discrete_consistency} and \eqref{eq:discrete_mono}. Let $\ut\in\Vt^s$ denote the unique solution of~\eqref{eq:num_scheme}. Then, we have the near-best approximation bound
\begin{equation}\label{eq:best_approx}
\normt{u-\ut} \leq C_{\mathrm{NB}}\inf_{\vt \in \Vt^s} \normt{u-\vt},
\end{equation}
where the constant $C_{\mathrm{NB}}$ is given by
\[
C_{\mathrm{NB}} \coloneqq 1 + \Cmon\left(C_{\mathrm{cons}}+ C_{L_{\cT}} \max\left\{1,\sqrt{2\lambda},\lambda\right\}\left(1+\sqrt{\dim+1}\right) \right).
\]
\end{theorem}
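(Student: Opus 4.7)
The plan is to estimate $\normt{u-\ut}$ by the standard triangle-inequality splitting through an arbitrary comparison function $\vt \in \Vt^s$, and then to bound the discrete part $\normt{\vt-\ut}$ using strong monotonicity, discrete consistency, the identity $\Fg[u]=0$ and the Lipschitz bound of Lemma~\ref{lem:cordes_ineq} on $\Fg$. Since the constant $C_{\mathrm{NB}}$ to be obtained is explicit, I will track constants carefully rather than using the $\lesssim$ notation.

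First I would fix $\vt \in \Vt^s$ arbitrary and apply the strong monotonicity assumption~\eqref{eq:discrete_mono} with the pair $(\vt, \ut)$, together with the Galerkin identity $\At(\ut;\vt-\ut)=0$ which follows from~\eqref{eq:num_scheme} and the linearity of $\At(\ut;\cdot)$ in its second argument. This yields
\begin{equation*}
\Cmon^{-1}\normt{\vt-\ut}^2 \leq \At(\vt;\vt-\ut) - \At(\ut;\vt-\ut) = \At(\vt;\vt-\ut).
\end{equation*}
Next I would rewrite $\At(\vt;\vt-\ut)$ using the discrete consistency assumption~\eqref{eq:discrete_consistency} to obtain
\begin{equation*}
\At(\vt;\vt-\ut) = \int_\Om \Fg[\vt]\, L_{\cT}(\vt-\ut) + R,\qquad |R| \leq C_{\mathrm{cons}} \absJ{\vt}\normt{\vt-\ut}.
\end{equation*}

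For the main volume term I would invoke $\Fg[u] = 0$ (Theorem~\ref{thm:well_posedness}) to write $\Fg[\vt] = \Fg[\vt]-\Fg[u]$, then apply the pointwise Lipschitz bound~\eqref{eq:cordes_ineq2}, integrate, and use $\absL{u-\vt} \leq c_\lambda \normt{u-\vt}$ with $c_\lambda = \max\{1,\sqrt{2\lambda},\lambda\}$, obtaining
\begin{equation*}
\norm{\Fg[\vt]}_\Om \leq (1+\sqrt{d+1})\, c_\lambda\, \normt{u-\vt}.
\end{equation*}
Combining this with the Cauchy--Schwarz inequality and the second bound in~\eqref{eq:discrete_consistency} for $\norm{L_{\cT}(\vt-\ut)}_\Om$ controls the volume term. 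For the remainder $R$, I would observe that since $u \in H^2(\Om)\cap H^1_0(\Om)$ we have $\absJ{u}=0$, whence $\absJ{\vt} = \absJ{\vt-u} \leq \normt{u-\vt}$ directly from the definition~\eqref{eq:norm_def}. Collecting the estimates and dividing through by $\normt{\vt-\ut}$ yields
\begin{equation*}
\normt{\vt-\ut} \leq \Cmon\bigl(C_{\mathrm{cons}} + C_{L_{\cT}} c_\lambda (1+\sqrt{d+1})\bigr)\normt{u-\vt}.
\end{equation*}
A final triangle inequality $\normt{u-\ut}\leq \normt{u-\vt}+\normt{\vt-\ut}$ followed by the infimum over $\vt\in \Vt^s$ produces~\eqref{eq:best_approx} with exactly the stated constant $C_{\mathrm{NB}}$.

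There is no real obstacle: the argument is essentially the nonlinear analogue of C\'ea's lemma, and every ingredient is supplied by the framework. The one step that might look delicate but is in fact immediate is the treatment of the jump seminorm $\absJ{\vt}$ that arises from the discrete consistency residual; the point is that conformity of the exact solution ($u \in H^2\cap H^1_0$) makes this term cost nothing beyond the error in the $\normt{\cdot}$ norm, so that no data oscillation or additional regularity enters the bound.
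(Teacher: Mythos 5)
Your proposal is correct and follows essentially the same argument as the paper: strong monotonicity plus the Galerkin identity, adding and subtracting $\int_\Om \Fg[\vt]L_\cT(\vt-\ut)$ so that discrete consistency controls the residual via $\absJ{\vt}=\absJ{\vt-u}\leq\normt{u-\vt}$, the Lipschitz bound~\eqref{eq:cordes_ineq2} together with $\Fg[u]=0$ for the volume term, and a final triangle inequality, yielding exactly the stated constant $C_{\mathrm{NB}}$.
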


\begin{proof}
Let $\ut$ be the unique solution of~\eqref{eq:num_scheme}, and let $\vt$ be arbitrary.
Then, writing $z_{\cT}\coloneqq \vt-\ut$, we see from~\eqref{eq:num_scheme},~\eqref{eq:discrete_consistency} and \eqref{eq:discrete_mono} that
\[
\begin{split}
\Cmon^{-1}\normt{\vt-\ut}^2 = \Cmon^{-1}\normt{z_{\cT}}^2 &\leq \At(\vt;z_{\cT}) - \At(\ut;z_{\cT}) = \At(\vt;z_{\cT})
\\ & = \At(\vt;z_{\cT}) - \int_\Om \Fg[\vt]L_{\cT}z_{\cT} + \int_{\Om} \left(\Fg[\vt] - \Fg[u] \right) L_{\cT}z
\\ & \leq C_{\mathrm{cons}} \absJ{\vt-u}\normt{z_{\cT}} + \norm{\Fg[\vt]-\Fg[u]}_\Om C_{L_{\cT}}\normt{z_{\cT}}
\\ & \leq C_{\mathrm{cons}}\absJ{\vt-u}\normt{z_{\cT}} + c_{\dim,\lambda} C_{L_{\cT}} \normt{\vt-u}\normt{z_{\cT}},
\end{split}
\]
where in the second line we have added and subtracted $\int_\Om \Fg[\vt]L_{\cT}z_{\cT}$ and we have used the fact that $\Fg[u]=0$ a.e.\ in $\Om$, then in the third line we have used the discrete consistency bound~\eqref{eq:discrete_consistency} with the identity $\absJ{\vt}=\absJ{\vt-u}$, along with the Cauchy--Schwarz inequality, and in the fourth line we have $c_{\dim,\lambda}\coloneqq\max\{1,\sqrt{2\lambda},\lambda\}(1+\sqrt{\dim+1})$ which is obtained by bounding the right-hand side of~\eqref{eq:cordes_ineq2}.
We then deduce from the triangle inequality and $\absJ{\vt-u}\leq \normt{\vt-u}$ that
\begin{equation}
\normt{u-\ut}\leq \normt{u-\vt}+\normt{\vt-\ut} \leq \left[1 + \Cmon\left(C_{\mathrm{cons}}+c_{\dim,\lambda}C_{L_{\cT}} \right) \right]\normt{u-\vt},
\end{equation}
This proves~\eqref{eq:best_approx} upon taking the infimum over all $\vt\in\Vt^s$.
\end{proof}

Theorem~\ref{thm:best_approx} and the general framework introduced above can be easily extended to methods using a wide range of approximations spaces, and not only the space $\Vt^s$ considered here.

Note that the near-best approximation property given by~Theorem~\ref{thm:best_approx} is rather remarkable given the fact we consider here nonconforming methods, and is again primarily a consequence of the fact that $\Fg[u]=0$ in the strong sense. 
Whereas the quasi-optimality of \emph{conforming} Galerkin approximations of strongly monotone operator equations is classical, c.f.\ Section 25.4 of~\cite{Zeidler1990}, it is well-known that the analysis of near-best approximation properties for \emph{nonconforming} methods is rather more challenging~\cite{Gudi2010,VeeserZanotti2018}.
We refer the reader to~\cite{VeeserZanotti2018} for a detailed analysis of quasi-optimality for nonconforming methods for abstract linear problems.
Note that even in the case of linear divergence form elliptic problems, classical DG and other nonconforming methods often do not satisfy a near-best approximation property~\cite[Remark 4.9]{VeeserZanotti2018}, with the closest available results typically including additional terms on the right-hand side~\cite{Gudi2010}.

Theorem~\ref{thm:best_approx} implies that, up to associated constants, all numerical methods satisfying the assumptions of the above framework are quasi-optimal.
Provided that the constants in the assumptions~\eqref{eq:discrete_cont},~\eqref{eq:discrete_consistency} and~\eqref{eq:discrete_mono} are independent of the mesh-size, it is then easy to show optimal rates of convergence with respect to the mesh-size under additional regularity assumptions on the exact solution. Since the techniques for deriving convergence rates are rather well-known, we leave the details to the reader.

Also under the assumption that the constants in the framework above remain uniformly bounded, Theorem~\ref{thm:best_approx} then leads to convergence of the numerical solutions in the small-mesh limit without any additional regularity assumptions on $u \in H^2(\Om)\cap H^1_0(\Om)$.

\begin{corollary}[Convergence for minimal regularity solutions]\label{cor:convergence}
Let $\{\cT_k\}_{k=1}^\infty$ be a sequence of conforming simplicial meshes such that $\max_{K\in\cT_k}h_K \tends 0$ as $k\tends \infty$, and let $u_{\cT_k} \in V_{\cT_k}^s$ denote the corresponding numerical solution of~\eqref{eq:num_scheme} for each $k\in\N$.
Suppose that, for each $k\in\N$, the nonlinear form~$A_{\cT_k}(\cdot;\cdot)$ is linear in its second argument and satisfies the assumptions~\eqref{eq:discrete_cont}, \eqref{eq:discrete_consistency}, and \eqref{eq:discrete_mono} with associated constants that are uniformly bounded with respect to $k\in \N$.
Then the sequence of numerical solutions~$\{u_{\cT_k}\}_{k\in\N}$ converges to $u \in H^2(\Om)\cap H^1_0(\Om)$ the exact solution of~\eqref{eq:isaacs_pde} with
\begin{equation}\label{eq:convergence}
\lim_{k\tends \infty}\norm{u-u_{\cT_k}}_{\cT_k} =0.
\end{equation}
\end{corollary}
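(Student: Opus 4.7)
The overall plan is to reduce the problem to a best-approximation statement via Theorem~\ref{thm:best_approx} and then exhibit a convergent approximating sequence. Applying Theorem~\ref{thm:best_approx} at each level $k$ gives
\begin{equation*}
\norm{u - u_{\cT_k}}_{\cT_k} \leq C_{\mathrm{NB},k}\inf_{v_k \in V_{\cT_k}^s} \norm{u - v_k}_{\cT_k},
\end{equation*}
and the hypothesis of uniformly bounded constants in~\eqref{eq:discrete_cont},~\eqref{eq:discrete_consistency}, \eqref{eq:discrete_mono} ensures $\sup_k C_{\mathrm{NB},k} < \infty$. Thus it suffices to construct, for each $k$, some $v_k \in V_{\cT_k}^1 \subseteq V_{\cT_k}^s$ with $\norm{u - v_k}_{\cT_k} \to 0$.

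For the construction I would use a smooth-approximation-plus-interpolation argument. Given $\epsilon > 0$, first pick $\phi \in C^\infty(\overline\Omega) \cap H^1_0(\Omega)$ with $\norm{u - \phi}_{H^2(\Omega)} < \epsilon$, invoking density of such smooth functions in $H^2(\Omega) \cap H^1_0(\Omega)$. Since $u,\phi \in H^2(\Omega) \cap H^1_0(\Omega)$ have vanishing jumps on all faces, $\norm{u - \phi}_{\cT_k} = \norm{u - \phi}_{H^2(\Omega)} < \epsilon$ independently of $k$. Next, let $I_k \phi \in V_{\cT_k}^1$ denote the nodal Lagrange interpolant of degree $p \geq 2$. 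The standard elementwise estimates $\norm{\phi - I_k \phi}_{H^m(K)} \lesssim h_K^{p+1-m}\norm{\phi}_{H^{p+1}(K)}$ for $m \in \{0,1,2\}$ sum to $\norm{\phi - I_k \phi}_{H^2(\Omega)} \to 0$. Because $I_k \phi \in H^1_0(\Omega)$, its value-jumps vanish on every face; on interior faces, $\jump{\nabla I_k\phi} = -\jump{\nabla(\phi - I_k\phi)}$ since $\phi$ is smooth, and a trace inequality combined with the interpolation bounds gives $\int_{\cF^I} h_{\cT_k}^{-1}\abs{\jump{\nabla(\phi - I_k\phi)}}^2 \lesssim (\max_K h_K)^{2(p-1)} \norm{\phi}^2_{H^{p+1}(\Omega)} \to 0$. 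The triangle inequality then yields $\norm{u - I_k\phi}_{\cT_k} < 2\epsilon$ for $k$ sufficiently large, establishing $\inf_v \norm{u - v}_{\cT_k} \to 0$.

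The main technical subtlety is the density of $C^\infty(\overline\Omega) \cap H^1_0(\Omega)$ in $H^2(\Omega) \cap H^1_0(\Omega)$ on a convex polytope: standard mollification does not preserve the homogeneous Dirichlet trace, and boundary regularity of the Poisson problem is limited by the corner angles. This step requires some care, for instance via a scaling-and-mollification argument exploiting convexity, or by replacing the smooth-approximation step with a directly constructed $H^2$-stable quasi-interpolant of Scott--Zhang type defined on $H^2(\Omega) \cap H^1_0(\Omega)$, combined with density on a more tractable subspace and controlling the jump terms as above. In either case, the core of the argument --- the reduction to best-approximation via Theorem~\ref{thm:best_approx} --- remains unchanged, and uniformity in $k$ is inherited directly from the uniform bounds on the framework constants.
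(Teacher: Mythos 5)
Your first step --- invoking Theorem~\ref{thm:best_approx} with the uniformly bounded framework constants to reduce everything to showing $\inf_{v_k\in V_{\cT_k}^s}\norm{u-v_k}_{\cT_k}\tends 0$ --- is exactly the paper's reduction. The gap is in your construction of the approximating sequence: it rests on the density of $C^\infty(\overline\Om)\cap H^1_0(\Om)$ in $H^2(\Om)\cap H^1_0(\Om)$ with respect to the $H^2$-norm, which you flag but do not establish, and this is precisely the hard point. It is not a soft fact: every $\phi\in C^\infty(\overline\Om)$ vanishing on $\p\Om$ has $\nabla\phi=0$ at each corner (and, for $d=3$, $\nabla\phi$ tangent to each boundary edge), so the approximation must be argued corner-by-corner, typically via the asymptotics of $H^2$-regular solutions near convex corners and edges together with a cut-off argument. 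Your proposed repairs do not close this: plain mollification destroys the Dirichlet trace, and translation or dilation toward the interior moves the zero set off $\p\Om$, so a ``scaling-and-mollification'' argument does not go through as stated; likewise, an $H^2$-stable quasi-interpolant on $H^2(\Om)\cap H^1_0(\Om)$ preserving the boundary condition and with vanishing error in the jump-penalized norm is essentially the object one needs to build, not a known black box. Note also that you cannot drop the boundary condition on $\phi$: for a smooth $\phi$ not vanishing on $\p\Om$, the boundary terms $\int_{\cF^B}\hT^{-3}\abs{\jump{u-I_k\phi}}^2$ blow up as $k\tends\infty$, so the density statement is genuinely load-bearing in your route.

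The paper avoids this issue entirely by approximating locally rather than globally: on each $K\in\cT_k$ take the piecewise quadratic $v_k$ matching the element averages of $u$, $\nabla u$ and $\nabla^2 u$ (admissible since $p\geq 2$). Repeated Poincar\'e inequalities and trace inequalities then give $\norm{u-v_k}_{\cT_k}^2\lesssim\sum_{K\in\cT_k}\int_K\abs{\nabla^2u-\overline{\nabla^2u}|_K}^2$, and this oscillation term tends to zero using only the density of smooth matrix-valued fields in $L^2(\Om;\R^{\dim\times\dim})$ applied to $\nabla^2 u$ --- a standard fact, requiring no boundary-condition-preserving smooth approximation and no extra regularity of $u$. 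For $s=1$ one post-processes with the $H^1_0$-enrichment $E_1$ and the bound~\eqref{eq:enrichment_operator_1}. I recommend you either adopt this local-oscillation construction or supply a complete proof (with a precise reference) of the density statement on convex polytopes in both $d=2$ and $d=3$; as written, your argument is incomplete at its decisive step.
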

\begin{proof}
We prove this in two steps, first for $s=0$ and then for $s=1$.

\emph{Step~1.} Suppose momentarily that $s=0$. Under the above hypotheses that the constants in~\eqref{eq:discrete_cont}, \eqref{eq:discrete_consistency}, and \eqref{eq:discrete_mono} are uniformly bounded with respect to $k\in\N$, we infer from~\eqref{eq:best_approx} that it is enough to show that there exists a sequence of functions $v_k \in V^0_{\cT_k}$ such that $\norm{u-v_k}_{\cT_k}\tends 0$ as $k\tends \infty$.
For each $k\in\N$, we define $v_k\in V_{\cT_k}^0$ as the unique piecewise quadratic polynomial that satisfies $\int_K(u-v_k)=0$, $\int_K\nabla(u-v_k)=0$ and $\int_K\nabla^2(u-v_k) =0$ for all $K\in\cT_k$, where integration is taken component-wise for vectors and matrices. Note here that $p\geq 2$ implies that $v_k\in V_{\cT_k}^0$.
In particular, it is easily checked that an explicit formula for $v_k|_K$ is given by $v_k|_K(x) = r + \bm{d}\cdot x + \tfrac{1}{2} x^\top\bm{H} x $ for all $x\in K$, with coefficients $r\in \R$, $\bm{d}\in \R^\dim$ and $\bm{H}\in\R^{\dim\times\dim}$, with $\bm{H}=\overline{\nabla^2 u}|_K$, $\bm{d}=\overline{\nabla u - \bm{H} x}|_K$ and $r = \overline{u - \bm{d}{\cdot}x - \tfrac{1}{2}x^\top \bm{H}x}|_K $ where $\overline{w}|_K$ denotes the mean-value of a scalar-, vector- or matrix-valued function $w$ over $K$.
It then follows from repeated applications of Poincar\'e's inequality that $\int_K \hT^{2m-4}\abs{\nabla^m(u-v_k)}^2 \lesssim \int_K \abs{\nabla^2 u- \overline{\nabla^2 u}|_K}^2 $ for each $m\in\{0,1,2\}$, for all $K\in\cT_k$ and for all $k\in\N$.
Using trace inequalities to bound the jump-seminorms $\abs{v_k}_{J,\cT_k}=\abs{u-v_k}_{J,\cT_k}$ , we then see that $\norm{u-v_k}_{\cT_k}^2 \lesssim \sum_{K\in\cT_k} \int_K\abs{\nabla^2 u- \overline{\nabla^2 u}|_K}^2 $ for all $k\in\N$.
It then follows from density of the space $C^\infty_0(\Om;\R^{\dim\times\dim})$ of smooth compactly supported $\R^{\dim\times\dim}$-valued functions in $L^2(\Om;\R^{\dim\times\dim})$ that $\sum_{K\in\cT_k} \int_K\abs{\nabla^2 u- \overline{\nabla^2 u}|_K}^2 \tends 0$ as $k\tends \infty$ and hence also that $\norm{u-v_k}_{\cT_k}\tends 0$ as $k\tends \infty$.
This implies~\eqref{eq:convergence} for $s=0$.

\emph{Step~2.} For $s=1$, let $v_k \in V_{\cT_k}^0$ define the above piecewise quadratic approximation, and let $\widetilde{v}_k = E_1 v_k \in V^1_{\cT_k}$ denote its $H^1_0$-conforming enrichment, where it is recalled that $E_1$ is as in the proof of Theorem~\ref{thm:poincare}; in a slight abuse of notation, we do not indicate here the dependence of $E_1$ on $k$.
It is straightforward then to use triangle inequalities and the bound~\eqref{eq:enrichment_operator_1} to show that $\norm{u-\widetilde{v}_k}_{\cT_k}\tends 0$ as $k\tends \infty$, thus showing~\eqref{eq:convergence} also in the case $s=1$.
\end{proof}

\section{Application to a family of numerical methods}\label{sec:num_schemes}

We now consider how the abstract framework for analysis in the sections above applies to a family of numerical methods that includes as special cases the methods of~\cite{SS13,SS14,NeilanWu19} as well as some original methods which are studied further in the context of adaptive methods in~\cite{KaweckiSmears20adapt}.

\paragraph{{\bfseries Lifting operators.}} Let $q$ denote a fixed choice of polynomial degree such that $q\geq p-2$, which implies that $q\geq 0$ since $p\geq 2$. 
Let $V_{\cT,q} \coloneqq  \{w \in L^2(\Om)\colon w|_K \in \mathbb{P}_q \;\forall K \in\cT\}$ denote the space of piecewise polynomials of degree at most $q$ over $\cT$.
For each interior face $F\in\cF^I$, we define the lifting operator $r_{\cT}^F\colon L^2(F)\tends V_{\cT,q}$  by $\int_\Om r_{\cT}^F(w) \varphi = \int_F w \{\varphi\}$ for all $\varphi \in V_{\cT,q}$ and all $w\in L^2(F)$.
 Using an inverse inequality for polynomials, it is easy to see that $\norm{r^F_{\cT}(w)}_\Om\lesssim h_F^{-1/2} \norm{w}_F$ for any $w\in L^2(F)$.

For a fixed choice of a parameter $\chi\in\{0,1\}$, we define the linear operators $\Dmut \colon \Vt^s\tends L^2(\Om)$ and $r_{\cT}\colon \Vt^s\tends V_{\cT,q}$
 \begin{equation}\label{eq:lift_lapl_def}
\Dmut \vt\coloneqq \Delta \vt - \chi r_{\cT}(\jump{\nabla \vt\cdot\bn}), \quad r_{\cT}(\jump{\nabla \vt\cdot\bn})\coloneqq \sum_{F\in\cF^I}r^F_{\cT}(\jump{\nabla \vt\cdot \bn})\quad \forall \vt\in\Vt^s,
\end{equation}
In order to alleviate the notation, we do not write explicitly the dependence of $\Dmut$ on the parameter $\chi$.
If $\chi=0$ then $\Dmut \vt$ coincides with the piecewise Laplacian of $\vt$, whereas if $\chi=1$ then $\Dmut \vt$ is usually called the lifted Laplacian.
The choice $\chi=1$ is useful for proving asymptotic consistency of the numerical schemes in the context of adaptive methods, see~\cite{KaweckiSmears20adapt}. 
It is straightforward to show that 
\begin{equation}\label{eq:lifting_bound}
\begin{aligned}
\norm{ r_{\cT}(\jump{\nabla \vt\cdot\bn}) }_\Om \lesssim \absJ{\vt}, &&& \norm{\Dmut \vt}_{\Om} \lesssim \normt{\vt} &&&\forall \vt\in\Vt^s,
\end{aligned}
\end{equation}
where the constants depend only on $\dim$, $p$, $q$ and $\shapereg$, see e.g.\ \cite[Section~4.3]{DiPietroErn12}.

\paragraph{{\bfseries Stabilization.}}
Let the stabilization bilinear form $S_{\cT}\colon \Vt^s\times\Vt^s\tends \R$ be defined by
\begin{equation}\label{eq:S_def}
\begin{split}
\St (\wt,\vt)\coloneqq &\int_\Om \left[ \Dpw \wt:\Dpw \vt - \Delta \wt\Delta \vt\right]
\\ &-  \int_{\cF} \left[ \Npw_T\avg{\Npw \wt\cdot \bn} \cdot \jump{\Npw_T \vt} + \Npw_T\avg{\Npw \vt\cdot \bn} \cdot \jump{\Npw_T \wt} \right]
\\ &+ \int_{\cF^I} \left[\avg{\Delta_T \wt} \jump{\Npw \vt\cdot \bn} + \avg{\Delta_T \vt} \jump{\Npw \wt\cdot \bn} \right] 
 \quad \forall \wt,\,\vt\in\Vt^s,
\end{split}
\end{equation}
where it is recalled that $\nabla_T$ and $\Delta_T$ denote the tangential gradient and Laplacian, respectively, on mesh faces.
We now show that the stabilization form $\St(\cdot,\cdot)$ is equivalent to the stabilization terms that were used in~\cite{SS13,SS14}. In particular, let $B_{\cT,*}(\cdot,\cdot)\colon \Vt^s\times\Vt^s\tends \R$ be the bilinear form introduced in~\cite{SS13,SS14}, defined by
\begin{equation}\label{eq:Bstar_def}
\begin{split}
B_{\cT,*}(w_\cT,v_\cT)  \coloneqq & \int_\Om\left[ \nabla^2 \wt:\nabla^2 \vt+2\lambda\nabla \wt\cdot\nabla \vt+\lambda^2w_\cT v_\cT\right]
\\ &-  \int_{\cF} \left[ \Npw_T\avg{\Npw \wt\cdot \bn} \cdot \jump{\Npw_T \vt} + \Npw_T\avg{\Npw \vt\cdot \bn} \cdot \jump{\Npw_T \wt} \right]
\\ & + \int_{\cF^I} \left[\avg{\Delta_T \wt} \jump{\Npw \vt\cdot \bn} + \avg{\Delta_T \vt} \jump{\Npw \wt\cdot \bn} \right]
\\ &-\lambda\int_{\cF}\left[\avg{\Dp{w_\cT}\cdot\bn}\jump{v_\cT}+\avg{\Dp{v_\cT}\cdot\bn}\jump{w_\cT}\right]
\\ & -\lambda\int_{\cF^I}\left[\avg{w_\cT}\jump{\Dp{v_\cT}\cdot\bn}+\avg{v_\cT}\jump{\Dp{w_\cT}\cdot\bn }\right].
\end{split}
\end{equation}

The following Lemma shows that the stabilization used in~\cite{SS14} can be equivalently simplified to the stabilization form $\St(\cdot,\cdot)$ defined above in~\eqref{eq:S_def}. 

\begin{lemma}\label{lem:stab_identity}
Let the bilinear forms $\St(\cdot,\cdot)$ and $B_{\cT,*}(\cdot,\cdot)$ be defined by~\eqref{eq:S_def} and~\eqref{eq:Bstar_def}. Then, we have the identity 
\begin{equation}\label{eq:stab_identity}
\begin{aligned}
\St (\wt,\vt) = B_{\cT,*}(\wt,\vt) - \int_{\Om} L_{\lambda} \wt L_{\lambda} \vt &&& \forall \wt,\,\vt\in\Vt^s.
\end{aligned}
\end{equation}
\end{lemma}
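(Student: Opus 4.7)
The plan is to verify the identity~\eqref{eq:stab_identity} by direct algebraic comparison of the defining expressions of $\St(\cdot,\cdot)$ and $B_{\cT,*}(\cdot,\cdot)$, using a discrete integration-by-parts formula to handle the $\lambda$-weighted terms in $B_{\cT,*}$.

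First I would observe that the face integrals involving tangential gradients and tangential Laplacians appearing in the second and third lines of~\eqref{eq:S_def} are identical to those in the second and third lines of~\eqref{eq:Bstar_def}. Hence these terms cancel in $B_{\cT,*}(\wt,\vt)-\St(\wt,\vt)$, and the identity to be established reduces to
\begin{equation*}
\int_\Om\bigl[\Delta \wt\,\Delta \vt + 2\lambda \Npw \wt\cdot \Npw \vt + \lambda^2\wt \vt\bigr] - \lambda I_{\cT}(\wt,\vt) = \int_\Om \LL \wt\,\LL \vt,
\end{equation*}
where $I_{\cT}(\wt,\vt)$ collects the last two lines of~\eqref{eq:Bstar_def}. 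Expanding $\LL \wt\,\LL \vt = (\Delta \wt-\lambda \wt)(\Delta \vt - \lambda \vt)$ and cancelling the common $\Delta\wt\Delta\vt$ and $\lambda^2\wt\vt$ contributions, this further reduces to showing
\begin{equation*}
2\lambda\int_\Om \Npw \wt\cdot\Npw \vt - \lambda I_{\cT}(\wt,\vt) = -\lambda\int_\Om (\wt\,\Delta \vt + \vt\,\Delta \wt).
\end{equation*}
The case $\lambda=0$ is then trivial, so we may assume $\lambda>0$ and divide through.

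The key step is a discrete integration-by-parts formula for piecewise-smooth functions in $\Vt^s$. I would apply Green's identity on each element $K\in\cT$ and sum over the mesh to obtain
\begin{equation*}
\int_\Om \Npw \wt\cdot\Npw \vt = -\int_\Om \wt\,\Delta \vt + \sum_{K\in\cT}\int_{\p K} \wt\,(\Npw \vt\cdot\bn_K),
\end{equation*}
and then rewrite the boundary sum as a sum over faces using the decomposition $\jump{ab} = \avg{a}\jump{b}+\jump{a}\avg{b}$, treating boundary faces via the one-sided conventions in~\eqref{eq:jumpavg}. This yields
\begin{equation*}
\int_\Om \Npw \wt\cdot\Npw \vt = -\int_\Om \wt\,\Delta \vt + \int_{\cF^I}\avg{\wt}\jump{\Npw \vt\cdot\bn}+\int_{\cF}\jump{\wt}\avg{\Npw \vt\cdot\bn}.
\end{equation*}
Writing the symmetric identity with $\wt,\vt$ interchanged and adding the two produces exactly the face combination $I_{\cT}(\wt,\vt)$ on the right-hand side, matching the required equality.

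The only real obstacle is careful bookkeeping: correctly pairing interior-face traces from neighbouring elements so that the product $\wt(\Npw \vt\cdot\bn_K)$ reorganises into $\avg{\wt}\jump{\Npw \vt\cdot\bn}+\jump{\wt}\avg{\Npw \vt\cdot\bn}$ with signs consistent with the fixed choice of $\bn_F$, and distinguishing the summation ranges $\cF^I$ versus $\cF$ so that boundary faces contribute only through the $\jump{\wt}\avg{\Npw \vt\cdot\bn}$ term. There is no analytic difficulty beyond this.
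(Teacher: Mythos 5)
Your proof is correct and follows essentially the same route as the paper: expand $L_\lambda w_{\cT}\,L_\lambda v_{\cT}$, note the common face terms cancel, and use the elementwise integration-by-parts identity (the paper's~\eqref{eq:lap_identity}) twice, once with $w_{\cT}$ and $v_{\cT}$ interchanged, to absorb the $\lambda$-weighted face and volume terms. The only difference is that you derive the discrete Green's identity from scratch via the $\jump{ab}=\avg{a}\jump{b}+\jump{a}\avg{b}$ splitting, whereas the paper simply cites it; your bookkeeping of interior versus boundary faces is consistent with the conventions in~\eqref{eq:jumpavg}.
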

\begin{proof}
After expanding $L_\lambda \wt L_\lambda \vt=\Delta \wt\Delta \vt - \lambda \Delta \wt \vt - \lambda \wt\Delta \vt + \lambda ^2 \wt\vt$, we see that the identity in~\eqref{eq:stab_identity} follows straightforwardly from the integration-by-parts identity
\begin{equation}\label{eq:lap_identity}
\begin{aligned}
-\int_{\Om} \Delta \wt \vt = \int \nabla \wt\cdot \nabla \vt 
 - \int_{\cF} \avg{\nabla \wt\cdot \bn }\jump{\vt} - \int_{\cF^I} \jump{\nabla \wt\cdot\bn}\avg{\vt} &&&\forall \wt,\vt\in \Vt^0,
\end{aligned}
\end{equation}
which is used twice, once as above and once with $\wt$ and $\vt$ interchanged, in order to cancel all terms involving $\lambda$ in the right-hand side of~\eqref{eq:stab_identity}.
\end{proof}

Lemma~\ref{lem:stab_identity} shows that the stabilization terms used in~\cite{SS14} for $\lambda$ possibly nonzero in fact coincides with the stabilization term used below in~\eqref{eq:At_def} that defines the nonlinear form $\At(\cdot;\cdot)$. Therefore, in practice, the method in~\cite{SS13,SS14} only requires the implementation of the terms of the stabilization form~$\St(\cdot,\cdot)$.

\paragraph{{\bfseries Penalization.}}

For two positive constant parameters $\sigma$ and $\rho$ to be chosen later, let the jump penalization bilinear form $\Jpen\colon \Vt^s\times\Vt^s\tends \R$ be defined by
\begin{equation}\label{eq:jump_pen_def}
\begin{aligned}
\Jpen(\wt,\vt)\coloneqq \int_{\cF^I} \sigma \hT^{-1} \jump{\Npw \wt}\cdot\jump{\Npw \vt} + \int_{\cF^B} \sigma \hT^{-1}\jump{\nabla_T \wt}\cdot\jump{\nabla_T \vt} + \int_{\cF} \rho \hT^{-3}\jump{\wt}\jump{\vt},
\end{aligned}
\end{equation}
for all $\wt$, $\vt\in\Vt^s$, where it is recalled that $\nabla_T$ denotes the tangential gradient on mesh faces.

\begin{remark}[Penalization of jumps of tangential gradients]
The bilinear form $\Jpen(\cdot,\cdot)$ includes terms that penalize tangential jumps of the solution on interior and boundary faces.
For fixed polynomial degrees, it is straightforward to show that the jump penalization bilinear form $\Jpen(\cdot,\cdot)$ induces a semi-norm that is equivalent to $\absJ{\cdot}$, up to constants depending on the penalty paramters $\sigma$ and $\rho$.
However, the benefit of the terms that penalize explicitly the jumps in tangential components of the gradients in the numerical scheme is that it significantly improves the dependence of the penalty parameters on the polynomial degrees, in particular $\rho$, which is essential for avoiding a degradation of the rate of convergence with respect to polynomial degrees in the context of $hp$-version methods, and it also helps to improve the conditioning of the systems, see the analysis in~\cite{SS13,SS14}.
Thus the inclusion of explicit penalization of the jumps of tangential components is advantageous in computational practice even though it is not strictly necessary for an analysis that is not explicit in the polynomial degrees.
Note however that for $C^0$-IP methods, i.e.\ when $s=1$, then the last two terms in~\eqref{eq:jump_pen_def} vanish identically.
\end{remark}

\paragraph{{\bfseries Numerical methods.}}
Recalling the operator $\Dmut$ from~\eqref{eq:lift_lapl_def}, we define the linear operator 
\begin{equation}
\begin{aligned}
\LLt \vt \coloneqq \Dmut \vt - \lambda \vt &&&\forall \vt\in\Vt^s.
\end{aligned}
\end{equation}
As above, we do not indicate explicitly the dependente of $\LLt$ on $\chi$ in order to alleviate the notation.
We now consider the following family of numerical methods: for a parameter $\theta \in [0,1]$, define the nonlinear form
\begin{equation}\label{eq:At_def}
\begin{aligned}
\At(\wt;\vt)\coloneqq \int_{\Om}\Fg[\wt] L_{\lambda,\cT} \vt + \theta \St(\wt,\vt) + \Jpen(\wt,\vt) &&& \forall \wt,\,\vt\in\Vt^s.
\end{aligned}
\end{equation}
For simplicity of notation, we do not write explicitly the dependence of $\At(\cdot;\cdot)$ on the parameters $\lambda$, $\theta$, $\chi$, $\sigma$, $\mu$ and $\rho$, nor on choice of approximation space through $s\in\{0,1\}$ and the polynomial degrees $p$ and $q$.
The discrete problem is then to find $\ut\in\Vt^s$ that solves~\eqref{eq:num_scheme}.

\begin{remark}[Relation to methods in the literature]\label{rem:relation_literature}
Choosing $s=0$, $\chi=0$ and $\theta=1/2$, we obtain the original DGFEM proposed in~\cite{SS13,SS14}, see~Lemma~\ref{lem:stab_identity} concerning the equivalence of the stabilization terms.
If we take $s=1$, and $\chi=\theta=0$, then we obtain the $C^0$-interior penalty FEM proposed in~\cite{NeilanWu19}, and further analysed in~\cite{Kawecki19c}.
Methods using $\chi=1$ are of interest in the context of adaptive methods, see~\cite{KaweckiSmears20adapt}.
Note however that the general framework of Section~\ref{sec:framework} applies to some methods not directly covered by the class of methods of this section, such as one of the two methods proposed in~\cite{Bleschmidt19}, which involves a $C^0$-IP method featuring a Hessian recovery into discontinuous piecewise polynomials for both trial and test functions.
\end{remark}

We now state the main results that show that the family of numerical methods considered above satisfy the assumptions~\eqref{eq:discrete_cont},~\eqref{eq:discrete_consistency} and~\eqref{eq:discrete_mono}  of the abstract framework for \emph{a priori error analysis}.

\paragraph{{\bfseries Lipschitz continuity.}}
Using the same techniques as in~\cite{SS13,SS14} and using Lemma~\ref{lem:cordes_ineq}, it can be shown that the nonlinear form $\At(\cdot;\cdot)$ defined in~\eqref{eq:At_def} satisfies the Lipschitz continuity bound~\eqref{eq:discrete_cont}.
In particular, Lemma~\ref{lem:cordes_ineq} improves on~\cite{SS13,SS14} by showing that the Lipschitz constant is otherwise independent of the data of the operators $L^{\ab}$.

\begin{lemma}[Lipschitz continuity]\label{lem:lipschitz}
The nonlinear form $\At(\cdot;\cdot)$ defined by~\eqref{eq:At_def} satisfies~\eqref{eq:discrete_cont} with a constant $C_{\mathrm{Lip}}$ that depends only on $\dim$, $\shapereg$, $p$, $q$, $\lambda$, $\sigma$ and $\rho$. 
\end{lemma}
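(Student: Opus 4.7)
The nonlinear form $\At(\cdot;\cdot)$ decomposes naturally into three contributions: the term $\int_\Om \Fg[\wt]\LLt \vt$ built from the nonlinear operator, the stabilization form $\theta\St(\wt,\vt)$, and the penalization form $\Jpen(\wt,\vt)$. Since $\St$ and $\Jpen$ are bilinear, their increments satisfy $\St(\wt,\vt)-\St(z_{\cT},\vt)=\St(\wt-z_{\cT},\vt)$ and $\Jpen(\wt,\vt)-\Jpen(z_{\cT},\vt)=\Jpen(\wt-z_{\cT},\vt)$, so the Lipschitz bound for these pieces reduces to continuity bounds. The nonlinear piece is handled separately via Lemma~\ref{lem:cordes_ineq}.

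For the nonlinear term, I would use linearity of $\LLt$ in its argument and the Cauchy--Schwarz inequality to write
\[
\left|\int_\Om (\Fg[\wt]-\Fg[z_{\cT}])\LLt\vt\right| \leq \norm{\Fg[\wt]-\Fg[z_{\cT}]}_\Om \norm{\LLt\vt}_\Om.
\]
The first factor is bounded by $(1+\sqrt{\dim+1})\absL{\wt-z_{\cT}}$ using \eqref{eq:cordes_ineq2}, which in turn is $\leq c_\lambda(1+\sqrt{\dim+1})\normt{\wt-z_{\cT}}$ where $c_\lambda=\max\{1,\sqrt{2\lambda},\lambda\}$. For the second factor, $\norm{\LLt\vt}_\Om \leq \norm{\Dmut\vt}_\Om+\lambda\norm{\vt}_\Om \lesssim (1+\lambda)\normt{\vt}$ by \eqref{eq:lifting_bound}. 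Importantly, this yields a constant depending only on $\dim$, $\lambda$ and $\nu$ but otherwise independent of the coefficients $a^\ab,b^\ab,c^\ab$, which is the improvement over \cite{SS13,SS14}.

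For the stabilization term $\St$ defined in \eqref{eq:S_def}, I would apply Cauchy--Schwarz to each of its five pairings. The volume pairing $\int_\Om[\Dpw\wt{:}\Dpw\vt-\Delta\wt\Delta\vt]$ is controlled directly by $\norm{\Dpw\wt}_\Om\norm{\Dpw\vt}_\Om\lesssim \normt{\wt}\normt{\vt}$. For each face pairing, standard discrete trace and inverse inequalities on polynomial functions give
\[
\norm{h_F^{1/2}\nabla_T\avg{\nabla \wt\cdot\bn}}_F \lesssim \norm{\Dpw\wt}_{\omega_F}, \qquad \norm{h_F^{3/2}\avg{\Delta_T\wt}}_F \lesssim \norm{\Dpw\wt}_{\omega_F},
\]
where $\omega_F$ is the patch of elements sharing $F$. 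Multiplying by the dual weight $h_F^{-1/2}$ in $\jump{\nabla_T \vt}$ or $\jump{\nabla \vt\cdot\bn}$, Cauchy--Schwarz on $F$ produces bounds of the form $\normt{\wt}\absJ{\vt}\leq\normt{\wt}\normt{\vt}$. For $\Jpen$, Cauchy--Schwarz applied directly to \eqref{eq:jump_pen_def} gives $\abs{\Jpen(\wt-z_{\cT},\vt)}\leq \max(\sigma,\rho)\absJ{\wt-z_{\cT}}\absJ{\vt}\leq \max(\sigma,\rho)\normt{\wt-z_{\cT}}\normt{\vt}$.

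The main obstacle is not any single estimate but the bookkeeping required to assemble face-by-face bounds for $\St$ into a clean global estimate, tracking how the shape-regularity parameter $\shapereg$ and the polynomial degrees $p,q$ enter through the discrete trace and inverse inequalities. Collecting all contributions and summing over elements and faces then yields the stated bound with $C_{\mathrm{Lip}}$ depending only on $\dim$, $\shapereg$, $p$, $q$, $\lambda$, $\sigma$ and $\rho$.
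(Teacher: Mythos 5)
Your proposal is correct and follows essentially the same route as the paper's proof: exploit linearity in the second argument, bound the nonlinear increment with~\eqref{eq:cordes_ineq2} and the bound $\norm{\LLt\vt}_\Om\lesssim\normt{\vt}$ from~\eqref{eq:lifting_bound}, and control $\St$ and $\Jpen$ by trace/inverse inequalities and Cauchy--Schwarz. One tiny slip: the Lipschitz constant coming from~\eqref{eq:cordes_ineq2} is $(1+\sqrt{\dim+1})$ and involves no dependence on $\nu$, so the nonlinear term contributes a constant depending only on $\dim$ and $\lambda$.
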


\begin{proof}
Let $\wt$, $\zt$ and $\vt\in\Vt^s$ be arbitrary. Then, using~\eqref{eq:cordes_ineq2} for the nonlinear terms, using~\eqref{eq:lifting_bound} for the lifting terms, and applying inverse inequalities to the face terms in the bilinear form $\St(\cdot,\cdot)$, it is found that
\[
\begin{split}
\abs{\At(\wt;\vt)-\At(\zt;\vt)}&  \leq \int_\Om \abs{\Fg[\wt]-\Fg[\vt]}\abs{L_{\lambda,\cT} \vt }  \\\ &\quad+ \abs{\St(\wt-\zt,\vt)}+\abs{\Jpen(\wt-\zt,\vt)}
\\ &\lesssim \normt{\wt-\zt} \normt{\vt},
\end{split}
\]
with a constant depending on $\dim$, $\shapereg$, $p$, $q$, $\lambda$, $\sigma$ and $\rho$, thereby proving~\eqref{eq:discrete_cont}.
\end{proof}

\paragraph{{\bfseries Discrete consistency.}}

When $\theta=0$, it is straightforward to show that the nonlinear form $\At(\cdot,\cdot)$ defined in~\eqref{eq:At_def} satisfies the discrete consistency assumption~\eqref{eq:discrete_consistency}.
However for $\theta \neq 0$ this is far from obvious. The key for showing discrete consistency is then the following bound on the stabilization term $\St(\cdot,\cdot)$, showing that $\St(\cdot,\cdot)$ is bounded with respect to the jump seminorms of its arguments, rather than the whole norm. 

\begin{theorem}[Bound on stabilization terms]\label{thm:stab_bound}
The bilinear form~$\St$ defined in~\eqref{eq:S_def} satisfies the bound
\begin{equation}\label{eq:stab_bound}
\begin{aligned}
\abs{\St(\wt,\vt)}\lesssim \absJ{\wt}\absJ{\vt} &&&\forall \wt,\,\vt\in\Vt^s.
\end{aligned}
\end{equation}
\end{theorem}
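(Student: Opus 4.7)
The plan is to exploit the fact that the stabilization form $\St(\cdot,\cdot)$ annihilates $H^2(\Om)\cap H^1_0(\Om)$ in either argument, and then combine this with the $H^2$-conforming enrichment of Lemma~\ref{lem:enrich}.

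First I would show $\St(\phi,\vt)=0$ whenever $\phi\in H^2(\Om)\cap H^1_0(\Om)$ and $\vt\in\Vt^s$. The starting point is the pointwise null-Lagrangian identity $\Dpw\phi{:}\Dpw\vt - \Delta\phi\,\Delta\vt = \Div(\Dpw\vt\,\nabla\phi - (\nabla\phi)\Delta\vt)$, so that the divergence theorem reduces each $\int_K[\Dpw\phi{:}\Dpw\vt - \Delta\phi\,\Delta\vt]$ to a face integral. Decomposing $\nabla\phi$ into tangential and normal parts and using the flat-face identity $\Delta\vt = (\Dpw\vt\,\bn)\cdot\bn + \Delta_T\vt$, the boundary contribution from $K$ becomes $\int_{\p K}[\nabla_T(\nabla\vt\cdot\bn)\cdot\nabla_T\phi - \Delta_T\vt\,(\nabla\phi\cdot\bn)]$. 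Summing over $K\in\cT$ and rewriting the interior-face terms in jump/average form, the hypotheses on $\phi$ (namely $\jump{\nabla\phi\cdot\bn} = \jump{\nabla_T\phi} = 0$ on $\cF^I$ and $\nabla_T\phi = \Delta_T\phi = 0$ on $\cF^B$) make the surviving contributions cancel exactly the face terms in the definition of $\St(\phi,\vt)$. Symmetry of $\St$ then also yields $\St(\vt,\phi) = 0$.

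Given the enrichment operator $E_\cT\colon \Vt^s \to H^2(\Om)\cap H^1_0(\Om)$ from Lemma~\ref{lem:enrich}, bilinearity and the previous step give $\St(\wt,\vt) = \St(\wt - E_\cT\wt,\,\vt - E_\cT\vt)$. Writing $\psi \coloneqq \wt - E_\cT\wt$ and $\xi \coloneqq \vt - E_\cT\vt$, the volume term of $\St(\psi,\xi)$ is controlled by Cauchy--Schwarz in $L^2(\Om)$ and is thus bounded by $\|\Dpw\psi\|_\Om\|\Dpw\xi\|_\Om$. Each face term is handled by Cauchy--Schwarz on $F$ together with scaled trace inequalities: for instance, $\|\jump{\nabla_T\xi}\|_F^2 \lesssim \sum_{K\supset F}[h_K^{-1}\|\nabla\xi\|_K^2 + h_K\|\Dpw\xi\|_K^2]$ follows from the standard $H^1$--$L^2$ trace inequality applied elementwise, while the analogous bound for $\|\nabla_T\avg{\nabla\psi\cdot\bn}\|_F$ uses the piecewise polynomial structure of $E_\cT\wt$ on the HCT-type refinement of $\cT$ underlying Lemma~\ref{lem:enrich}, so that polynomial inverse trace estimates apply locally to $\psi$. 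Summing over faces with appropriate powers of $\hT$ and invoking~\eqref{eq:enrichment_operator} to replace $\|\Dpw\psi\|_\Om$ and $\|\hT^{-1}\nabla\psi\|_\Om$ by $\absJ{\wt}$ (and symmetrically for $\xi$) produces~\eqref{eq:stab_bound}.

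The subtle point is that $L^2(F)$-traces of second derivatives are ill-defined for a generic $H^2(\Om)$ function, so bounding $\|\nabla_T\avg{\nabla\psi\cdot\bn}\|_F$ at the correct scale cannot rely on Sobolev trace inequalities alone. The resolution is to use the explicit piecewise polynomial structure of the $H^2$-conforming enrichment on a mesh refinement, which furnishes the missing polynomial inverse trace estimates locally; this is the main obstacle of the argument.
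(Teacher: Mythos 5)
Your plan founders at its first step. The identity ``$\St(\phi,\vt)=0$ for all $\phi\in H^2(\Om)\cap H^1_0(\Om)$'' is not even well-posed: the definition~\eqref{eq:S_def} contains the face quantities $\Npw_T\avg{\Npw \phi\cdot\bn}$ and $\avg{\Delta_T\phi}$, i.e.\ $L^2(F)$-traces of second derivatives, which do not exist for a generic $H^2$ function. You flag this at the end, but the difficulty cannot be deferred to the final trace estimates: it already invalidates the statement and the cancellation you invoke. Moreover, the cancellation itself is not what you claim. Your elementwise integration by parts deliberately puts the second derivatives on $\vt$, so the surviving skeleton terms are of the form $\int_{\cF^I}\jump{\Npw_T(\Npw\vt\cdot\bn)}\cdot\Npw_T\phi-\jump{\Delta_T\vt}(\Npw\phi\cdot\bn)$ (plus boundary-face analogues); these are \emph{not} the face terms of $\St(\phi,\vt)$, which pair second derivatives of $\phi$ with jumps of first derivatives of $\vt$. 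Matching the two requires additional integrations by parts \emph{on each face}, producing $\partial F$ (edge/vertex) contributions whose global cancellation is exactly the delicate point — the clean cancellation only happens if the flux carries the derivatives of the \emph{conforming} argument, which is legitimate only when that argument has well-defined second-derivative traces, i.e.\ precisely the polynomial structure you were trying to avoid. Finally, even granting that $E_\cT\wt$ is a macroelement (piecewise polynomial on a refinement of $\cT$), this is a structural property of the particular constructions in the cited references, not part of the statement of Lemma~\ref{lem:enrich}; you would have to reopen those constructions (nontrivially so for $\dim=3$), verify shape regularity and degree bounds of the refinement, and reprove the vanishing identity for such macroelement functions, none of which is sketched.

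The paper sidesteps all of this by never leaving the piecewise polynomial setting on $\cT$: it writes $\St(\wt,\vt)=\bCt(\Npw\wt,\Npw\vt)$ for a bilinear form $\bCt$ on discontinuous vector fields $\bVt$, constructs an averaging enrichment $\bE\colon\bVt\tends\bVt\cap\bH$ (continuous piecewise polynomial vector fields on the \emph{same} mesh, with vanishing tangential trace on $\partial\Om$) satisfying $\normbt{\bvt-\bE\bvt}\lesssim\absbJ{\bvt}$, proves the consistency identity $\bCt(\bwt,\cdot)=0$ for $\bwt\in\bVt\cap\bH$ by elementwise integration by parts keeping the derivatives on the conforming (polynomial) field, and concludes from $\bCt(\bwt,\bvt)=\bCt(\bwt-\bE\bwt,\bvt-\bE\bvt)$ together with the boundedness of $\bCt$ in $\normbt{\cdot}$. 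Because the enriched object is still a polynomial on each $K\in\cT$, every face term is defined and every trace/inverse inequality is the standard one. If you want to salvage your scalar route, you must either replace $E_\cT$ by an enrichment whose \emph{gradient} is a continuous piecewise polynomial on $\cT$ itself (which is essentially the paper's $\bE$ applied to $\Npw\wt$), or carry out the macroelement analysis in full, including the on-face integrations by parts and the $\partial F$ bookkeeping; as written, the argument has a genuine gap.
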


The proof of Theorem~\ref{thm:stab_bound} is given in Section~\ref{sec:stab_bound} below.
We now show how it is used to prove~\eqref{eq:discrete_consistency}. 

\begin{corollary}[Discrete Consistency]\label{cor:discrete_cons}
The nonlinear form~\eqref{eq:At_def} satisfies~\eqref{eq:discrete_consistency} with a constant $C_{\mathrm{cons}}$ that depends only on $\dim$, $\shapereg$, $p$, $\sigma$, $\mu$, $\rho$, and $\Omega$, and a constant $C_{L_{\cT}}$ that depends only on $\dim$, $\lambda$ and, if $\chi=1$, then also on $\shapereg$, $p$ and $q$.
\end{corollary}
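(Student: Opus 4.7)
The plan is to apply the operator $L_{\cT}\coloneqq \LLt$ throughout, which reduces the corollary to separate bounds on the two leftover terms in $\At(\wt;\vt)-\int_\Om \Fg[\wt]\LLt\vt = \theta\St(\wt,\vt)+\Jpen(\wt,\vt)$, together with a norm bound on $\LLt$ itself. The essential input is Theorem~\ref{thm:stab_bound}, which supplies the crucial fact that $\St(\cdot,\cdot)$ is bounded in the jump seminorm rather than the full discrete norm.

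First I would establish the boundedness of $\LLt$. By the triangle inequality, $\norm{\LLt\vt}_\Om\leq \norm{\Delta\vt}_\Om+\chi\norm{\brT(\jump{\Npw\vt\cdot\bn})}_\Om+\lambda\norm{\vt}_\Om$. The first and third terms are bounded by $\sqrt{\dim}\,\normt{\vt}$ and $\lambda\normt{\vt}$ directly from the definition of $\normt{\cdot}$, while the lifting term (only present when $\chi=1$) is bounded using~\eqref{eq:lifting_bound}. This yields the second inequality in~\eqref{eq:discrete_consistency} with $C_{L_{\cT}}$ depending only on $\dim$, $\lambda$ and (if $\chi=1$) on $\shapereg,p,q$, as claimed.

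Next I would bound the stabilization contribution by applying Theorem~\ref{thm:stab_bound} directly, giving $\theta\abs{\St(\wt,\vt)}\lesssim\absJ{\wt}\absJ{\vt}\leq\absJ{\wt}\normt{\vt}$. For the penalty term, Cauchy--Schwarz on each summand of $\Jpen(\wt,\vt)$ gives the internal-face gradient-jump and the face-value-jump contributions directly in terms of $\absJ{\wt}\absJ{\vt}$, with a constant depending on $\sigma$ and $\rho$. The one term requiring additional care is the boundary tangential gradient jump $\int_{\cF^B}\sigma\hT^{-1}\jump{\nabla_T\wt}\cdot\jump{\nabla_T\vt}$; here, since $\jump{v}|_F=\tau_{\p K}v|_F$ on a boundary face $F$, an inverse estimate on polynomials on $F$ gives $\norm{\jump{\nabla_T v}}_F\lesssim h_F^{-1}\norm{\jump{v}}_F$, so this term is controlled by $\int_{\cF^B}\hT^{-3}\abs{\jump{\wt}}\abs{\jump{\vt}}\leq\absJ{\wt}\absJ{\vt}$ after Cauchy--Schwarz. (In the $s=1$ case, $\jump{\vt}$ vanishes identically on $\cF^B$ so this term is zero and no inverse estimate is needed.)

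Combining these three estimates gives $\abs{\theta\St(\wt,\vt)+\Jpen(\wt,\vt)}\leq C_{\mathrm{cons}}\absJ{\wt}\normt{\vt}$ with constant depending only on $\dim,\shapereg,p,\sigma,\rho$ (and mildly on $\Omega$ through the hidden constants), which is the first inequality in~\eqref{eq:discrete_consistency}. The main obstacle is really Theorem~\ref{thm:stab_bound} itself, already in hand; once that is invoked, the remaining work is routine Cauchy--Schwarz plus a one-line inverse estimate to handle the boundary tangential penalty term.
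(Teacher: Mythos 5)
Your proof is correct and follows essentially the same route as the paper: take $L_{\cT}\vt=\Dmut\vt-\lambda\vt$, bound it via \eqref{eq:lifting_bound}, and control the leftover $\theta\St(\wt,\vt)+\Jpen(\wt,\vt)$ by $\absJ{\wt}\absJ{\vt}\leq\absJ{\wt}\normt{\vt}$ using Theorem~\ref{thm:stab_bound} and Cauchy--Schwarz. Your explicit treatment of the boundary tangential-gradient penalty term via the inverse estimate $\norm{\jump{\nabla_T v}}_F\lesssim h_F^{-1}\norm{\jump{v}}_F$ is a detail the paper leaves implicit, but it matches the paper's intent.
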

\begin{proof}
Choosing $L_{\cT}\vt \coloneqq \Dmut\vt - \lambda \vt$ for all $\vt\in\Vt^s$, we see that $\norm{L_{\cT}\vt}_\Om\lesssim \normt{\vt}$ for all $\vt\in\Vt^s$ with a constant $C_{L_{\cT}}$ that depends only on $\dim$, $\lambda$, and also $\shapereg$, $p$ and $q$ if $\chi=1$.
Then, for all $\wt$, $\vt \in\Vt^s$, we obtain
\begin{equation}
\begin{aligned}
\left\lvert \At(\wt,\vt)-\int_\Om\Fg[\wt]L_{\cT}\vt\right\rvert\leq \abs{\St(\wt,\vt)} + \abs{\Jpen(\wt,\vt)} \leq C_{\mathrm{cons}} \absJ{\wt}\absJ{\vt},
\end{aligned}
\end{equation}
where we have used $\theta\in[0,1]$, and we have used Theorem~\ref{thm:stab_bound} in the second inequality to bound $\abs{\St(\wt,\vt)}$.
The constant $C_{\mathrm{cons}}$ above depends only on $\dim$, $\shapereg$, $p$, on the penalty parameters $\sigma$ and $\rho$, and on $\Omega$.
This proves~\eqref{eq:discrete_consistency}.
\end{proof}

\begin{remark}
The fact that the seminorm $\absJ{\vt}$ appears on the right-hand side of~\eqref{eq:stab_bound} for the function $\vt$ in the second argument of the bilinear form $\St(\cdot,\cdot)$ is not strictly necessary for the discrete consistency property~\eqref{eq:discrete_consistency}. Indeed, the condition~\eqref{eq:discrete_consistency} allows the full norm of the second argument of the nonlinear form to appear on the right-hand side. Thus, it is possible to show that the discrete consistency assumption~\eqref{eq:discrete_consistency} also holds for a nonsymmetric variant of the stabilization term $\St(\cdot,\cdot)$.
\end{remark}

\paragraph{{\bfseries Strong monotonicity.}}

We now show below that for all choices of the parameters defining the scheme, it is possible to choose the penalty parameters sufficiently large such that~\eqref{eq:discrete_mono} is satisfied.
The analysis suggests however that the minimum necessary penalty parameters required for strong monotonicity may depend significantly on the value of the stabilization parameter $\theta$.
Indeed, Theorem~\ref{thm:strong_mono_theta} shows that if the parameter $\theta$ is in an interval centred on $1/2$, see~\eqref{eq:stab_condition} below, then~\eqref{eq:At_def} holds for a choice of penalty parameters that is independent of the geometry of the domain $\Omega$.
In Theorem~\ref{thm:strong_mono}, we show that strong monotonicity can still be achieved for general $\theta$, but with penalty parameters that possibly further depend on the geometry of $\Omega$. In the following, recall that $\CPF$ is the constant in~\eqref{eq:Poincare}.

\begin{theorem}[Strong monotonicity~I]\label{thm:strong_mono_theta}
Suppose that~$\theta$ satisfies the condition
\begin{equation}\label{eq:stab_condition}
\theta \in \left(\frac{1-\sqrt{\nu}}{2},\frac{1+\sqrt{\nu}}{2} \right),
\end{equation}
and define the positive constant $\mu>0$ by
\begin{equation}\label{eq:mu_def}
\mu\coloneqq \theta- \frac{1-\nu}{4(1-\theta)}.
\end{equation}
Then, there exists $\smin$ and $\rhomin$, depending only on $\dim$, $\shapereg$, $\lambda$, $p$, $q$, $\theta$ and $\mu$, but not on $\Omega$, such that, for all $\sigma\geq \smin$ and $\rho\geq \rhomin$, the nonlinear form $\At(\cdot;\cdot)$ satisfies~\eqref{eq:discrete_mono} with a constant $\Cmon$ depending only on $\mu$ and on $\CPF$.
\end{theorem}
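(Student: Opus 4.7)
Set $z\coloneqq \wt-\vt$. By linearity of $\At$ in the second argument,
\[
\At(\wt;z)-\At(\vt;z) = \int_\Om (\Fg[\wt]-\Fg[\vt])\LLt z + \theta\St(z,z) + \Jpen(z,z).
\]
The strategy is to combine the nonlinear term with $\theta\St(z,z)$ via Lemma~\ref{lem:stab_identity}, exposing an $\absL{z}^2$ contribution with coefficient precisely $\mu$, and to absorb all remaining face terms into $\Jpen(z,z)$ by taking $\sigma$ and $\rho$ large. The final step converts the resulting lower bound into control of $\normt{z}^2$ via Theorem~\ref{thm:poincare}.

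If $\chi=1$, I first write $\LLt z = \LL z - r_\cT(\jump{\Npw z\cdot\bn})$ and use the Lipschitz bound~\eqref{eq:cordes_ineq2} together with the lifting estimate~\eqref{eq:lifting_bound} to bound the extra term by $\epsilon_0\absL{z}^2 + C\epsilon_0^{-1}\absJ{z}^2$, which will be absorbed at the end. For the principal term, I add and subtract $\int_\Om|\LL z|^2$, then apply the pointwise Cordes bound~\eqref{eq:cordes_ineq1} and Cauchy--Schwarz:
\[
\int_\Om(\Fg[\wt]-\Fg[\vt])\LL z \geq \norm{\LL z}_\Om^2 - \sqrt{1-\nu}\,\absL{z}\norm{\LL z}_\Om.
\]
The crucial tuning is the Young's inequality $\sqrt{1-\nu}ab \leq (1-\theta)b^2 + \tfrac{1-\nu}{4(1-\theta)}a^2$, whose parameter $1-\theta$ is chosen so that the coefficient of $\norm{\LL z}_\Om^2$ becomes exactly $\theta$; this yields
\[
\int_\Om(\Fg[\wt]-\Fg[\vt])\LL z + \theta\St(z,z) \geq \theta\bigl(\norm{\LL z}_\Om^2 + \St(z,z)\bigr) - \tfrac{1-\nu}{4(1-\theta)}\absL{z}^2.
\]

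Now I apply Lemma~\ref{lem:stab_identity} to rewrite $\norm{\LL z}_\Om^2 + \St(z,z) = B_{\cT,*}(z,z)$ and expand the latter using~\eqref{eq:Bstar_def}, which gives $B_{\cT,*}(z,z) = \absL{z}^2 + \mathcal{R}_\cT(z)$, where $\mathcal{R}_\cT(z)$ collects the four symmetric face contributions (two involving tangential gradient/Laplacian traces, two weighted by $\lambda$). Collecting terms produces the lower bound $\mu\absL{z}^2 + \theta\mathcal{R}_\cT(z)$, with $\mu$ exactly as defined in~\eqref{eq:mu_def}; condition~\eqref{eq:stab_condition} is precisely the positivity threshold $\mu>0$, obtained by locating the roots of the quadratic $4\theta^2-4\theta+(1-\nu)$. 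Each summand in $\mathcal{R}_\cT(z)$ is then estimated by pairing a trace on a face with the corresponding jump, applying scaled trace and inverse inequalities on shape-regular simplices, and Young's inequality, to obtain
\[
|\mathcal{R}_\cT(z)| \leq \epsilon_1\absL{z}^2 + C(\epsilon_1)\absJ{z}^2
\]
with $C(\epsilon_1)$ depending only on $\dim$, $\shapereg$, $p$, $q$, $\lambda$ and $\epsilon_1$; crucially the boundary-face tangential jumps in $\Jpen$ are what make this bound independent of $\Omega$. Choosing $\epsilon_0,\epsilon_1$ small relative to $\mu$ retains a coefficient $\mu/2$ on $\absL{z}^2$, and choosing $\sigma\geq\smin$, $\rho\geq\rhomin$ large enough makes $\Jpen(z,z)$ dominate $(\theta C(\epsilon_1)+C\epsilon_0^{-1})\absJ{z}^2$ by a positive multiple of $\absJ{z}^2$. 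Finally, $\int_\Om|\Dpw z|^2\leq \absL{z}^2$ together with Theorem~\ref{thm:poincare} gives $\normt{z}^2\leq \CPF^2(\absL{z}^2+\absJ{z}^2)$, yielding~\eqref{eq:discrete_mono} with $\Cmon$ depending only on $\mu$ and $\CPF$.

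\textbf{Main obstacle.} The algebraic core is already settled by the tuned Young parameter $t=1-\theta$, which is the unique choice that lets $\theta\St(z,z)$ recombine via Lemma~\ref{lem:stab_identity} into the coercive part $\theta B_{\cT,*}(z,z)$ and produces exactly the constant $\mu$ of~\eqref{eq:mu_def}. The genuinely technical step is the termwise estimation of $\mathcal{R}_\cT(z)$ with constants independent of $\Omega$: one has to verify that every face trace appearing there is paired with a jump that is in fact penalized in $\Jpen$, including the tangential component of $\Npw z$ on boundary faces, which is precisely the reason the last term in~\eqref{eq:jump_pen_def} is present.
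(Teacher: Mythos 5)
Your proposal is correct and follows essentially the same route as the paper: the same add-and-subtract of $\norm{\LL z}_\Om^2$, the same Young's inequality with parameter $1-\theta$ producing exactly $\mu$, the recombination via Lemma~\ref{lem:stab_identity} into $\theta B_{\cT,*}(z,z)$, absorption of the jump terms by taking $\sigma,\rho$ large, and Theorem~\ref{thm:poincare} at the end. The only difference is that where the paper invokes \cite[Lemma~6]{SS14} for the $\Omega$-independent coercivity bound $B_{\cT,*}(z,z)+\Jpen(z,z)\geq \kappa^{-1}\absL{z}^2+\tfrac12\Jpen(z,z)$, you re-derive that estimate inline by expanding $B_{\cT,*}(z,z)=\absL{z}^2+\mathcal{R}_\cT(z)$ and bounding the face terms with trace and inverse inequalities, which is a presentational rather than substantive deviation (and note the paper's Remark~5.3 indicates the boundary tangential-gradient penalty is convenient but not strictly indispensable here, since a face-wise inverse inequality bounds it by the $\hT^{-3}$ boundary jump term).
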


\begin{proof}
Note that~\eqref{eq:stab_condition} and $\nu\leq 1$ imply that $\theta\in(0,1)$ so that $\mu$ is well-defined and real.
It is then easy to check that $\mu$ is positive if and only if $\theta$ satisfies~\eqref{eq:stab_condition}.
The proof is an extension of the approach first introduced in~\cite{SS13,SS14}.
Let $\wt$, $\vt\in\Vt^s$ be arbitrary, and let $\zt\coloneqq \wt-\vt$.
To show~\eqref{eq:discrete_mono}, we start by proving that
\begin{equation}\label{eq:strong_mono_1}
\begin{aligned}
\frac{\mu}{4}\left(\absL{\zt}^2+\absJ{\zt}^2\right) \leq \At(\wt;\zt)-\At(\vt;\zt),
\end{aligned}
\end{equation}
where we recall that the $\lambda$-weighted seminorm $\absL{\cdot}$ is defined in~\eqref{eq:lambda_seminorm}.
The Poincar\'e--Friedrichs inequality of Theorem~\ref{thm:poincare} then implies~\eqref{eq:discrete_mono}, e.g.\ with a constant $\Cmon\leq 4 \CPF^2 \mu^{-1} $.
Note that since $\nu\leq 1$, it follows from~\eqref{eq:stab_condition} that $\theta\in (0,1)$.
We then use Lemma~\ref{lem:stab_identity} to obtain
\[
\begin{split}
\At(\wt;\zt)-\At(\vt;\zt)
 = &\int_{\Om}(\Fg[\wt]-\Fg[\vt])\left(L_\lambda \zt-\chi r_\cT(\jump{\nabla\zt\cdot\bn}) \right)
 \\ & +  \theta \left( B_{\cT,*}(\zt,\zt) - \int_\Om \abs{L_\lambda \zt}^2 \right) + \Jpen(\zt,\zt).
\end{split}
\]
Adding and subtracting~$\int_\Om \abs{L_\lambda \zt}^2$ and using the bounds~\eqref{eq:cordes_ineq1},~\eqref{eq:cordes_ineq2} and~\eqref{eq:lifting_bound}, we find that
\[
\begin{split}
\At(\wt;\zt)-\At(\vt;\zt) \geq \theta B_{\cT,*}(\zt,\zt) + (1-\theta)\norm{L_\lambda \zt}^2 + \Jpen(\zt,\zt)
\\ - \sqrt{1-\nu}\absL{\zt}\norm{L_\lambda \zt}_\Om - c_{\dagger} \chi\absL{\zt}\absJ{\zt},
 \end{split}
\]
where the constant $c_{\dagger}$ depends only on $\dim$, $\shapereg$, $p$ and $q$.
Since $\theta \in (0,1)$, we may use Young's inequality $$\sqrt{1-\nu}\absL{\zt}\norm{L_\lambda\zt}_\Om \leq \frac{1-\nu}{4(1-\theta)}\absL{\zt}^2+(1-\theta)\norm{L_\lambda\zt}^2_\Om$$
to obtain
\[
\At(\wt;\zt)-\At(\vt;\zt) \geq \theta B_{\cT,*}(\zt,\zt) - \frac{1-\nu}{4(1-\theta)}\absL{\zt}^2 - c_{\dagger} \chi   \absL{\zt}\absJ{\zt} + \Jpen(\zt,\zt).
\]
It is shown in~\cite[Lemma~6]{SS14} that for any $\kappa>1$, there exists $\smin$ and $\rhomin$, depending only on $\kappa$, $\dim$, $\shapereg$, $p$ and $\lambda$, such that
\begin{equation}\label{eq:Bstar_lower_bound}
\begin{aligned}
B_{\cT,*}(\zt,\zt)+\Jpen(\zt,\zt)\geq \frac{1}{\kappa}\absL{\zt}^2 + \frac{1}{2}\Jpen(\zt,\zt) &&&\forall\zt\in \Vt^s,
\end{aligned}
\end{equation}
for all $\sigma\geq \smin$ and $\rho\geq \rhomin$.
Recalling the definition of $\mu$ in~\eqref{eq:mu_def}, we then choose, e.g., $\kappa=(1-\mu/2\theta)^{-1}$, and note that $\kappa\in(1,2)$, to get
\[
\begin{split}
\At(\wt;\zt)-\At(\vt;\zt) & \geq \left( \frac{\theta}{\kappa} -\theta + \mu\right)\absL{\zt}^2  - c_{\dagger} \absL{\zt} \absJ{\zt} + \left(1-\frac{\theta}{2}\right)\Jpen(\zt,\zt)
\\ &=\frac{\mu}{2}\absL{\zt}^2 - c_{\dagger} \absL{\zt} \absJ{\zt} + \left(1-\frac{\theta}{2}\right)\Jpen(\zt,\zt)
\\ &\geq \frac{\mu}{4}\absL{\zt}^2 + \frac{1}{2}\Jpen(\zt,\zt)-\frac{c_{\dagger}^2}{\mu}\absJ{\zt}^2,
\end{split}
\]
where in the last line we have used $1-\theta/2\geq 1/2$. It is then seen that there exists $\smin$ and $\rhomin$ sufficiently large, depending only on $\dim$, $\shapereg$, $p$, $q$, $\lambda$, $\theta$ and $\mu$, such that~\eqref{eq:strong_mono_1} and hence also \eqref{eq:discrete_mono} both hold for all $\sigma \geq \smin$ and $\rho\geq \rhomin$.
\end{proof}

\begin{remark}[Optimal value of $\theta$]
Maximizing $\mu$ with respect to $\theta$ leads to $\mu=1-\sqrt{1-\nu}$ for $\theta=1-\frac{1}{2}\sqrt{1-\nu}$, which always satisfies~\eqref{eq:stab_condition} whenever $\nu\in(0,1)$.
The constant $\Cmon$ is then comparable to the constant appearing in~\eqref{eq:strong_monotonicity}.
In the context of mixed methods, Gallistl \& S\"uli~\cite[Eq.~(2.8)]{Gallistl19} make a similar optimal choice of a parameter for stabilizing the curls of the approximations to the gradients.
We consider here more general values of $\theta$ however since the constant $\nu$ appearing in the Cordes condition might only be known approximately in practice. However, for $\nu$ small, the optimal value $\theta=1-\frac{1}{2}\sqrt{1-\nu}$ approaches $1/2$, which was the original choice made in~\cite{SS13,SS14}.
\end{remark}

When the condition~\eqref{eq:stab_condition} does not hold, e.g.~as in~\cite{NeilanWu19}, then we can still show strong monotonicity for sufficiently large penalty parameters, although the penalty parameters may then possibly depend on the geometry of $\Omega$. See Remark~\ref{rem:penalty} below.

\begin{theorem}[Strong monotonicity~II]\label{thm:strong_mono}
There exists $\smin$ and $\rhomin$, depending only on $\dim$, $\shapereg$, $\lambda$, $p$, $q$, $\nu$ and $\Omega$, such that for all $\sigma\geq \smin$ and $\rho\geq \rhomin$, and all $\theta\in[0,1]$, the nonlinear form $\At(\cdot;\cdot)$ satisfies~\eqref{eq:discrete_mono} with a constant $\Cmon$ that depends only on $\nu$ and $\CPF$.
\end{theorem}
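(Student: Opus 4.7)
The plan is to bypass the $B_{\cT,*}$-based machinery that forces $\theta<1$ in Theorem~\ref{thm:strong_mono_theta} and instead exploit Theorem~\ref{thm:stab_bound}, which yields $|\St(\zt,\zt)|\leq C_S\absJ{\zt}^2$ with a constant $C_S$ uniform in $\theta\in[0,1]$. Setting $\zt\coloneqq \wt-\vt$ and using~\eqref{eq:At_def}, I would begin from
\[
\At(\wt;\zt)-\At(\vt;\zt)=\int_\Om (\Fg[\wt]-\Fg[\vt])\,\LLt\zt + \theta\,\St(\zt,\zt)+\Jpen(\zt,\zt),
\]
and treat $\theta\St(\zt,\zt)$ as a bounded perturbation of size $C_S\absJ{\zt}^2$. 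This replaces the use of Lemma~\ref{lem:stab_identity} and the $(1-\theta)^{-1}$ Young inequality in the proof of Theorem~\ref{thm:strong_mono_theta}, and removes any restriction on $\theta$.

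The new technical ingredient is a discrete Miranda--Talenti inequality
\[
\absL{\zt}\leq \norm{\LL\zt}_\Om+C_{\mathrm{MT}}\absJ{\zt}\qquad\forall\zt\in\Vt^s,
\]
obtained by setting $\widetilde{z}\coloneqq E_{\cT}\zt\in H^2(\Om)\cap H^1_0(\Om)$ via Lemma~\ref{lem:enrich}, applying~\eqref{eq:MirandaTalenti} to $\widetilde{z}$, and absorbing the enrichment residual using~\eqref{eq:enrichment_operator} together with the uniform bound $\hT\leq\diam\Om$. The constant $C_{\mathrm{MT}}$ depends on $\dim$, $\shapereg$, $p$, $\lambda$ and $\Om$ but is independent of the mesh-size, which is crucial if the penalty parameters are to be chosen uniformly in $\cT$.

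Next I would estimate the Cordes term. Writing $\Fg[\wt]-\Fg[\vt]=\LL\zt+R$ with $\norm{R}_\Om\leq\sqrt{1-\nu}\absL{\zt}$ by~\eqref{eq:cordes_ineq1}, and expanding $\LLt\zt=\LL\zt-\chi\,r_\cT(\jump{\Npw\zt\cdot\bn})$, I would split
\[
\int_\Om(\Fg[\wt]-\Fg[\vt])\,\LLt\zt=\norm{\LL\zt}_\Om^2-\chi\int_\Om\LL\zt\cdot r_\cT(\jump{\Npw\zt\cdot\bn})+\int_\Om R\,\LLt\zt.
\]
Applying Cauchy--Schwarz, the lifting bound~\eqref{eq:lifting_bound}, the discrete Miranda--Talenti inequality above, and Young's inequality to absorb each cross term into an arbitrarily small multiple of $\norm{\LL\zt}_\Om^2$ plus a constant multiple of $\absJ{\zt}^2$, I expect to obtain
\[
\int_\Om(\Fg[\wt]-\Fg[\vt])\,\LLt\zt\geq\tfrac{1}{2}\bigl(1-\sqrt{1-\nu}\bigr)\norm{\LL\zt}_\Om^2-C_0\absJ{\zt}^2,
\]
with $C_0$ depending on $\dim$, $\shapereg$, $p$, $q$, $\lambda$, $\nu$ and $\Om$.

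To conclude, combining the three contributions produces
\[
\At(\wt;\zt)-\At(\vt;\zt)\geq\tfrac{1}{2}(1-\sqrt{1-\nu})\norm{\LL\zt}_\Om^2-(C_0+C_S)\absJ{\zt}^2+\Jpen(\zt,\zt).
\]
Squaring the discrete Miranda--Talenti estimate yields $\norm{\LL\zt}_\Om^2\geq\tfrac{1}{2}\absL{\zt}^2-C_{\mathrm{MT}}^2\absJ{\zt}^2$. Choosing $\smin,\rhomin$ large enough that $\Jpen(\zt,\zt)\geq\min(\sigma,\rho)\absJ{\zt}^2$ dominates both the negative $\absJ{\zt}^2$ contributions and leaves an extra $\tfrac{1-\sqrt{1-\nu}}{4}\absJ{\zt}^2$, I arrive at $\At(\wt;\zt)-\At(\vt;\zt)\geq\tfrac{1-\sqrt{1-\nu}}{4}\bigl(\absL{\zt}^2+\absJ{\zt}^2\bigr)$; applying Theorem~\ref{thm:poincare} together with $\int_\Om|\Hessp\zt|^2\leq\absL{\zt}^2$ then gives~\eqref{eq:discrete_mono} with $\Cmon=4\CPF^2/(1-\sqrt{1-\nu})$. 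The main obstacle is bookkeeping: I must ensure that all $\Om$-dependent constants generated by the enrichment step (most notably $C_{\mathrm{MT}}$) and by the Young absorptions are channeled entirely into $\smin$ and $\rhomin$, so that the monotonicity constant $\Cmon$ depends on $\Om$ only through $\CPF$, as claimed.
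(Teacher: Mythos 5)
Your argument is correct, and its overall skeleton coincides with the paper's proof of Theorem~\ref{thm:strong_mono}: isolate $\norm{\LL\zt}_\Om^2$, control the Cordes remainder via~\eqref{eq:cordes_ineq1}, treat $\theta\St(\zt,\zt)$ as a perturbation of size $\absJ{\zt}^2$ using Theorem~\ref{thm:stab_bound}, handle the lifting term through~\eqref{eq:lifting_bound}, pass from $\norm{\LL\zt}_\Om$ back to $\absL{\zt}$ up to jump terms via a discrete Miranda--Talenti inequality, absorb all jump contributions into $\Jpen(\zt,\zt)$ by enlarging $\smin,\rhomin$, and conclude with Theorem~\ref{thm:poincare}; your bookkeeping is sound, $\Jpen(\zt,\zt)\geq\min(\sigma,\rho)\absJ{\zt}^2$ holds since the boundary tangential-gradient term is nonnegative, and $\Cmon=4\CPF^2/(1-\sqrt{1-\nu})$ depends only on $\nu$ and $\CPF$ (comparable to the paper's $\nu/8$, since $1-\sqrt{1-\nu}\geq\nu/2$), while $C_S$, $C_{\mathrm{MT}}$ and $C_0$ stay within the permitted dependencies for $\smin,\rhomin$. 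The genuine divergence is in how you obtain the discrete Miranda--Talenti ingredient: you derive $\absL{\zt}\leq\norm{\LL\zt}_\Om+C_{\mathrm{MT}}\absJ{\zt}$ from the scalar $H^2(\Om)\cap H^1_0(\Om)$-enrichment of Lemma~\ref{lem:enrich} together with the continuous inequality~\eqref{eq:MirandaTalenti} applied to $E_\cT\zt$, whereas the paper goes through the vector-field enrichment and consistency identity (Theorem~\ref{thm:vector_enrichment}, Lemma~\ref{lem:consistency_identity}, Theorem~\ref{thm:vector_MT}, Corollary~\ref{cor:MT}). Your route is shorter and reuses machinery already in place for the a posteriori analysis, but it invokes the convexity of $\Om$ through~\eqref{eq:MirandaTalenti} and requires the extra (routine) step of controlling the $\lambda$-weighted lower-order enrichment errors via $\hT\leq\diam\Om$; the paper's route is heavier but yields a standalone generalization to discontinuous vector fields that is valid on nonconvex polytopes, absorbs the $\lambda$-terms through the integration-by-parts identity~\eqref{eq:lap_identity} in Corollary~\ref{cor:MT}, and removes the polynomial-degree restriction of earlier discrete Miranda--Talenti proofs. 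Both routes inherit the same non-robust dependence of the constants on the geometry of $\p\Om$ near flat corners (Remark~\ref{rem:caveat}), so your claim about where the $\Om$-dependence lands is consistent with the paper.
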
 

The proof of Theorem~\ref{thm:strong_mono} is given in Section~\ref{sec:miranda} below.

\begin{remark}[Dependencies of penalty parameters]\label{rem:penalty}
Ideally, the penalty parameters $\sigma$ and $\rho$ should be chosen as small as possible, which is important for the accuracy of the method and the conditioning of the discrete problems. 
Notice that the original method of~\cite{SS13,SS14} based on the choice $\theta=1/2$ satisfies~\eqref{eq:stab_condition} for all values of $\nu>0$, and thus the stability of the method in~\cite{SS13,SS14} is robust with respect to domain geometry.
Theorem~\ref{thm:strong_mono_theta} shows that robustness with respect to domain geometry extends to a range of choices of $\theta$ satisfying~\eqref{eq:stab_condition}.
The difficulty when $\theta$ does not satisfy~\eqref{eq:stab_condition}, e.g.\ as in~\cite{NeilanWu19}, is that the proof of strong monotonicity then relies on a discrete Miranda--Talenti inequality, where, to the best of our knowledge, all current proofs involve some reconstruction operators with constants that depend critically on the angles formed by faces at corner points and corner edges, see~Remark~\ref{rem:caveat} above. These constants then feed into $\smin$ and $\rhomin$, which leaves open the possibility that they may become very large on domains with very nearly flat edges. 
\end{remark}

\begin{remark}[Near-best approximation and convergence]
It follows from Lemma~\ref{lem:lipschitz}, Corollary~\ref{cor:discrete_cons} and Theorems~\ref{thm:strong_mono_theta},~\ref{thm:strong_mono} that the constants appearing in the abstract assumptions~\eqref{eq:discrete_cont},~\eqref{eq:discrete_consistency} and~\eqref{eq:discrete_mono} all hold with constants depending only on the quantities detailed above, and otherwise independent of the mesh-size. Therefore, Theorem~\ref{thm:best_approx} and Corollary~\ref{cor:convergence} show quasi-optimality of the approximations and convergence for minimal regularity solutions in the small mesh limit for the family of methods considered above when considering shape-regular sequences of meshes.
\end{remark}

\section{Proof of Theorems~\ref{thm:stab_bound} and~\ref{thm:strong_mono}.}\label{sec:proofs}

We now turn towards the proof of Theorems~\ref{thm:stab_bound} and~\ref{thm:strong_mono}.
Our proofs are based on more general results concerning discontinuous piecewise-polynomial \emph{vector fields}.

\subsection{Enrichment of discontinuous piecewise-polynomial vector fields}\label{sec:vector_reconst}
Consider the space $\bVt$ of piecewise-polynomial vector fields of degree at most $p-1$ defined by
\begin{equation}\label{eq:bvt_def}
\bVt \coloneqq \{\bvt\in L^2(\Om; \mathbb R^\dim);\; \bvt|_K \in \mathbb{P}_{p-1}^\dim \quad\forall K \in \cT\},
\end{equation}
where $\mathbb{P}_{p-1}^\dim$ denotes the space of $\R^\dim$-valued polynomials of total degree at most $p-1$.
Note that $\nabla \vt\in \bVt$ for any $\vt\in\Vt^s$, $s\in\{0,1\}$.
Also, the fact that $p\geq 2$ implies that $\bVt$ contains at least all piecewise affine vector-valued polynomials, and thus has a nontrivial continuous subspace.
We define the norm $\normbt{\cdot}$ and seminorm $\absbJ{\cdot}$ on $\bVt$ by
\begin{equation}
\begin{aligned}
\normbt{\bvt}^2 \coloneqq \int_{\Om} \left[ \abs{\nabla \bvt}^2+\abs{\bvt}^2 \right] + \absbJ{\bvt}^2, &&&
\absbJ{\bvt}^2\coloneqq \int_{\cF^I}\hT^{-1}\abs{\jump{\bvt}}^2+\int_{\cF^B}\hT^{-1}\abs{(\bvt)_T}^2,
\end{aligned}
\end{equation}
for all $\bvt\in\bVt$.
We also consider the space $\bH$ of $H^1$-conforming vector fields with vanishing tangential components on the boundary, i.e.\
\begin{equation}
 \bH \coloneqq \{\bm{v}\in H^1(\Om; \mathbb R^\dim);\; \bm{v}_T =0\; \text{on } \p\Omega\},
\end{equation}
where $\bm{v}_T$ denotes the tangential component of the trace of $\bm{v}$ on the boundary. 
We now construct an operator $\bE$ that maps vector fields from $\bVt$ to $\bH$-conforming vector fields, with an error controlled by the jump of the vector field over all internal faces and by the tangential component of traces over boundary faces.

\begin{theorem}\label{thm:vector_enrichment}
There exists a linear operator $\bE\colon \bVt\tends \bVt\cap \bH$ such that
\begin{equation}\label{eq:vector_enrichment}
\begin{aligned}
\normbt{\bvt-\bE\bvt}  \lesssim \absbJ{\bvt}  &&&\forall\bvt\in\bVt.
\end{aligned}
\end{equation}
The constant in~\eqref{eq:vector_enrichment} depends on $\dim$, $\shapereg$, $p$ and on $\Omega$.
\end{theorem}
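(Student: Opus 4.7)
The plan is to construct $\bE$ by nodal averaging on a Lagrange-type basis, followed by a local projection at boundary nodes to enforce the tangential boundary condition. Since $p\geq 2$, the polynomial space $\mathbb P_{p-1}$ on each simplex admits a standard Lagrange unisolvent set of nodes such that nodes on a face are shared by the two adjacent elements. Applying this componentwise gives a nodal basis of $\bVt\cap H^1(\Omega;\R^\dim)$: a vector field in $\bVt$ belongs to $H^1(\Om;\R^\dim)$ if and only if every component has single-valued nodal values at every Lagrange node.

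First, I would prescribe the nodal values of $\bE\bvt$. At each node $z$ lying in the interior of $\Om$, set $(\bE\bvt)(z):=\card(\omega_z)^{-1}\sum_{K\in\omega_z}\bvt|_K(z)$, where $\omega_z$ is the patch of elements containing $z$; this is the familiar Karakashian--Pascal type averaging, now applied componentwise. At each node $z$ on $\p\Om$, take the same average and then apply the orthogonal projection $\Pi_z\colon\R^\dim\to T_z$ onto the subspace $T_z\coloneqq\{\bm w\in\R^\dim:\bm w\cdot \bm t=0\text{ for every tangent vector }\bm t\text{ to every }F\in\cF^B\text{ with }z\in F\}$. By construction $\bE\bvt$ is single-valued at every node, hence belongs to $\bVt\cap H^1(\Om;\R^\dim)$; and because the boundary trace of $\bE\bvt$ on any boundary face $F$ is determined solely by the nodal values at the Lagrange nodes on $F$, which lie in $T_z\subset \bn_F^\perp{}^\perp$, the tangential component $(\bE\bvt)_T$ vanishes on $\p\Om$, so $\bE\bvt\in \bVt\cap\bH$.

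Second, I would establish~\eqref{eq:vector_enrichment} by a standard local scaling argument. On each $K\in\cT$, equivalence of norms on the finite-dimensional space $\mathbb P_{p-1}^\dim$ yields
\[
\norm{\bvt-\bE\bvt}_K^2\lesssim h_K^\dim\sum_{z\in K}\abs{\bvt|_K(z)-(\bE\bvt)(z)}^2.
\]
For an interior node $z$, the difference is a linear combination of jumps $\jump{\bvt}(z)$ across interior faces in the patch $\omega_z$; for a boundary node $z$, there is in addition a contribution from the projection $\Pi_z$ that can be bounded by the tangential components $(\bvt|_K)_T(z)$ on boundary faces $F\ni z$. A polynomial trace/inverse inequality gives $\abs{\jump{\bvt}(z)}^2\lesssim h_F^{1-\dim}\int_F\abs{\jump{\bvt}}^2$ and $\abs{(\bvt|_K)_T(z)}^2\lesssim h_F^{1-\dim}\int_F\abs{(\bvt)_T}^2$. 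Summing over all nodes and elements, using $h_K\eqsim h_F\eqsim \hT$, yields $\sum_{K\in\cT}\norm{\bvt-\bE\bvt}_K^2\lesssim \absbJ{\bvt}^2$. The $H^1$ part follows from the inverse inequality $\norm{\nabla(\bvt-\bE\bvt)}_K\lesssim h_K^{-1}\norm{\bvt-\bE\bvt}_K$, and the jump-seminorm part is automatic since $\absbJ{\bvt-\bE\bvt}=\absbJ{\bvt}$ when $\bE\bvt\in\bH$.

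The main obstacle is the boundary projection at non-smooth points of $\p\Om$. At a corner vertex $z$ of $\p\Om$ in $\dim=2$ where two boundary faces meet with non-parallel normals, the subspace $T_z$ reduces to $\{0\}$, so the entire averaged value must be absorbed into the error. Expanding $\bvt|_K(z)$ in a basis of tangent vectors to the two meeting faces and using the tangential traces on both faces yields the required bound, but with a constant that depends on the angle between the faces, exactly as in~Remark~\ref{rem:caveat}. An analogous issue occurs in $\dim=3$ at nodes lying on an edge formed by two non-coplanar boundary faces, and at corner vertices lying on three or more non-coplanar faces. These are precisely the sources of the dependence of the constant in~\eqref{eq:vector_enrichment} on the geometry of $\Om$.
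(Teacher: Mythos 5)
Your proposal is correct and follows essentially the same route as the paper: componentwise nodal averaging in the spirit of Karakashian--Pascal, with your orthogonal projection onto $T_z$ at boundary nodes reproducing exactly the paper's case distinction (normal component at flat boundary nodes, zero at sharp ones), followed by the same local norm-equivalence/inverse-inequality scaling and the same geometry-dependent tangent-basis argument at sharp nodes that produces the dependence of the constant on $\Omega$.
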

\begin{proof}
The proof is an adaptation to the vectorial setting of enrichments by a standard technique of local averaging, see e.g.~\cite{KarakashianPascal03,NeilanWu19,BrennerSung2019}.
Consider the space $\bVt\cap H^1(\Om;\mathbb R^d)$ of continuous vector fields in $\bVt$, and let $\mathcal{Z}$ denote the set of points $z\in\overline{\Om}$ corresponding to the Lagrange degrees of freedom of the space $\bVt\cap H^1(\Om;\mathbb R^d)$. We remark that here the Lagrange degrees of freedom of vector fields are similar to the scalar case, with the only difference being that all degrees of freedom are point vector-values in $\R^\dim$. 
Thus, for example, if $p=2$, then $\bVt\cap H^1(\Om;\mathbb R^d)$ consists of continuous piecewise-affine vector-valued polynomials, and then $\mathcal{Z}$ consists of all mesh vertices.
Let $\mathcal{Z}$ be partitioned into the set of interior points $\mathcal{Z}^I$ and boundary points $\mathcal{Z}^B$.
For each $z\in\mathcal{Z}$, let $N(z)\coloneqq\{K\in\cT;z\in K\}$ denote the set of elements that contain $z$, where we recall that elements are by definition closed.
For each point $z\in\mathcal{Z}$, let $\cF_z \coloneqq \{F\in\cF;\;z\in F\}$ denote the set of faces containing $z$, and let $\cF^I_z\coloneqq \cF_z\cap \cF^I$ and $\cF^B_z\coloneqq \cF_z\cap \cF^B$ denote the sets of interior and boundary faces containing $z$ respectively, where we recall that faces are closed.
For boundary degrees of freedom, we distinguish two cases.
We call $z\in\mathcal{Z}^B$ flat and write $z\in\mathcal{Z}^B_{\flat}$ if and only if all of the faces in $\cF^B_z$ are coplanar. Otherwise we call $z$ sharp and write $z\in\mathcal{Z}^B_{\sharp}$. 
The operator $\bE\colon \bVt\tends\bVt\cap \bH$ is then defined in terms of its point values for each $z\in\mathcal{Z}$ by
\begin{equation}\label{eq:vector_enrichment_3}
\begin{aligned}
\bE \bvt(z) \coloneqq  
\begin{cases}
\frac{1}{\abs{N(z)}}\sum_{K'\in N(z)}\bvt|_{K'}(z)  &\text{if }z\in\mathcal{Z}^I,\\
\frac{1}{\abs{N(z)}}\sum_{K'\in N(z)}(\bvt|_{K'}(z)\cdot \bn_{\partial\Om})\bn_{\partial\Om}&\text{if }z\in\mathcal{Z}^B_\flat,\\
0 &\text{if }z\in\mathcal{Z}^B_\sharp,
\end{cases}
\end{aligned}
\end{equation}
where $\bvt\in\bVt$, where $\abs{N(z)}$ denotes the cardinality of $N(z)$, and where $\bn_{\partial\Omega}=\bn_{\partial\Omega}(z)$ denotes the unit outward normal to $\p\Omega$ at $z\in\mathcal{Z}^B_{\flat}$, which is uniquely defined when $z$ is flat.
It follows from the above definition that $\bE$ maps $\bVt$ into $\bVt\cap H^1(\Om;\R^\dim)$, and additionally it is seen that for any boundary face, $\bE \bvt$ has $\mathcal{H}^{\dim-1}$-a.e.\ vanishing tangential traces on the boundary for all $\bvt\in\bVt$, so that $\bE\colon \bVt\tends \bVt\cap \bH$.
Then, using similar arguments as in~\cite{KarakashianPascal03,NeilanWu19,HoustonSchotzauWihler07}, it is found that, for every $z\in\mathcal{Z}^I$ and every $K\in N(z)$, we have
\begin{equation}\label{eq:vector_enrichment_1}
\begin{aligned}
\abs{\bm{v}_\cT|_K(z)-\bE\bvt(z)}^2\lesssim \sum_{F\in\cF^I_z} \int_F \hT^{1-\dim}\abs{\jump{\bvt}}^2 &&&\forall \bvt\in\bVt,
\end{aligned}
\end{equation}
where the constant depends only on $\dim$, $\shapereg$ and $p$. For flat vertices $z\in\mathcal{Z}^B_{\flat}$, after splitting $\bvt(z)$ into its normal and tangential components, i.e.\ $\bvt|_K(z)=(\bvt|_K)_T+(\bvt|_K(z)\cdot\bn_{\p\Omega})\bn_{\p\Omega}$ for each $K\in N(z)$, we find that
\begin{equation}\label{eq:vector_enrichment_2}
\begin{aligned}
\abs{\bm{v}_\cT|_K(z)-\bE\bvt(z)}^2\lesssim \sum_{F\in\cF^I_z}\int_F\hT^{1-\dim}\abs{\jump{\bvt}}^2 + \sum_{F\in\cF^B_z}\int_F\hT^{1-\dim}\abs{\jump{(\bvt)_T}}^2 &&& \forall\bvt\in\bVt,
\end{aligned}
\end{equation}
where the constant depends only on $\dim$, $\shapereg$ and $p$.
Finally, if $z\in \mathcal{Z}^B_\sharp$, then there exists at least two faces in $\cF^B_z$ that are not coplanar, and thus there exists a set of unit vectors $\{\bm{t}_i\}_{i=1}^\dim$ forming a basis of $\R^\dim$, such that each $\bm{t}_i$ is a tangent vector to some face of $\cF^B_z$.
Therefore, we see that~\eqref{eq:vector_enrichment_2} also holds for $z\in \mathcal{Z}^B_{\sharp}$ but with a constant that additionally depends on the basis $\{\bm{t}_i\}_{i=1}^{\dim}$ and thus also on the geometry of $\p\Omega$, as in Remark~\ref{rem:caveat}.
The bound~\eqref{eq:vector_enrichment} is then obtained by inverse inequalities and summation of the above bounds~\eqref{eq:vector_enrichment_1} and~\eqref{eq:vector_enrichment_2}, proceeding as in~\cite{KarakashianPascal03}.
\end{proof}

\begin{remark}
The bound~\eqref{eq:vector_enrichment} can be easily improved to the sharper bound $\int_\Om\hT^{2m-2}\norm{\nabla^m(\bvt-\bE\bvt}^2 \lesssim \absbJ{\bvt}^2$ for each $m\in\{0,1\}$. However  this sharper bound is not needed in the following analysis.
Note also that the constant in~\eqref{eq:vector_enrichment} is subject to the same dependence on the geometry of the domain as the constants appearing in~\cite{NeilanWu19,BrennerSung2019}, as discussed in Remark~\ref{rem:caveat} above.
\end{remark}

\subsection{Analysis of stabilization bilinear form}\label{sec:stab_bound}

The main challenge in proving the discrete consistency bound~\eqref{eq:discrete_consistency} for the nonlinear forms $\At$ is the analysis of the stabilization bilinear form $\St(\cdot,\cdot)$ defined in~\eqref{eq:S_def}. We show here that $\St(\cdot,\cdot)$ can be seen as the restriction to piecewise gradients of a more general bilinear form on the space of vector fields $\bVt$. Let the bilinear form $\bCt\colon \bVt\times\bVt\tends \R$ be defined by
\begin{equation}\label{eq:bCt_def}
\begin{split}
\bCt(\bwt,\bvt) \coloneqq &
\int_{\Om} \left[\nabla \bwt:\nabla\bvt -(\nabla{\cdot} \bwt)(\nabla{\cdot}\bvt)-(\nabla{\times}\bwt)\cdot (\nabla{\times}\bvt)\right]
\\ &- \int_{\cF} \left[\avg{\Npw_T(\bwt{\cdot}\bn)}\cdot\jump{(\bvt)_T} + \avg{\Npw_T(\bvt{\cdot}\bn)}\cdot\jump{(\bwt)_T}\right]
\\ &+ \int_{\cF^I}\left[ \avg{\nabla_T{\cdot}(\bwt)_T}\jump{\bvt{\cdot}\bn} + \avg{\nabla_T{\cdot}(\bvt)_T}\jump{\bwt{\cdot}\bn}\right].
\end{split}
\end{equation}
where $\nabla \bvt$ denotes the density of the absolutely continuous part of $D(\bvt)$, 
where $\nabla\cdot\bvt$ denotes the trace of $\nabla \bvt$, and where, if $\dim=3$, then $(\nabla{\times}\bvt)_i\coloneqq\eps_{ijk} \nabla_{x_j}(\bvt)_{k}$ for all $i\in\{1,2,3\}$ with $\eps_{ijk}$ denoting the Levi--Civita symbol, and, if $\dim=2$, then $\nabla{\times}\bvt\coloneqq\nabla_{x_1}(\bvt)_2-\nabla_{x_2}(\bvt)_1$.
Thus, since $\bvt\in\bVt$ is piecewise smooth over $\cT$, we see that $\nabla\bvt$, $\nabla\cdot\bvt$ and $\nabla{\times}\bvt$ correspond to the piecewise gradient, divergence and curl of $\bvt$, respectively.
Using trace and inverse inequalities, it is straightforward to show that $\bCt$ is bounded on $\bVt$ in the sense that
\begin{equation}\label{eq:bCt_boundedness}
\begin{aligned}
\abs{\bCt(\bwt,\bvt)}\lesssim \normbt{\bwt}\normbt{\bvt} &&&\forall \bwt,\,\bvt\in\bVt.
\end{aligned}
\end{equation}
The bilinear form $\bCt(\cdot,\cdot)$ is related to $\St(\cdot,\cdot)$, c.f.\ \eqref{eq:S_def}, through the identity
\begin{equation}\label{eq:stab_C_relation}
\begin{aligned}
\St(\wt,\vt)=\bCt(\nabla \wt,\nabla \vt) &&&\forall \wt,\vt\in\Vt^s,
\end{aligned}
\end{equation}
which follows from the fact that the terms involving piecewise curls $\nabla{\times} \bvt$ vanish identically whenever $\bvt=\nabla \vt$ for some $\vt\in\Vt^s$.

The following Lemma can be seen as the vector-field extension of~\cite[Lemma~5]{SS13}, which was key to the consistency of the bilinear forms. In particular, it shows that $\bCt(\cdot,\cdot)$ vanishes whenever one of its arguments belongs to the subspace $\bVt\cap \bH$ of continuous piecewise-polynomial vector fields in $\bVt$ with vanishing tangential traces on $\p\Omega$.

\begin{lemma}[Consistency identity]\label{lem:consistency_identity}
For any $\bwt\in \bVt\cap \bH$ and any $\bvt\in\bvt$, we have
\begin{equation}\label{eq:consistency_identity}
\bCt(\bwt,\bvt)=\bCt(\bvt,\bwt)=0.
\end{equation}
\end{lemma}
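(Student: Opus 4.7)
The plan is to exploit the manifest symmetry of $\bCt(\cdot,\cdot)$ (by direct inspection of~\eqref{eq:bCt_def}), which reduces~\eqref{eq:consistency_identity} to proving $\bCt(\bwt,\bvt)=0$ whenever $\bwt\in\bVt\cap\bH$ and $\bvt\in\bVt$. The strategy is an elementwise integration by parts on the volume term, made possible by the polynomiality (hence smoothness) of $\bwt$ on each $K\in\cT$, combined with the pointwise algebraic identity
\begin{equation*}
\nabla\bwt:\nabla\bvt-(\nabla\cdot\bwt)(\nabla\cdot\bvt)-(\nabla\times\bwt)\cdot(\nabla\times\bvt)=\sum_{j,k}\partial_j w_k\,\partial_k v_j-(\nabla\cdot\bwt)(\nabla\cdot\bvt),
\end{equation*}
which I would first verify by direct index manipulation, relying on $\sum_i\eps_{ijk}\eps_{ilm}=\delta_{jl}\delta_{km}-\delta_{jm}\delta_{kl}$ in $\dim=3$ and an analogous short calculation in $\dim=2$.

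Next, on each element $K$, I integrate by parts to shift the outer derivative in $\partial_j w_k\,\partial_k v_j$ and in $\partial_j w_j\,\partial_k v_k$ from $\bvt$ onto $\bwt$. The two resulting bulk second-derivative contributions, $-\sum_{j,k}\partial_k\partial_j w_k\,v_j$ and $+\sum_{j,k}\partial_k\partial_j w_j\,v_k$, cancel after swapping the dummy indices $j\leftrightarrow k$ and using the smoothness of $\bwt$ to commute partials, leaving only the boundary contribution
\begin{equation*}
\int_{\p K}\Bigl[\sum_{j,k}\partial_j w_k\,v_j n_k-(\nabla\cdot\bwt)(\bvt\cdot\bn)\Bigr].
\end{equation*}
Introducing a local tangential/normal split $\bvt=(\bvt)_T+(\bvt\cdot\bn)\bn$ on $\p K$, and similarly decomposing the relevant first derivatives of $\bwt$, this integrand collapses to $\nabla_T(\bwt\cdot\bn)\cdot(\bvt)_T-\nabla_T\cdot(\bwt)_T\,(\bvt\cdot\bn)$, because the two separately appearing copies of $\partial_n(\bwt\cdot\bn)(\bvt\cdot\bn)$ cancel between the two terms.

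The assembly step is then to sum these boundary integrals over all $K\in\cT$. On an interior face $F\in\cF^I$, continuity of $\bwt$ makes $\bwt\cdot\bn_F$, $(\bwt)_T$, and their tangential derivatives single-valued; using that $(\bwt)_T$ is independent of the orientation of $\bn_F$ while $\bwt\cdot\bn_F$ reverses sign under $\bn_F\mapsto-\bn_F$, the contributions from the two adjacent elements combine into $\nabla_T(\bwt\cdot\bn)\cdot\jump{(\bvt)_T}-\nabla_T\cdot(\bwt)_T\jump{\bvt\cdot\bn}$. On a boundary face $F\in\cF^B$, the tangential-divergence term vanishes since $(\bwt)_T=0$ on $\p\Om$, leaving just $\nabla_T(\bwt\cdot\bn)\cdot(\bvt)_T$.

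Comparing with the face terms in~\eqref{eq:bCt_def}, I would then observe that all summands proportional to $\jump{(\bwt)_T}$ or $\jump{\bwt\cdot\bn}$ vanish identically, since continuity of $\bwt$ kills these on $\cF^I$ and $(\bwt)_T=0$ on $\p\Om$ together with the fact that $\jump{\bwt\cdot\bn}$ appears only on $\cF^I$ in~\eqref{eq:bCt_def} kills any boundary-face contribution. The surviving face terms of $\bCt(\bwt,\bvt)$ are then the exact negative of the boundary assembly produced by the elementwise integration by parts, so the volume and face contributions cancel and $\bCt(\bwt,\bvt)=0$. The hard part is not mathematical substance but disciplined bookkeeping: correctly tracking sign changes of $\bn$ between the two sides of an interior face, handling the tangential/normal splitting, and verifying the pointwise identity uniformly in $\dim\in\{2,3\}$ despite the different nature of the curl in those two dimensions.
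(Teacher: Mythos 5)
Your argument is correct and follows essentially the same route as the paper's proof: reduce to one case by symmetry, establish the elementwise integration-by-parts identity equating the volume integrand to the face terms $\nabla_T(\bwt\cdot\bn)\cdot(\bvt)_T-\nabla_T{\cdot}(\bwt)_T\,(\bvt\cdot\bn)$, sum over elements using that tangential derivatives commute with the single-valued traces of $\bwt$, and observe that the remaining face terms in~\eqref{eq:bCt_def} vanish because $\jump{(\bwt)_T}$ and $\jump{\bwt\cdot\bn}$ do. The only difference is that you spell out the derivation of the local identity (via the Levi--Civita computation and the normal/tangential split) that the paper states more tersely.
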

\begin{proof}
The proof is entirely similar to~\cite[Lemma~5]{SS13}, and we include it here only for completeness. Let $\bwt \in \bVt\cap \bH$ and $\bvt\in\bVt$ be arbitrary.
Observe that since the bilinear form $\bCt(\cdot,\cdot)$ is symmetric it is enough to show that $\bCt(\bwt,\bvt)=0$.
 For each $K\in\cT$, an integration-by-parts argument implies that
\begin{multline}\label{eq:consistency_identity_1}
\int_K \left[\nabla\bwt{:}\nabla\bvt - (\nabla{\cdot}\bwt)(\nabla{\cdot}\bvt) - (\nabla{\times}\bwt)\cdot(\nabla{\times}\bvt )\right] \\ - \int_{\p K}\nabla_T  (\bwt{\cdot}\bn_{\p K}) \cdot (\bvt)_T + \int_{\p K} \nabla_T{\cdot}(\bwt)_T (\bvt{\cdot}\bn_{\p K}) =0,
\end{multline}
where $\bn_{\p K}$ denotes the unit outward normal on $\p K$.
Using the fact that tangential differential operators commute with traces, we see that $\jump{\nabla_T (\bwt{\cdot} \bn_F )}_F = \nabla_T \jump{\bwt{\cdot}\bn_F}_F=0$ for each $F\in\cF^I$, and that $\jump{\nabla_T{\cdot}(\bwt)_T}_F=\nabla_T{\cdot}\jump{(\bwt)_T}_F=0$ for each $F\in\cF$ since $\bwt$ is continuous and thus $\jump{\bwt}=0$ for all interior faces, and since $\jump{(\bwt)_T}_F=(\bwt)_T =0$ for all boundary faces $F\in\cF^B$.
Note that for each face $F\subset \p K$, we have $\bn_F = \pm \bn_{\p K}|_F$ depending on the choice of orientation of $\bn_F$, and recall that the jumps are defined by~\eqref{eq:jumpavg} in terms of this chosen orientation.
Therefore, by summing the identity~\eqref{eq:consistency_identity} and using the above identities for jumps on faces to simplify $\jump{\nabla_T  (\bwt{\cdot}\bn_{F}) \cdot (\bvt)_T}_F=\avg{\nabla_T  (\bwt{\cdot}\bn_{F})}_F\cdot\jump{(\bvt)_T}_F$ for all faces $F\in \cF$ and $\jump{\nabla_T{\cdot}(\bwt)_T (\bvt\cdot  \bn_{F})}_F=\avg{\nabla_T{\cdot}(\bwt)_T}_F\jump{\bvt\cdot  \bn_{F}}_F$ for all $F\in\cF^I$, we find that
\begin{multline*}
\int_\Om \left[\nabla\bwt{:}\nabla\bvt - (\nabla{\cdot}\bwt)(\nabla{\cdot}\bwt) - \nabla{\times}\bwt\cdot\nabla{\times}\bvt \right] \\ - \int_{\cF}\avg{\nabla_T  (\bwt{\cdot}\bn)}\cdot\jump{(\bvt)_T} + \int_{\cF^I} \avg{\nabla_T{\cdot}(\bwt)_T}\jump{\bvt{\cdot}  \bn} =0,
\end{multline*}
from which we easily obtain~\eqref{eq:consistency_identity} after noting that all remaining terms in~\eqref{eq:bCt_def} vanish since they include the jumps on normal and tangential components of $\bwt$.
\end{proof}

We now prove Theorem~\ref{thm:stab_bound}.

\emph{Proof of Theorem~\ref{thm:stab_bound}.}
We will obtain~\eqref{eq:stab_bound} as a consequence of~\eqref{eq:stab_C_relation} and the related bound
\begin{equation}\label{eq:bCt_bound}
\abs{\bCt(\bwt,\bvt)}\lesssim \abs{\bwt}_{\bm{J},\cT}\abs{\bvt}_{\bm{J},\cT}.
\end{equation}
Indeed, once~\eqref{eq:bCt_bound} is known, we deduce~\eqref{eq:stab_bound} easily from~\eqref{eq:stab_C_relation} and from the bound $\abs{\nabla \vt}_{\bm{J},\cT}\lesssim \absJ{\vt}$, which is obtained by applying the inverse inequality to the tangential component of the gradient on boundary faces, i.e.\ $\int_F\hT^{-1}\abs{\jump{(\nabla \vt)_T}}^2\lesssim \int_F \hT^{-3}\abs{\jump{\vt}}^2$ for all $F\in\cF^B$.
Therefore, it is enough to show~\eqref{eq:bCt_bound}. To do so, let $\bwt$ and $\bvt \in \bVt$ be arbitrary, and recall $\bE$ from~Theorem~\ref{thm:vector_enrichment}. Then, since $\bE\colon\bVt\tends \bVt\cap \bH$, we infer from Lemma~\ref{lem:consistency_identity} that $\bCt(\bE\bwt,\bvt)=\bCt(\bwt,\bE\bvt)=\bCt(\bE\bwt,\bE\bvt)=0$ and hence
\begin{equation}
\bCt(\bwt,\bvt)= \bCt(\bwt-\bE\bwt,\bvt-\bE\bvt) .
\end{equation}
We then apply the bounds~\eqref{eq:vector_enrichment} and~\eqref{eq:bCt_boundedness} to obtain
 $\abs{\bCt(\bwt,\bvt)} \lesssim \normbt{\bwt-\bE\bwt}\normbt{\bvt-\bE\bvt} \lesssim \abs{\bwt}_{\bm{J},\cT}\abs{\bwt}_{\bm{J},\cT}$, which gives~\eqref{eq:bCt_bound} and thus completes the proof of~\eqref{eq:stab_bound}.\qed\medskip

\subsection{Discrete Miranda--Talenti inequality and proof of Theorem~\ref{thm:strong_mono}}\label{sec:miranda}

We now turn towards the proof of Theorem~\ref{thm:strong_mono}.
The proof follows the approach based on a discrete Miranda--Talenti inequality~\cite{NeilanWu19}.
Here we remove the restriction in~\cite{NeilanWu19} that $p\leq 3$ in the case $\dim=3$, and allow instead all $p\geq 2$ for all $\dim\in\{2,3\}$.
Moreover, we show here that the discrete Miranda--Talenti inequality can be seen as a special case of a more general result for discontinuous piecewise polynomial vector fields.

\begin{theorem}\label{thm:vector_MT}
All vector fields $\bvt\in\bVt$ satisfy
\begin{equation}\label{eq:vector_MT}
\begin{aligned}
\left\lvert \left( \int_\Om \abs{\nabla \bvt}^2\right)^{\frac{1}{2}} -\left( \int_\Om\left[\abs{\nabla{\cdot} \bvt}^2+\abs{\nabla{\times} \bvt}^2\right]\right)^{\frac{1}{2}}  \right\rvert \lesssim \absbJ{\bvt}.
\end{aligned}
\end{equation}
The constant in~\eqref{eq:vector_MT} depends only on $\dim$, $\shapereg$, $p$ and $\Om$.
\end{theorem}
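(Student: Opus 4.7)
The plan is to use the enrichment operator $\bE$ of Theorem~\ref{thm:vector_enrichment} to reduce the inequality to a pointwise algebraic identity combined with the divergence theorem. Given $\bvt\in\bVt$, I would introduce $\bwt\coloneqq\bE\bvt\in\bVt\cap\bH$, for which Theorem~\ref{thm:vector_enrichment} provides $\normbt{\bvt-\bwt}\lesssim\absbJ{\bvt}$. Since $\abs{\nabla{\cdot}\bm{u}}$ and $\abs{\nabla{\times}\bm{u}}$ are each pointwise bounded by $\sqrt{\dim}\,\abs{\nabla\bm{u}}$ elementwise, this also gives $\norm{\nabla(\bvt-\bwt)}_\Om + \norm{\nabla{\cdot}(\bvt-\bwt)}_\Om + \norm{\nabla{\times}(\bvt-\bwt)}_\Om \lesssim \absbJ{\bvt}$. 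Two applications of the triangle inequality in $L^2(\Om)$ then show that each of the two positive quantities inside the absolute value in~\eqref{eq:vector_MT} changes by at most $C\absbJ{\bvt}$ when $\bvt$ is replaced by $\bwt$, so it suffices to establish equality for $\bwt\in\bVt\cap\bH$.

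The analytic core is therefore the continuous identity
\begin{equation}\label{eq:plan_MT_id}
\int_\Om\abs{\nabla\bwt}^2 = \int_\Om\left[\abs{\nabla{\cdot}\bwt}^2+\abs{\nabla{\times}\bwt}^2\right]\qquad\forall\,\bwt\in\bVt\cap\bH,
\end{equation}
which I would deduce from the pointwise algebraic identity
\begin{equation*}
\abs{\nabla\bwt}^2 - (\nabla{\cdot}\bwt)^2 - \abs{\nabla{\times}\bwt}^2 = \nabla{\cdot}\bm{q}(\bwt),\qquad \bm{q}(\bwt)\coloneqq (\bwt{\cdot}\nabla)\bwt - \bwt(\nabla{\cdot}\bwt).
\end{equation*}
This reduces to $\mathrm{tr}((\nabla\bwt)^2)-(\mathrm{tr}\,\nabla\bwt)^2 = \nabla{\cdot}\bm{q}(\bwt)$ and is verified by a direct index calculation using only commutativity of partial derivatives. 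Applying the divergence theorem elementwise and summing over $\cT$ yields an interior-face contribution $\sum_{F\in\cF^I}\int_F\jump{\bm{q}(\bwt){\cdot}\bm{n}}_F$ and a boundary contribution $\int_{\p\Om}\bm{q}(\bwt){\cdot}\bm{n}$. On each interior face $F$, continuity of $\bwt$ implies that jumps of $\nabla\bwt$ across $F$ are purely normal; a short calculation then shows that both $\jump{(\bwt{\cdot}\nabla)(\bwt{\cdot}\bm{n})}_F$ and $(\bwt{\cdot}\bm{n})\jump{\nabla{\cdot}\bwt}_F$ collapse to the common quantity $(\bwt{\cdot}\bm{n})\jump{\p_n(\bwt{\cdot}\bm{n})}_F$ and so cancel. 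On each flat boundary face, adopting local coordinates with $\bm{n}=\bm{e}_\dim$ and invoking $\bwt_T\equiv 0$, the same two expressions both collapse to $(\bwt{\cdot}\bm{n})\p_n(\bwt{\cdot}\bm{n})$ and again cancel pointwise, so the boundary integral vanishes and~\eqref{eq:plan_MT_id} follows.

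Combining the two steps yields~\eqref{eq:vector_MT}, with the implicit constant inherited from Theorem~\ref{thm:vector_enrichment} and thus depending on $\dim$, $\shapereg$, $p$ and $\Om$. The step demanding the most care is the identity~\eqref{eq:plan_MT_id}: one must simultaneously verify the cancellation of interior-face jumps of $\bm{q}(\bwt){\cdot}\bm{n}$ (which hinges on continuity of $\bwt$ forcing jumps of $\nabla\bwt$ to be purely normal across faces) and the vanishing of the boundary flux on each flat face (which hinges on flatness of $\p\Om$ together with $\bwt_T\equiv 0$). The observation that distinguishes this setting from the scalar Miranda--Talenti inequality is that no curvature of $\p\Om$ appears in the boundary contribution, so convexity of $\Om$ plays no direct role in~\eqref{eq:plan_MT_id} beyond the standing polytopal assumption; convexity enters the overall bound only indirectly through the constants of the enrichment operator.
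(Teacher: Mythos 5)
Your proposal is correct and follows essentially the same route as the paper: reduce to the conforming case via the enrichment operator $\bE$ of Theorem~\ref{thm:vector_enrichment} together with (reverse) triangle inequalities, and then use that the equality $\int_\Om\abs{\nabla\bwt}^2=\int_\Om[\abs{\nabla{\cdot}\bwt}^2+\abs{\nabla{\times}\bwt}^2]$ holds exactly on $\bVt\cap\bH$. The only cosmetic difference is that you re-derive this conforming identity directly through the pointwise divergence identity with the flux $\bm{q}(\bwt)=(\bwt{\cdot}\nabla)\bwt-\bwt(\nabla{\cdot}\bwt)$, whereas the paper obtains it as the diagonal case of the consistency identity of Lemma~\ref{lem:consistency_identity}, whose proof is the same elementwise integration by parts exploiting continuity across interior faces and the vanishing tangential trace on the flat boundary faces.
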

\begin{proof}
Let $\bvt\in\bVt$ be arbitrary, and recall the operator $\bE$ from Theorem~\ref{thm:vector_enrichment}. Since $\bE\bvt\in \bVt\cap \bH$ for all $\bvt\in\bVt$, Lemma~\ref{lem:consistency_identity} implies that $\bCt(\bE\bvt,\bE\bvt)=0$ and thus upon noting that all face integral terms in $\bCt(\bE\bvt,\bE\bvt)$ vanish identically, we get
\begin{equation}\label{eq:vector_MT_1}
\begin{aligned}
 \left(\int_{\Om}\abs{\nabla \bE\bvt}^2\right)^{\frac{1}{2}} = \left(\int_\Om\left[\abs{\nabla{\cdot} \bE\bvt}^2+\abs{\nabla{\times} \bE\bvt}^2\right]\right)^{\frac{1}{2}} .
\end{aligned}
\end{equation}
Therefore,  applying triangle and reverse triangle inequalities along with~\eqref{eq:vector_MT_1}, we deduce that
\[
\begin{split}
 &\left\lvert  \left(\int_\Om \abs{\nabla \bvt}^2\right)^{\frac{1}{2}}  - \left(\int_\Om\left[\abs{\nabla{\cdot} \bvt}^2+\abs{\nabla{\times} \bvt}^2\right]\right)^{\frac{1}{2}} \right\rvert
\\ &\qquad\leq \left\lvert \left(\int_\Om \abs{\nabla \bvt}^2\right)^{\frac{1}{2}} - \left(\int_\Om \abs{\nabla \bE\bvt}^2\right)^{\frac{1}{2}} \right\rvert
\\ &\qquad\qquad+ \left\lvert  \left(\int_\Om\left[\abs{\nabla{\cdot} \bvt}^2+\abs{\nabla{\times} \bvt}^2\right]\right)^{\frac{1}{2}} - \left(\int_\Om\left[\abs{\nabla{\cdot} \bE\bvt}^2+\abs{\nabla{\times} \bE\bvt}^2\right]\right)^{\frac{1}{2}}  \right\rvert 
\\ & \qquad\qquad\qquad \lesssim \norm{\nabla(\bvt-\bE\bvt)}_\Om \lesssim \absbJ{\bvt},
\end{split}
\]
where we have applied~Theorem~\ref{thm:vector_enrichment} in the last line, thereby proving~\eqref{eq:vector_MT}.
\end{proof}

Note that the analysis above does not use anywhere the fact that the domain is convex, and thus Theorem~\ref{thm:vector_MT} is also valid for sufficiently regular polytopal nonconvex domains.

We now see that the discrete Miranda--Talenti inequality is a direct consequence of Theorem~\ref{thm:vector_MT} by using the fact that $\nabla{\times} \bvt =0 $ whenever $\bvt = \nabla \vt$ for some $\vt\in\Vt^s$.

\begin{corollary}[Discrete Miranda--Talenti inequality]\label{cor:MT}
There exists a constant $C_{\mathrm{MT}}$ depending on $\dim$, $\shapereg$, $p$ and $\Om$, such that
\begin{equation}\label{eq:MT}
\begin{aligned}
\left\lvert\left(\int_\Om \abs{\nabla^2 \vt}^2\right)^{\frac{1}{2}} - \left(\int_\Om \abs{\Delta \vt}^2\right)^{\frac{1}{2}} \right\rvert \leq  C_{\mathrm{MT}}\absJ{\vt} &&&\forall \vt\in\Vt^s.
\end{aligned}
\end{equation}
Furthermore, for every $\delta>0$, there exists a constant $C_\delta$, depending on $\delta$, $\dim$, $p$, $\shapereg$, $\lambda$, and $\Om$, such that
\begin{equation}\label{eq:LL_lower_bound}
\begin{aligned}
(1-\delta)\absL{\vt}^2 \leq \norm{L_\lambda \vt}^2_\Om + C_\delta \absJ{\vt}^2 &&&\forall \vt\in\Vt^s.
\end{aligned}
\end{equation}
\end{corollary}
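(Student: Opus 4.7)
The plan is to obtain both inequalities from Theorem~\ref{thm:vector_MT} applied to the piecewise gradient $\nabla\vt$, together with elementwise integration by parts to handle the zero-th order terms in $L_\lambda$.

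\textbf{Step 1 (Inequality~\eqref{eq:MT}).} I would set $\bvt\coloneqq \nabla\vt$, and note that $\bvt\in\bVt$ because $\vt|_K\in\mathbb{P}_p$ and $p-1\geq 1$. Since $\bvt$ is a piecewise gradient of a scalar, its piecewise curl vanishes identically, $\nabla{\cdot}\bvt=\Delta\vt$ and $\nabla\bvt=\nabla^2\vt$, so Theorem~\ref{thm:vector_MT} reduces to
\[
\left\lvert \Big(\smallint_\Om |\nabla^2\vt|^2\Big)^{1/2}-\Big(\smallint_\Om |\Delta\vt|^2\Big)^{1/2}\right\rvert \lesssim \absbJ{\nabla\vt}.
\]
It then suffices to show $\absbJ{\nabla\vt}\lesssim \absJ{\vt}$. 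The interior-face contribution $\int_{\cF^I}\hT^{-1}\abs{\jump{\nabla\vt}}^2$ is already part of $\absJ{\vt}^2$. For the boundary-face contribution, for $F\in\cF^B$ the trace $(\nabla\vt)_T$ on $F$ equals $\nabla_T(\jump{\vt}|_F)$ since $\jump{\vt}|_F=\tau_{\p K}\vt|_F$, and an inverse inequality for face polynomials of degree $\leq p$ gives $\int_F\hT^{-1}|\nabla_T \jump{\vt}|^2\lesssim \int_F \hT^{-3}|\jump{\vt}|^2$. Summing over boundary faces yields the required estimate.

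\textbf{Step 2 (Inequality~\eqref{eq:LL_lower_bound}).} I would expand $\norm{L_\lambda\vt}^2_\Om=\int_\Om|\Delta\vt|^2-2\lambda\int_\Om\vt\Delta\vt+\lambda^2\norm{\vt}^2_\Om$ and apply the elementwise integration-by-parts identity~\eqref{eq:lap_identity} to the middle term to obtain
\[
\norm{L_\lambda\vt}^2_\Om = \int_\Om|\Delta\vt|^2+2\lambda\int_\Om|\nabla\vt|^2+\lambda^2\norm{\vt}^2_\Om -2\lambda R_\cF(\vt),
\]
where $R_\cF(\vt)\coloneqq \int_\cF\avg{\nabla\vt\cdot\bn}\jump{\vt}+\int_{\cF^I}\jump{\nabla\vt\cdot\bn}\avg{\vt}$. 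Subtracting from $\absL{\vt}^2$ leaves
\[
\absL{\vt}^2-\norm{L_\lambda\vt}^2_\Om = \int_\Om\bigl(|\nabla^2\vt|^2-|\Delta\vt|^2\bigr)+2\lambda R_\cF(\vt).
\]
For the first term, I would square~\eqref{eq:MT} and use Young's inequality to get $\int_\Om|\nabla^2\vt|^2\leq (1+\delta_1)\int_\Om|\Delta\vt|^2+(1+\delta_1^{-1})C_{\mathrm{MT}}^2\absJ{\vt}^2$, so $\int_\Om(|\nabla^2\vt|^2-|\Delta\vt|^2)\leq \delta_1 \int_\Om|\Delta\vt|^2+C_{\delta_1}\absJ{\vt}^2$. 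For the face terms in $R_\cF(\vt)$, I would apply Cauchy--Schwarz with weights $\hT^{\pm 3/2}$ (respectively $\hT^{\pm 1/2}$ for the jumps of normal derivatives), use trace inequalities together with $\hT\lesssim \diam\Omega$ to bound the resulting $\int_\cF \hT^3 \avg{\nabla\vt\cdot\bn}^2$ by $(\diam\Omega)^2\norm{\nabla\vt}^2_\Om$ (and similarly $\int_{\cF^I}\hT\avg{\vt}^2\lesssim \norm{\vt}^2_\Om$), and then apply Young's inequality to get $2\lambda|R_\cF(\vt)|\leq \delta_2\absL{\vt}^2+C_{\delta_2,\lambda,\Omega}\absJ{\vt}^2$.

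\textbf{Step 3 (Absorption).} Combining the two bounds and using $\int_\Om|\Delta\vt|^2\leq d\int_\Om|\nabla^2\vt|^2\leq d\,\absL{\vt}^2$, I would obtain
\[
(1-d\delta_1-\delta_2)\absL{\vt}^2 \leq \norm{L_\lambda\vt}^2_\Om+C\absJ{\vt}^2,
\]
and conclude~\eqref{eq:LL_lower_bound} by choosing $\delta_1,\delta_2>0$ so that $d\delta_1+\delta_2=\delta$.

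The main conceptual step, namely the Miranda--Talenti-type comparison between $\int_\Om|\nabla^2\vt|^2$ and $\int_\Om|\Delta\vt|^2$, has already been done in Theorem~\ref{thm:vector_MT} via the enrichment $\bm{E}_\cT$; the only genuinely new work is the somewhat technical bookkeeping of the face terms $R_\cF(\vt)$ produced by integrating the $\lambda\vt$ part of $L_\lambda$ against $\Delta\vt$, where the explicit dependence of the resulting constant $C_\delta$ on $\lambda$ and $\diam\Omega$ becomes unavoidable.
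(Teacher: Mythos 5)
Your proposal is correct and follows essentially the same route as the paper: \eqref{eq:MT} is obtained by applying Theorem~\ref{thm:vector_MT} to the curl-free field $\nabla\vt$ together with the inverse inequality $\int_F\hT^{-1}\abs{\jump{(\nabla\vt)_T}}^2\lesssim\int_F\hT^{-3}\abs{\jump{\vt}}^2$ on boundary faces, and \eqref{eq:LL_lower_bound} by elementwise integration by parts via~\eqref{eq:lap_identity}, weighted Cauchy--Schwarz/trace bounds on the resulting face terms, and a Young-inequality absorption using \eqref{eq:MT}. The only differences from the paper's argument are organizational (you work with the difference $\absL{\vt}^2-\norm{L_\lambda\vt}_\Om^2$ and use $\abs{\Delta\vt}^2\le d\abs{\nabla^2\vt}^2$ in the final absorption, while the paper re-parametrizes with $(1+\eps)^{-1}=1-\delta$), which does not affect correctness or the stated dependence of the constants.
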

\begin{proof}
The proof of~\eqref{eq:MT} is immediate from~Theorem~\ref{thm:vector_MT}, so it remains only to prove~\eqref{eq:LL_lower_bound}.
Let $\delta>0$ be given, and let $\vt\in\Vt^s$ be arbitrary. For $\epsilon>0$ to be chosen below, we infer from Corollary~\ref{cor:MT} and Young's inequality $2xy\leq \eps x^2 + \eps^{-1}y^2$ for all positive numbers $x$, $y$, that
\begin{equation}\label{eq:LL_lower_bound_1}
\begin{aligned}
 (1+\eps)^{-1}\int_{\Om} \abs{\nabla^2 \vt}^2 \leq \int_{\Om } \abs{\Delta \vt}^2 + (1+\eps)^{-1}(1+\eps^{-1})C_{\mathrm{MT}}^2\absJ{\vt}^2&&&\forall \vt\in\Vt^s,
 \end{aligned}
\end{equation}
Moreover, using inverse inequalities and the integration by parts identity~\eqref{eq:lap_identity}, we find that{}
\begin{equation}\label{eq:LL_lower_bound_2}
\int_\Om \left[ 2\lambda \abs{\nabla\vt}^2 + \lambda^2 \abs{v}^2\right] \leq \int_\Om \left[ - 2\lambda \vt \Delta \vt + \lambda^2 \abs{\vt}^2\right] + C_{3} \absJ{\vt}\left(\int_\Om \left[ 2\lambda \abs{\nabla\vt}^2 + \lambda^2 \abs{v}^2\right]\right)^{\frac{1}{2}},
\end{equation}
where $C_3$ is a constant depending only on $\dim$, $\shapereg$, $p$ and $\lambda$.
After a further application of Young's inequality to the last term on the right-hand side of~\eqref{eq:LL_lower_bound_2}, we combine the above inequalities with~\eqref{eq:LL_lower_bound_1} and find that
\[
\begin{split}
(1+\eps)^{-1}\absL{\vt}^2 & \leq \int_{\Om} \left[\abs{\Delta \vt}^2-2\lambda \vt \Delta \vt + \lambda^2 \abs{\vt}^2\right] + C_4 \absJ{\vt}^2
 = \int_\Om \abs{\LL \vt}^2 + C_4 \absJ{\vt}^2,
\end{split}
\]
with a constant $C_4$ depending on $\eps$, $C_3$ and $C_{\mathrm{MT}}$ above. Thus, after choosing $\eps$ such that $(1+\eps)^{-1}=(1-\delta)$ for the given $\delta$, we obtain~\eqref{eq:LL_lower_bound}.
\end{proof}

We now give the proof of Theorem~\ref{thm:strong_mono}.
\medskip 

\emph{Proof of Theorem~\ref{thm:strong_mono}.}
Let $\wt$, $\vt\in\Vt^s$ be arbitrary, and let $\zt\coloneqq \wt-\vt$. Then, adding and subtracting $\norm{\LL \zt}_\Om^2$ we get
\begin{multline*}
\At(\wt;\zt)-\At(\vt;\zt) = \norm{\LL \zt}^2_\Om +\int_\Om (\Fg[\wt]-\Fg[\vt]-\LL\zt)\LL\zt 
\\ - \int_\Om (\Fg[\wt]-\Fg[\vt])\chi r_{\cT}(\jump{\nabla \zt\cdot\bn }) + \theta \St(\zt,\zt) + \Jpen(\zt,\zt)
\\ \geq \norm{\LL \zt}^2_\Om - \sqrt{1-\nu}\absL{\zt}\norm{\LL\zt}_\Om  - \chi c_{\dagger} \absJ{\zt}\absL{\zt} - \theta C_5 \absJ{\zt}^2 + \Jpen(\zt,\zt),
\end{multline*}
where we have used~\eqref{eq:cordes_ineq1}, \eqref{eq:cordes_ineq2} and Theorem~\ref{thm:stab_bound}, with $C_5$ the constant from~\eqref{eq:stab_bound}, and $c_{\dagger}$ a constant depending only on $\dim$, $\shapereg$, $p$ and $q$.
Using Young's inequality and Corollary~\ref{cor:MT} with, for instance, $\delta = \nu/4$, we then eventually find that
\[
\begin{split}
\At(\wt;\zt)-&\At(\vt;\zt)\\
&\geq \frac{1}{2}\norm{\LL \zt}^2 - \frac{(1-\nu)}{2}\absL{\zt}^2 - \chi c_{\dagger} \absJ{\zt}\absL{\zt} - \theta C_5 \absJ{\zt}^2 + \Jpen(\zt,\zt)
\\ & \geq \frac{\nu - \delta}{2} \absL{\zt}^2 - \chi c_{\dagger} \absJ{\zt}\absL{\zt} - (\theta C_5+C_{\delta}/2) \absJ{\zt}^2 + \Jpen(\zt,\zt),
\\ & = \frac{\nu}{4}\absL{\zt}^2 - \chi c_{\dagger} \absJ{\zt}\absL{\zt} - \left(\theta C_5+C_{\nu/4}/2\right) \absJ{\zt}^2 + \Jpen(\zt,\zt),
\end{split}
\]
where, after using $\chi\in\{0,1\}$ and $\theta\in[0,1]$, we see that that there exists $\smin$ and $\rhomin$ depending only on $\dim$, $\shapereg$, $p$, $q$, $\lambda$, $\nu$ and $\Omega$ such that~$\At(\wt;\zt)-\At(\vt;\zt)\geq \frac{\nu}{8}(\absL{\zt}^2+\absJ{\zt}^2)$, from which~\eqref{eq:discrete_mono} follows upon using~Theorem~\ref{thm:poincare}. In particular, we may then take $\Cmon$ to depend only on $\nu$ and $\CPF$. \qed

\section{Numerical Experiment}\label{sec:numexp}
In this section, we consider a numerical experiment for a fully nonlinear Isaacs equation posed on the irregular pentagonal domain $\Omega$ presented in Figure~\ref{fig_domain2}. Note that the domain $\Omega$ is characterized by a large interior angle $\pi-\phi$ at the origin, where $\phi\in[0,\pi/4]$ is a small parameter specified below. This choice of domain is motivated by Remark~\ref{rem:caveat}.
We consider the Isaacs equation
$$\inf_{\alpha\in \sA}\sup_{\beta\in\sB}[a^{\alpha\beta}:\nabla^2 u - f^{\alpha\beta}]=0 \quad\text{in }\Omega,$$
along with the homogeneous Dirichlet boundary condition $u=0$ on $\p\Om$, where 
\begin{equation*}
\begin{aligned}
a^{\alpha\beta}\coloneqq \beta \begin{bmatrix} \frac{1}{\sqrt{2}}(\cos\alpha+\sin\alpha) & 0\\ 0 &  \frac{1}{\sqrt{2}}(\cos\alpha-\sin\alpha)\end{bmatrix} \beta^{\top},
\end{aligned}
\end{equation*}
for all $ \alpha \in \sA\coloneqq [0,\alpha_{\mathrm{max}}]$, with $\alpha_{\mathrm{max}}\in \R_{\geq 0}$ chosen below, and for all $\beta \in \sB\coloneqq \mathrm{SO}(2)$ the special orthogonal group of matrices in $\R^{2\times 2}$.
The diffusion coefficients then satisfy the Cordes condition~\eqref{eq:Cordes2} with $\nu=\cos(2\alpha_{\mathrm{max}})\in(0,1]$, provided that $\alpha_{\mathrm{max}}<\pi/4$. In our experiment, we set $\alpha_{\mathrm{max}} \coloneqq 9\pi/40$ to be close to $\pi/4$.
The rotation matrices $\beta \in \mathrm{SO}(2)$ then have the effect of allowing the diffusion coefficients to become strongly anisotropic, and prevent the possibility of aligning the mesh with the principal directions of diffusion. Moreover, a closer analysis shows that the control parameter $\alpha$ is of \emph{bang-bang} type, leading to jump discontinuities in the optimal control.
In order to test the numerical methods in the regime of low-regularity solutions, we choose an exact solution exhibiting a singularity induced by the corner. In particular, the source term $f^{\alpha\beta}$ is chosen so that the exact solution given in polar coordinates $(r,\rho)$ is given by
\[
u(r,\rho) = -r^{\frac{\pi}{\pi-\phi}}\sin\left(\frac{\pi}{\pi-\phi}\rho\right)\eta_{1/2}(r),
\]
where $\eta_{1/2}(r):=\chi_{\{r<1/2\}}e^{1/(4r^2-1)}$ is a smooth cut-off function that is included to enforce the homogeneous Dirichlet boundary condition, with $\chi_{r<1/2}$ the indicator function for the disc of radius $1/2$ around the origin.
Note that the regularity of the solution decreases as $\phi$ becomes small.
For the computations presented below, we choose $\phi=\pi/10$, and note that $u\in H^{s}(\Omega)$ only for $s<2+1/9$, which falls outside the scope of the \emph{a priori} error analysis of some earlier works.
In particular, uniform mesh refinements would lead to low rates of convergence, so we turn to adaptive methods.

\begin{figure}
\begin{center}
\includegraphics{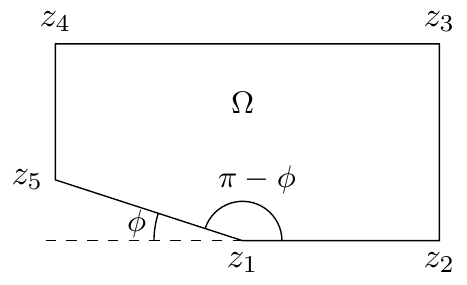}
\caption{Experiment of Section~\ref{sec:numexp}: pentagonal domain $\Omega$, with vertices $z_1=(0,0)$, $z_2=(1,0)$, $z_3=(1,1)$, $z_4=(\cos(\pi-\phi),1))$, and $z_5=(\cos(\pi-\phi),\sin(\pi-\phi))$.}
\label{fig_domain2}
\end{center}
\end{figure}

\begin{figure}
\begin{center}
\includegraphics[width=4cm]{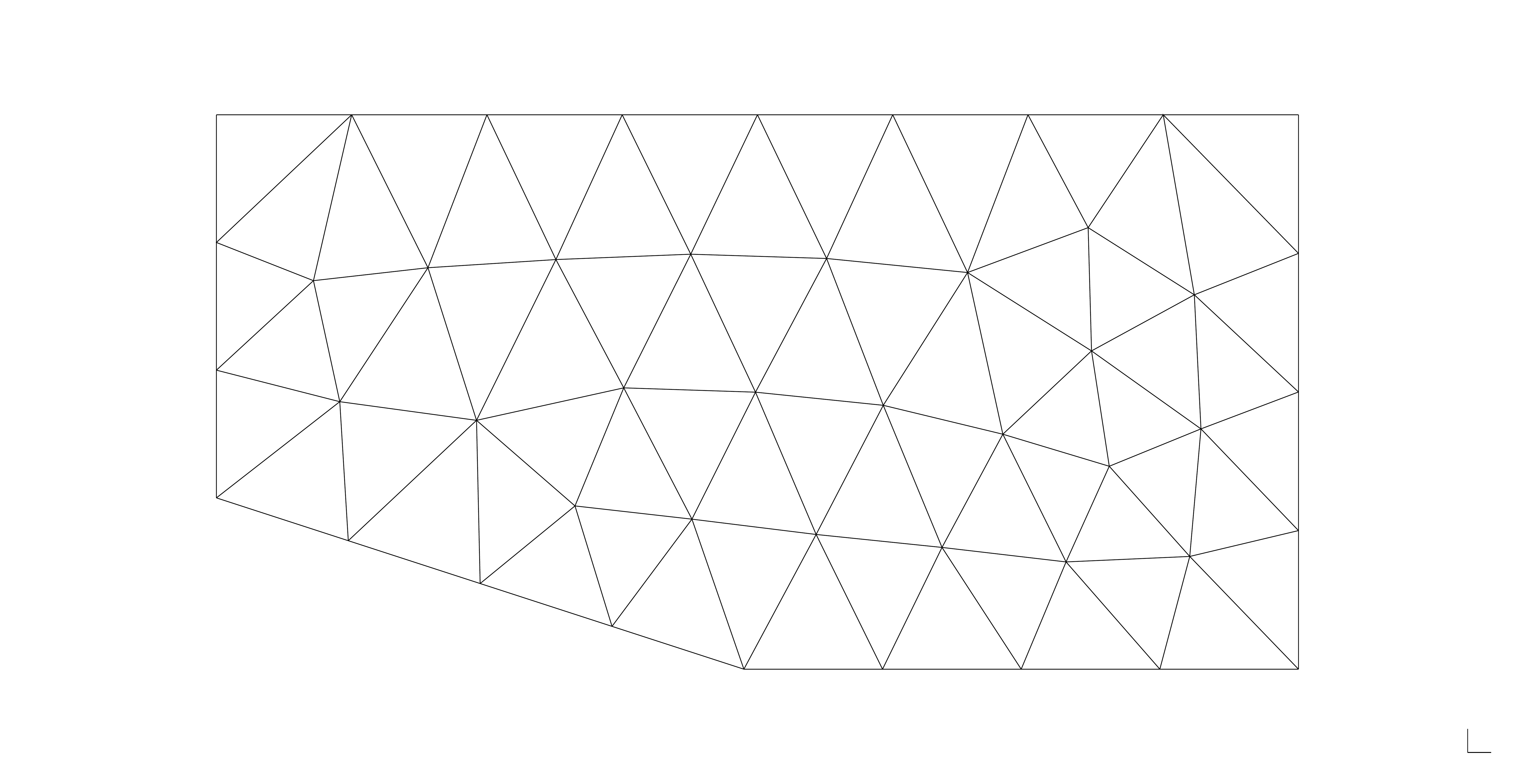}
\includegraphics[width=4cm]{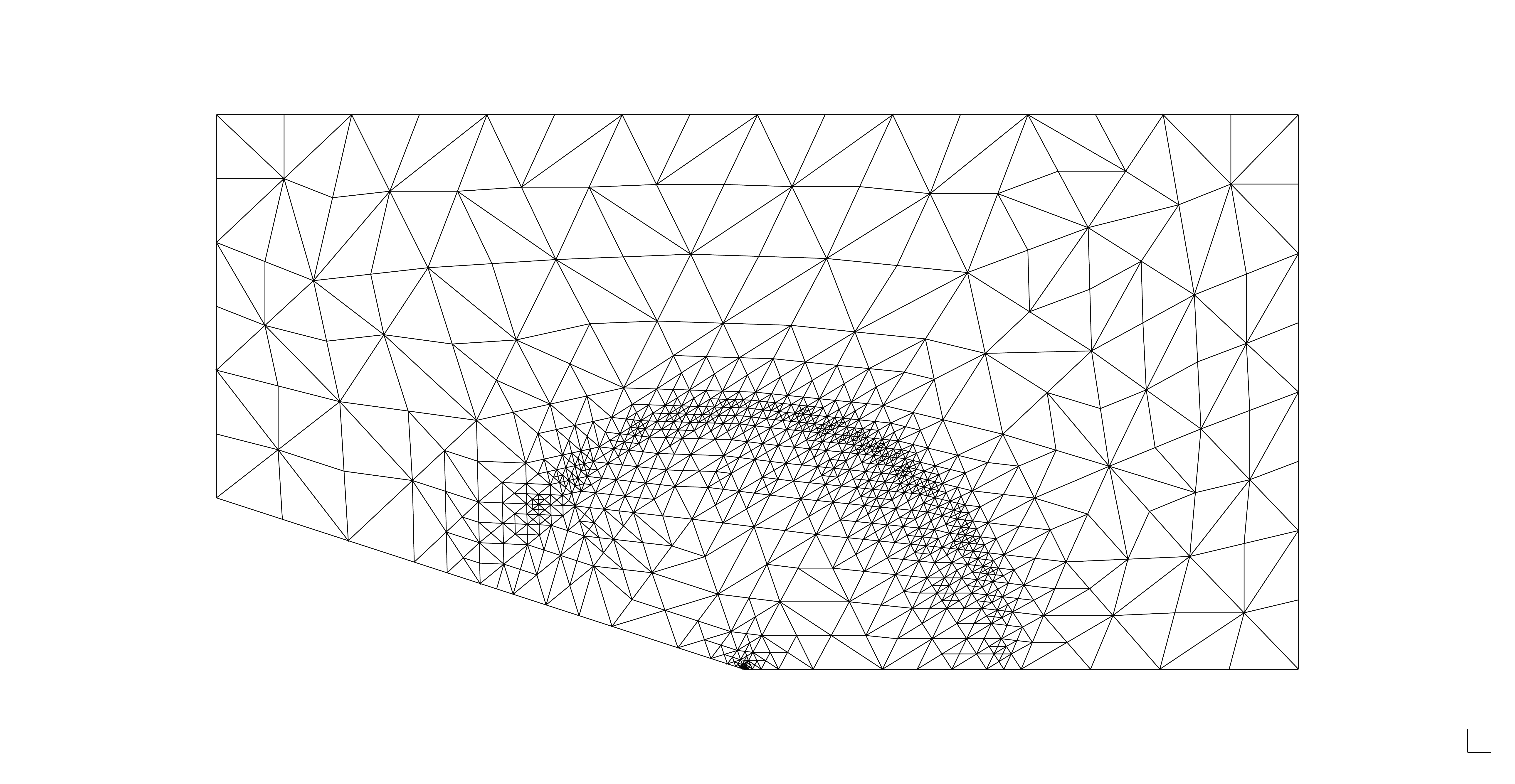}
\caption{Experiment of Section~\ref{sec:numexp}: initial mesh used for the adaptive computations (left) and sample mesh obtained after 14 steps of the adaptive method (right) using the method~\eqref{eq:At_def} with $s=1$, $p=3$, $\theta = 1/2$.}
\label{fig:meshes}
\end{center}
\end{figure}

\begin{figure}
\begin{center}
\begin{tabular}{c c}
\includegraphics{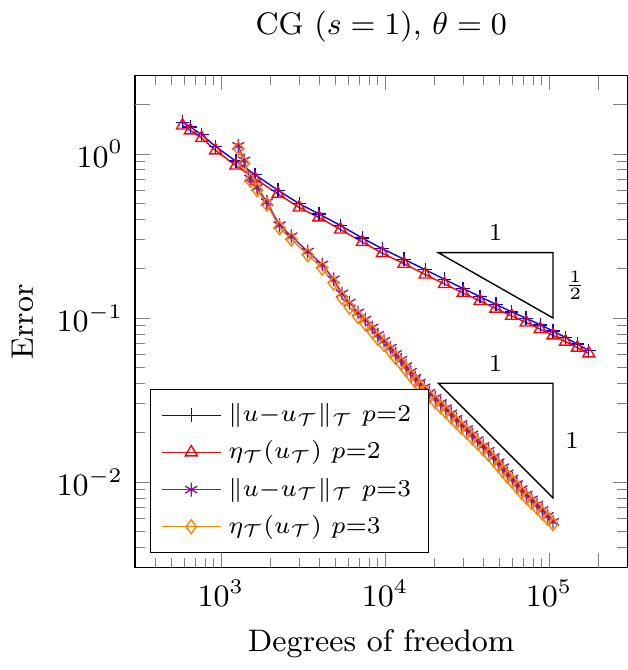} & \includegraphics{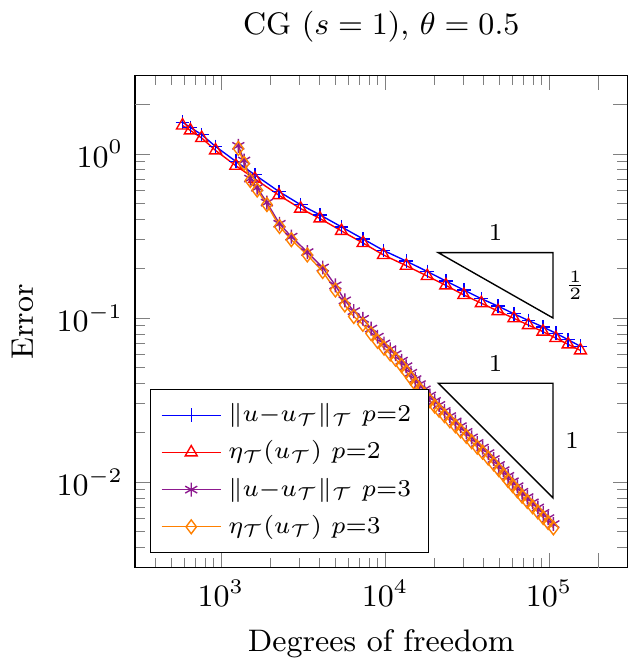} \\
\includegraphics{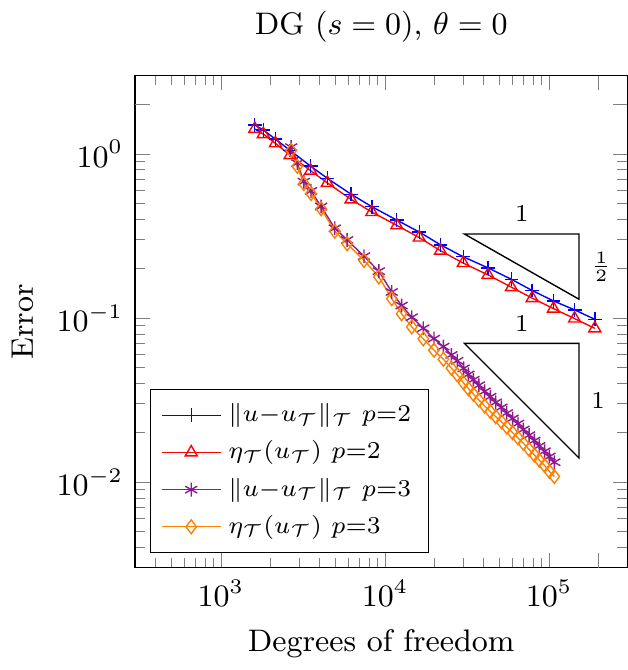} & \includegraphics{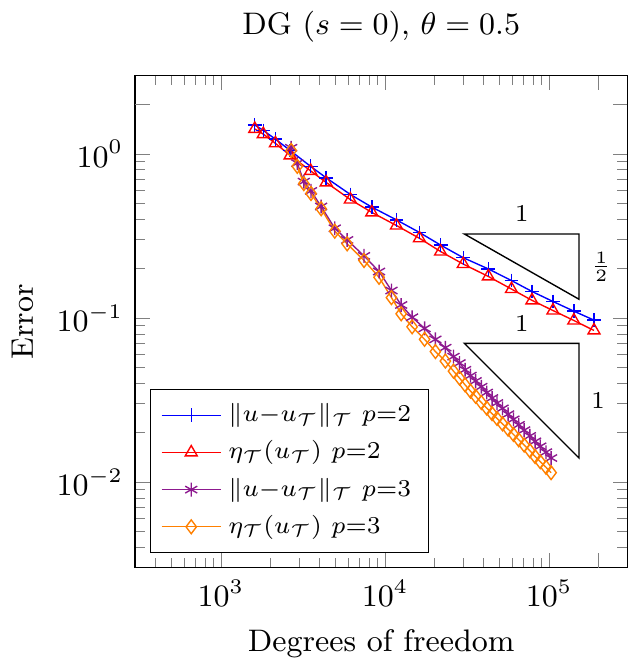}
\end{tabular}
\caption{Experiment of Section~\ref{sec:numexp}: convergence plots for a range of DG and $C^0$-IP methods of the form of \eqref{eq:At_def} on adaptively refined meshes. The convergence rates are optimal with respect to the number of degrees of freedom.}
\label{fig:4plots}
\end{center}
\end{figure}

In order to test the usefulness of the \emph{a posteriori} error estimators of Section~\ref{sec:abtract_apost}, we apply several of the methods of Section~\ref{sec:num_schemes} using adaptive mesh refinements guided by the residual error estimators~\eqref{eq:error_estimators}.
In particular, we apply a bulk-chasing (D\"{o}rfler) marking scheme with bulk-chasing parameter $1/4$.
See~\cite{KaweckiSmears20adapt} for the analysis of convergence of adaptive methods for these problems.
The coarse initial mesh used for the computations and a sample adaptively refined mesh obtained from the computations are detailed in Figure~\ref{fig:meshes}.
Our implementation is based on the software package NGSolve~\cite{NGSolve}.
Due to the nonconvexity of the Isaacs operator, the discrete nonlinear problems are solved using a Howard-type algorithm similar to~\cite[Algorithm Ho-4]{Bokanowski2009}.
This algorithm involves the solution of an outer sequence of discrete HJB problems that are each solved inexactly via inner iterations of a semismooth Newton method \cite[Section~8]{SS14}.
In our computations, we observed superlinear convergence of this algorithm with respect to both the outer and inner iterations, so that the total cost of solving the discrete Isaacs problem is comparable to the cost of solving a small number of discrete HJB equations.

Figure~\ref{fig:4plots} presents the computed errors and global error estimator values, c.f.\ \eqref{eq:error_estimators}, for a range of methods with varying parameters in the definition~\eqref{eq:At_def}. In each case, the choice $\chi=0$ is fixed, and we vary the parameters $\theta\in\{0,1/2\}$, $s\in\{0,1\}$, which corresponds to DG and $C^0$-IP methods; we also consider polynomial degrees $p\in\{2,3\}$.
In particular the case $s=0$ and $\theta=1/2$ leads to the method of~\cite{SS13,SS14} whereas $s=1$ and $\theta=0$ leads to the method of~\cite{NeilanWu19}, see Remark~\ref{rem:relation_literature}.
It is found that the adaptive algorithm leads to the optimal rates of convergence with respect to the number of degrees of freedom; indeed, for all of the methods, we obtain convergence rates of optimal order $N^{-1/2}$ for $p=2$ and of optimal order $N^{-1}$ for $p=3$, where $N$ denotes the number of degrees of freedom.
Figure~\ref{fig:4plots} further shows the efficiency of the estimators across all of the computations, with efficiency indices close to the ideal value of $1$.
It is also seen that the accuracy of the method is similar for the different values of $\theta\in\{0,1/2\}$, as may be expected from the quasi-optimality of all these methods.
We also note that we did not observe significant qualitative differences when varying the angle $\phi$ in further computations.

\bibliographystyle{spmpsci}
\bibliography{unifiedbib}

\end{document}